\def\myfnt{\ifx\protect\@typeset@protect\expandafter\footnote\else\expandafter\@gobble\fi}
\DeclareMathAlphabet{\mathcal}{LS1}{stixscr}{m}{n}
\newtheorem{theorem}{Theorem}[section]
\newtheorem{lemma}[theorem]{Lemma}
\newtheorem{cor}[theorem]{Corollary}
\newtheorem{prop}[theorem]{Proposition}
\theoremstyle{definition}
\newtheorem{deff}[theorem]{Definition}
\newtheorem{questionIntro}{Question}
\newtheorem{example}[theorem]{Example}
\theoremstyle{remark}
\newtheorem{rem}[theorem]{Remark}
\newcommand{\pairing}{outer holonomy}
\newcommand{\data}{leaf data}
\newcommand{\datas}{leaf data}
\newcommand{\Datas}{Leaf data}
\definecolor{blue}{rgb}{0,0,0}
\let\expandafter\oldproof\csname\string\proof\endcsname
\let\oldendproof\endproof
\renewenvironment{proof}[1][\proofname]{%
  \oldproof[\bfseries #1.]%
}{\oldendproof}
\newcommand{\bqa}{\begin{eqnarray}}
\newcommand\eqa {\end{eqnarray}}
\newcommand{\beq}{\begin{eqnarray}}
\newcommand{\beqn}{\begin{eqnarray}\nonumber}
\newcommand{\eeq}{\end{eqnarray}}
\newcommand{\be}{\begin{array}}
\newcommand{\ee}{\end{array}}
   \newcommand\vf\varphi
 \newcommand{\Aut}{\mathrm{Aut}}
 \newcommand{\cT}{{\mathcal T}}
 \newcommand{\cG}{{\mathcal{G}}}
 \newcommand{\cF}{{\mathcal{F}}}
\newcommand{\cU}{{\mathcal U}}
 \newcommand{\makecommand}[3]{%
    \foreach \i in #3 {%
        \expandafter\xdef\csname #1\i\endcsname{\noexpand#2{\unexpanded\expandafter{\i}}}%
    }%
}
 \newcommand{\latinalphabet}{A,a,B,b,C,c,d,D,E,e,F,f,G,g,H,h,I,i,J,j,K,k,L,l,M,m,N,n,O,o,P,p,Q,q,R,r,S,s,T,t,U,u,V,v,W,w,X,x,Y,y,Z,z} 
 \newcommand{\R}{{\mathbb R}}
   \def\t{\theta}
   \def\i{\imath}
\DeclareFontFamily{OT1}{pzc}{}
\DeclareFontShape{OT1}{pzc}{m}{it}{<-> s * [1.15] pzcmi7t}{}
\DeclareMathAlphabet{\mathpzc}{OT1}{pzc}{m}{it}
\newcommand\spiral{}
\def\spiral[#1][#2][#3:#4:#5]
\pgfmathsetmacro{\domain}{pi*#3/180+#4*2*pi}
\newcommand{\leaf}{L}
\newcommand{\vertical}{\pi \colon T \rightarrow \leaf}
\newcommand{\normal}{T}
\definecolor{SaemannsBlau}{rgb}{0.15,0.25,0.45}
\pgfplotsset{compat=1.18}
\renewcommand{\tocsection}[3]{%
  \indentlabel{\@ifnotempty{#2}{\bfseries\ignorespaces#1 #2\quad}}\bfseries#3}
\renewcommand{\tocsubsection}[3]{%
  \indentlabel{\@ifnotempty{#2}{\ignorespaces#1 #2\quad}}#3}
  \def\l@subsection{\@tocline{2}{0pt}{2.5pc}{5pc}{}}
\begin{document}

\tdplotsetmaincoords{70}{0}
\tikzset{declare function={torusx(\u,\v,\R,\r)=cos(\u)*(\R + \r*cos(\v)); 
torusy(\u,\v,\R,\r)=(\R + \r*cos(\v))*sin(\u);
torusz(\u,\v,\R,\r)=\r*sin(\v);
vcrit1(\u,\th)=atan(tan(\th)*sin(\u));
vcrit2(\u,\th)=180+atan(tan(\th)*sin(\u));
disc(\th,\R,\r)=((pow(\r,2)-pow(\R,2))*pow(cot(\th),2)+%
pow(\r,2)*(2+pow(tan(\th),2)))/pow(\R,2);
umax(\th,\R,\r)=ifthenelse(disc(\th,\R,\r)>0,asin(sqrt(abs(disc(\th,\R,\r)))),0);
}}

\bibliographystyle{amsplain}

\title{A classification of neighborhoods around leaves of a singular foliation}

\author[Simon-Raphael Fischer]{Simon-Raphael Fischer}
\address{Simon-Raphael Fischer: National Center for Theoretical Sciences (\begin{CJK*}{UTF8}{bkai}國家理論科學研究中心\end{CJK*}), Mathematics Division, National Taiwan University 
\newline
No.\ 1, Sec.\ 4, Roosevelt Rd., Taipei City 106, Room 407, Cosmology Building, Taiwan
\newline
\begin{CJK*}{UTF8}{bkai}臺北市羅斯福路四段1號 (國立臺灣大學次震宇宙館407室)\end{CJK*}
\newline
\textit{ORCiD}: \href{https://orcid.org/0000-0002-5859-2825}{0000-0002-5859-2825}}
\email{\href{mailto:sfischer@ncts.tw}{sfischer@ncts.tw}}

\author[Camille Laurent-Gengoux]{Camille~Laurent-Gengoux}
\address{Camille~Laurent-Gengoux: Institut Elie Cartan de Lorraine (IECL), UMR 7502 --  3 rue Augustin Fresnel, 57000 Technop\^ole Metz, France
\newline
\textit{ORCiD}: \href{https://orcid.org/0000-0002-9363-869X}{0000-0002-9363-869X}}
\email{\href{mailto:camille.laurent-gengoux@univ-lorraine.fr}{camille.laurent-gengoux@univ-lorraine.fr}}


\begin{abstract} 
We classify singular foliations admitting a given leaf and a given transverse singular foliation.
\end{abstract}

\keywords{Singular foliation, Lie group bundle, Yang-Mills connection}

\maketitle

\tableofcontents

\section*{Introduction}

Although much less studied than regular foliations, singular foliations are extremely common in differential geometry. After an intense debate in the 1970s about what their very definition should be \cite{Lavau18}, there is an almost-consensus to define a singular foliation as a locally finitely generated sub-module of the module of vector fields which is stable under Lie bracket, following Hermann \cite{Hermann}, Stefan \cite{stefan1974accessible, Stepan1,Stepan2}, and Sussmann \cite{Sussmann1, Sussmann2}.
We will use this widely accepted definition, but we think that most of the work which is done here should hold true with minor adaptations using the alternative definition of singular foliations as in \cite{Miyamoto}.
Non-commutative geometry, presently the most active area of research that makes use of such singular foliations, uses examples associated to differential operators  (see, \textit{e.g.}\ \cite{AMY,AS,Debord}), sometimes through the adiabatic Lie algebroid constructions, (see \textit{e.g.}\ \cite{zbMATH07217274,VictorNistor}). Singular foliations are also widely studied in complex geometry, see \textit{e.g.}\ \cite{zbMATH07124409} for straightening theorems in this context. They also appear in Hamiltonian systems since  Poisson structures, and more generally Lie algebroids, induce singular foliations \cite{crainic2021lectures,LPV,zbMATH06745657}. Last, they appear in mathematical physics \cite{LLS,NahariStrobl} in relation with the notion of dg-manifold. 

Singular foliations deserve their name, because they induce a partition of the manifold by immersed submanifolds called \emph{leaves} \cite{Hermann,Stepan1,Sussmann1}. 
We intend here to complete a  classification of all singular foliations in a neighborhood of a leaf $L$. This is a continuation of two articles of the second author with Leonid Ryvkin. In \cite{LGR}, an equivalent of the monodromy for singular leaves was constructed, while \cite{Ryvkin2} addressed the formal linearization problem, \textit{i.e.}\  Levi-Malcev theorems. 
Also, a classification in the codimension one case was recently completed by Francis in \cite{PhD_Francis,francis2023singular}, and a classification for vector fields tangent up to order $k$ of a given submanifold was recently completed by Bischoff, del Pino, and Witte in \cite{BDW}. We recover these classifications, at least their formal equivalent.
\begin{blue}
There is probably also a relation with \cite{macdonald2021hierarchies, macdonald2021holonomy}: we construct (albeit formally) the diffeologies that appear in these articles, but we do not need to insist on the holonomy groupoid itself.
\end{blue}

To understand what a classification in a neighborhood of a leaf can achieve, recall that a basic result for singular foliations states that "if one travels along a leaf, one always sees the same landscape through the window".
More precisely, there is a splitting theorem: For a leaf of codimension $d$, there is a well-defined (germ or $\infty $-jet of) singular foliation $\mathcal T_0 $ on a neighborhood of $0$ in $\mathbb R^{d} $ such that, near every point of the leaf, $\mathcal F $ is simply the direct product of this leaf with that transverse singular foliation $\mathcal T_0 $. This $\mathcal T_0 $ is constant along the leaf, and does not 
 depend on any choice. 
We call the (germ or $\infty $-jet of) singular foliation $\mathcal T_0 $ the \emph{transverse model}.
We address the following question:

\begin{questionIntro}\label{EssentialQuestions}
Given a manifold $L$ and a singular foliation $ \mathcal T_0$ near $0$ on $ \mathbb R^d $, how many singular foliations admit $L$ as leaf and $ \mathcal T_0$ as a transverse model? How to classify them?
\end{questionIntro}

The question "how many?" may seem naïve and one may think that the answer is "quite a lot", so that the best one could hope would be an answer of the type "as much as we have elements in some-huge-set". But there are strong indications that this set is not that "huge". For $L$ an open ball, it was proven in \cite{AS} that a singular foliation is uniquely determined by $L$ and $\cT_0$, so that the answer to Question \ref{EssentialQuestions} is "one and only one" (which then has to be the trivial product). We will extend this result to any contractible $L$. Moreover, the second author and Leonid Ryvkin have proven in \cite{Ryvkin2} that the answer is "one and only one" again provided that $L$ is simply connected and $\mathcal T_0 $ is made of vector fields that vanish at least quadratically at $0$. Said otherwise, despite the huge variety and the possibly rich geometry of the transverse models $\mathcal T_0 $, there are cases where there are much less singular foliations near a given leaf than naively thought. 

Also, there are cases for which the answer to Question \ref{EssentialQuestions} is known. If the chosen transverse model $ \mathcal T_0$ is the smallest possible one, \emph{i.e.} the trivial singular foliation of vector fields all equal to $0$, then we are indeed looking for regular foliations admitting $L$ as a leaf. It is known that there are as many such regular foliations on a formal neighborhood of $L$ as there are conjugacy classes of group morphisms
 $$  \pi_1(L) \longrightarrow \mathrm{Diff}(\mathbb R^d, 0)  $$
 where $ \mathrm{Diff}(\mathbb R^d, 0) $ stands for the group of germs of diffeomorphisms of $\mathbb R^d $ mapping $0$ to itself \cite{GroupoidBasedPrincipalBundles}.   If the chosen transverse model $ \mathcal T_0$ is the largest possible one, \emph{i.e.}  the singular foliation of all vector fields on $ \mathbb R^d$ vanishing at $0$, then we are indeed classifying rank $d$ vector bundles over $L$, \textit{i.e.}\ we are classifying principal $\mathrm{GL}_d(\mathbb R) $-bundle over $L$. As we will see in Theorem \ref{thm:classificationSimplified}, our classification has these two ingredients above: a group morphism from $\pi_1(L) $ to some group of symmetries, and a finite dimensional principal bundle.

\begin{blue}For leaves of dimension one, \textit{i.e.}\ $ \mathbb{S}^1$, it is also relatively elementary  to see that a local classification is given by conjugacy classes of outer symmetries of the transverse model, see Ex.\ \ref{ex:S1asleaf}.\end{blue} For the sphere 
$ \mathbb{S}^2$, the problem of classification is not that trivial. An obvious sign that a general theory was missing is that for  the torus $ \mathbb{T}^2$, the question was open.  Consider for instance the case where the transverse model is given by one of the following two singular foliations on $\mathbb R^2 $:

\begin{center}
\begin{tabular}{|c|c|c|}
\hline
Name &Concentric circles & Spirals \\
\hline
Generator  & $x \partial_y - y \partial_x $  & $x \partial_y - y \partial_x + (x^2+y^2)(x \partial_x + y \partial_y) $ 
\\ \hline 
\begin{tabular}{c}Picture\\ of \\ the \\ leaves  \\ \\ \\   \end{tabular} &  \begin{tabular}{c} \\
\begin{tikzpicture}
\filldraw[color=red!60, fill=red!0, very thick](0,0) circle (1.5);
\filldraw[color=green!60, fill=red!0, very thick](0,0) circle (1.25);
\filldraw[color=blue!60, fill=red!0, very thick](0,0) circle (1);
\filldraw[color=green!60, fill=red!0, very thick](0,0) circle (0.75);
\filldraw[color=red!60, fill=red!0, very thick](0,0) circle (0.5);
\filldraw[color=black!100, fill=black!100, very thick](0,0) circle (0.15);
\end{tikzpicture}
\end{tabular}
 &
 \begin{tabular}{c} \\
 \begin{tikzpicture}
 \node[rotate=90] at (0.3,0.2) {
    \begin{tikzpicture}[node distance = 0cm, auto](0,0)\spiral[red][0,0][0:2:2] ;
    \end{tikzpicture}};
 \node[rotate=180] at (-0.1,0.2) {
    \begin{tikzpicture}[node distance = 0cm, auto](0,0)\spiral[green][0,0][0:2:2] ;
    \end{tikzpicture}};
    
\spiral[blue][0,0][0:2:2];
  \filldraw[color=black!100, fill=black!100, very thick](0,0) circle (0.15);
\end{tikzpicture}
 \end{tabular}
\\ \hline

\end{tabular}
\end{center}

Here are natural questions:
\begin{enumerate}
\item[Q2] {\textbf{a}:} Can the concentric circle singular foliation be the transverse model of a singular foliation on $\rmT \mathbb{S}^2$ admitting $ \mathbb{S}^2$ as a leaf?

\noindent {\textbf{b}:} What about the spiral case?
\item[Q3] {\textbf{a}:} Given two symmetries $ \phi,\psi$ of these singular foliations such that $\phi \circ \psi (x) \sim \psi \circ \phi (x) $, where $ \sim $ means "to be in the same leaf", can we construct a singular foliation admitting the $2$-torus $ \mathbb{T}^2$ as a leaf and the concentric circles as transverse models?

\noindent {\textbf{b}:} What about spirals? 

\noindent {\textbf{c}:} If yes, is it unique?
\end{enumerate}
The answer to \textbf{Q2a} is "yes", and it is relatively easy to find an example: {\color{blue} Vector fields $X$ on $\rmT \mathbb{S}^2$ that are zero when applied to the square of the euclidean norm. The answer to {\textbf{Q2b}} is "no": See Example \ref{ex:IntroQuestionsAbout2SphereAsLeaf} It could be seen as a consequence of the hairy ball theorem. However, we will give a systematic description that answers all such questions at once.}
For questions, {\textbf{Q3a}}-{\textbf{Q3b}}, the answer would be "yes" if $ \phi$ and $\psi$ commuted: One could then make a suspension construction, using some $\mathbb Z^ 2 $-action. We will see in Examples \ref{ex:torustriple}-\ref{ex:IntroQuestionsAboutTorusAsLeaf} that even if $ \phi$ and $ \psi$ do not commute and no $\mathbb Z^2 $-action exists, the answers to {\textbf{Q3a}}-{\textbf{Q3b}} are "yes, it exists" but the meaning of the relation $\sim$ has to be made more precise. Again the heavy machinery of our classification is required there. The answer of {\textbf{Q3c}} is that it will only be unique in the case of spirals. In the case of circles, there are as many of them as there are elements in $\mathbb Z $. In most cases above, the complete answer, as we will see, goes through our classification and therefore goes through some infinite dimensional geometry.

In fact, we only give a classification and therefore answer these questions at formal level. The manifold $ L$ will be for us an ordinary smooth manifold, but the transverse model $\mathcal T_0 $ will be only a \textit{formal} singular foliation, {\color{blue} i.e.\ the infinite jet of a singular foliation - a precise description will be given below.  This is a debatable choice, so let us justify it. This means that we only classify infinite jets of a smooth singular foliation along a given smooth embedded submanifold. This is of course a first step, exactly as Conn's linearization theorem \cite{conn1984normal, conn1985normal}, or its extensions for Lie algebroids \cite{Zung, crainic2011geometric}. All these results start by a formal linearization.}
 Also we will see that using infinite jets allows one to reduce some infinite dimensional principal bundles to ordinary finite-dimensional ones, which would not be easy using, e.g., germs.

 Of course, we believe that under some real-analyticity  condition and/or compactness conditions, most results can be enlarged, but this will not be discussed here. 

\begin{blue}
Also, we relate this theorems with the formalism of \textit{Yang-Mills connections} (=YMCs). In fact, in the first version of the article, those played a crucial role in the proof  of the classification. 
In the present version, following the suggestions of the referees, we decided to reduce the role of YMCs, and to delay the discussion about it to the Appendix, where we generalize our classification to YMCs with a centerless group. Unlike the first version, where YMCs were used to prove the classification, it does not appear explicitly in the proof anymore, although it may help to have them in mind, so let us say a few words about them, following the presentation of \cite{SRFCYM}, and to its history \cite{OriginofCYMH, mayer2009lie, CurvedYMH, My1stpaper, MyThesis} (with similar problems appearing in \cite{samann2020towards, Kim:2019owc, rist2022explicit, MR2157566}). \end{blue} 
In one sentence, given a fibre bundle $\normal \to L $ on which a Lie group bundle (LGB) $\mathcal G \to L $ acts on the left, YMCs are Ehresmann connections with a certain compatibility w.r.t.\ the $\cG$-action and 
whose parallel transports\footnote{Let us however point out that we are considering a particular class of YMC that appeared in \cite{SRFCYM}, which we call here \textit{multiplicative} YMCs, because we are in the particular case where the group action is faithful. As we will see in a future work, what we do for singular foliations extends to several geometrical situations (\textit{e.g.}\ dg-manifolds or Poisson structures), but for singular foliations YMCs will be automatically multiplicative. We do not have to consider curved Yang-Mills gauge theories as in \cite{SRFCYM}: We do not have a curving $3$-form valued in some center (\cite{My1stpaper, MyThesis}), nor an adjustment term.}
over any contractible loop are given by the action of some element in the identity component of $\mathcal G $.
Several examples are listed in \cite{SRFCYM}.

What is the relation between YMCs and singular foliations? 
Geometrically, the existence of a YMC is linked to the following phenomenon (to our knowledge, first noticed by \cite{Dazord}). 
Consider a path $\gamma $ on the leaf $\leaf $ of a singular foliation $\mathcal F $ with transverse singular foliation $\mathcal T_0 $. There is a way to define parallel transportation along this path, through a particular type of Ehresmann connections called $\mathcal F $-connections in \cite{LGR}, such that integral curves are always in the same leaf of $ \mathcal F$. If $\gamma $ is a loop, it means that each leaf of $ \mathcal T_0$ is mapped to a leaf of $\mathcal T_0 $. But it may not be the same leaf. However,  if the loop is contractible, it has to be the same leaf. 
That is, it is a Yang-Mills connection with respect to a subgroup bundle of symmetries. Of course, since we work in the formal case, such an interpretation should be considered as an heuristic idea only. But it can be made precise:
\begin{blue}
We will show (in the body of the article) that any formal $\cF$-connection induces a Yang-Mills connection, and that choosing a different formal $\cF$-connection amounts to what is called field redefinitions of the corresponding Yang-Mills connection (\cite{My1stpaper, MyThesis, SRFCYM}).
In the Appendix, we will classify centerless Yang-Mills bundles, a classification very similar to the one of singular foliations along a leaf.
\end{blue}

Although the diffeologies involved in the proofs are far from easy to deal with, our final answer to Question \ref{EssentialQuestions} uses only very classical differential geometry.
We show that any singular foliation admitting a given leaf $L$ and a given transverse model $\mathcal T $ induces a triple as follows, which we also call the \emph{\data{}}: 
\begin{enumerate}
\item a Galois cover $L' $ of $L$ which has as many elements as there will be outer symmetries of $\mathcal T_0 $ coming through "wandering along the leaf and coming back",
\item the group extension $ G \hookrightarrow H \hookrightarrow \pi_1(L',L)  $ of all such symmetries obtained through "wandering along the leaf and coming back",
\item some $H$-principal bundle over $L$ on the top of $L'$.
\end{enumerate}
We show that, at the formal level, such triples entirely classify singular foliations admitting $ L$ as a leaf (see Thm.\ \ref{thm:classification}). The precise meaning of "formal level" will be given in the text.
 This result can be considerably simplified, by showing that, at formal level, \datas{} can even be replaced by an even simpler description (see Theorem \ref{thm:classificationSimplified}):
\begin{enumerate}
\item a group morphism $\phi \colon \pi_1(L) \longrightarrow {\mathrm{Out}}(\mathcal T_0)  $ (called \pairing{}), with ${\mathrm{Out}}(\mathcal T_0)$ the group of outer symmetries of $ \mathcal T_0$, and
\item some principal $R$-bundle extending the Galois cover associated to the kernel of $ \phi$, with $R$ some finite dimensional Lie group to be described in the text.
\end{enumerate}
 Theorems  \ref{thm:classification}  and \ref{thm:classificationSimplified} are, in our opinion, the two most important results of the present article.

The result obtained by the second author and Ryvkin \cite{Ryvkin2} mentioned before is an immediate consequence. If $L$ is simply connected, there is no non-trivial Galois cover and the two first items in the \data{} disappear. If the transverse model singular foliation $ \mathcal T_0$ is made of vector fields that vanish at least quadratically, then $H={\mathrm{Inner}}(\mathcal T_0)$ is nilpotent-like and its group of inner symmetries is contractible, so that there is no non-trivial principal bundle.
As corollaries of the main construction, we show that:
\begin{enumerate}
    \item If the leaf $L$ is simply connected, then all possible formal singular foliations are simply classified by some finite-dimensional principal bundle (see Cor. \ref{coro:simply-connected-case}).
    \item If the transverse singular foliation is made of vector fields that vanish at least quadratically at $0$, then we show that formal singular foliations are the same as the data consisting of a Galois bundles over $L$ together with a group extensions of a certain type. Alternatively, it means that they are entirely given by the \pairing{} (see Cor.\  \ref{cor:transvQuadra}). 
    \item If a flat $\cF$-connection exists, then the \pairing{} can be lifted to a group morphism $\pi_1(L) \to \mathrm{Sym}(\cT_0)$, where $\mathrm{Sym}(\cT_0)$ is the group of symmetries of $\cT_0$. In this case, one can give an even more explicit description of the \data{} (see Prop.\ \ref{prop:FlatFoliationClassification}).
    \item As an application, we describe all possible formal singular foliations admitting a $2$-dimensional torus as a leaf (see Subsection \ref{subsec:Torus}) - a surprisingly difficult problem.
    \item We also give systematic answers to the miscellaneous questions in the introduction about concentric circles and spirals.
\end{enumerate}

\textbf{Conventions:}
\leavevmode\newline
$\bullet$
Lie groupoids are denoted by $\Gamma \rightrightarrows M $, with $M $ the set of units. Their source and target are denoted by $s$ and $t$, respectively. We use the short hand notation $\Gamma_m$, $\Gamma^n$, $\Gamma_m^{n} $ for $s^{-1}(m)$, $t^{-1}(n)$, $s^{-1}(m) \cap t^{-1}(n)$, respectively.

$\bullet$ A \emph{Galois cover} $ \tilde{L}$  over a connected manifold $\leaf $ is a $G$-principal bundle which satisfies the two following properties: \emph{(1)} it is connected, and \emph{(2)} the projection $\tilde{L} \to L $ is a local diffeomorphism. It is easily proved that this implies that $ G \simeq \pi_1(L)/K$ with $K$ a normal subgroup of the fundamental group $\pi_1(\leaf) $ and that $\tilde{L}= L^u/K $, with $L^u $ the universal cover of $L $. As a consequence, \emph{there is a one-to-one correspondence between Galois covers of $L$ and normal subgroups of $\pi_1(L) $}. It is convenient to denote the group $G=\pi_1(L)/K$ as $\pi_1(\tilde{L},L) $ and to call it the \emph{Galois group} of  $\tilde{L} $.  Notice that for any flat complete Ehresmann connection $\mathbb H $ on a bundle $X \to L $, the leaves of $\mathbb H$ (seen as a regular foliation on $X$) are Galois bundles over $L$. 

$ \bullet$ For $ K \subset H$ a normal Lie subgroup of a Lie group, and $R \to L $  an $ H/K$-principal bundle over $L$, we say that an $H$-principal bundle $P \to L $ such that $ P/K \cong R$ as principal bundles \begin{blue} (in particular, the natural map $P \to P/K$ is a morphism of principal bundles)\end{blue} is an \emph{extension of $R$ to an $H$-principal bundle} \cite{KirillExtension}.\footnote{Notice that a given $H/K$-principal bundle $R$ may have no extension to a $H$-principal bundle.
For instance, when $K $ is the center of $H $, there are obstructions in the \v{C}ech cohomology group ${H}^2(L,Z) $: This point is at the root of the theory of Abelian gerbes \cite{zbMATH00691244,zbMATH05306115}.}

\section{Formal singular foliations along a leaf}

\subsection{Formal neighborhoods and formal singular foliations}  

In this section, we introduce the notion of formal singular foliation along a leaf $\leaf$, following \cite{Ryvkin2}. 
The notion is justified by the fact that, as we will show, that any singular foliation on a manifold $M$ admitting $\leaf $  as a leaf induces one, namely its infinite jet along the leaf $\leaf$. Last, we equip (inner,  outer) symmetries of a formal singular foliation along $ \leaf$ with relevant diffeologies.

Let $\mathcal A $ be a commutative algebra and $\mathcal A[[x_1, \dots, x_d]] $ be the algebra of \emph{formal power series in $d$ variables}, \textit{i.e.}\ the algebra of formal sums
 $$   F = \sum_{i_1, \dots, i_d \geq 0} F_{i_1, \dots, i_d} x_1^{i_1} \cdots x_d^{i_d}   $$ 
with $ F_{i_1, \dots, i_d} \in \mathcal A $. This algebra is filtered by the decreasing sequence $ \left(\mathcal A_{\geq n}\right)_{n \in \mathbb N}  $ with $\mathcal A_{\geq n} $ being the ideal of formal functions of valuation $ \geq n$, \textit{i.e.}\ the ideal generated by monomials of 
degree $ \geq n$. It is a complete algebra with respect to the topology induced by this filtration.

We call a \emph{formal neighborhood  of a manifold $\leaf$} a sheaf of filtered commutative algebras 
$$ \mathcal U \mapsto C^{\mathrm{formal}}(\mathcal U) $$ over $ \leaf$ that satisfies the property that each point $\ell \in \leaf $ admits a neighborhood $\mathcal U \subset \leaf $ such that, as filtered algebras,
\begin{align}\label{CformalLocal}
C^{\mathrm{formal}}(\mathcal U) \simeq C^\infty(\mathcal U) [[x_1, \dots, x_d]]. 
\end{align}
Sections of this sheaf shall be called \emph{formal functions}. This sheaf is by assumption a sheaf of filtered algebras,  with respect to a decreasing filtration
given by the valuation of the formal power series. In other words, the $k$-th ideal of this filtration is made of formal functions which in any neighborhood $  \mathcal U$ as in \eqref{CformalLocal} read: 
$$ \sum_{i_1 + \dots +i_d \geq k} f_{i_1, \dots, i_d} x_1^{i_1} \dots x_d^{i_d}$$
with $f_{i_1, \dots, i_d}$ being smooth functions on~$\mathcal U$. We denote by $C^{\mathrm{formal}}_{\geq k} $  this ideal and call it the \emph{ideal of formal functions vanishing along $\leaf $ at order at least $k$}. In particular,  $\I_L := C^{\mathrm{formal}}_{\geq 1} $ is the kernel of the natural projection $C^{\mathrm{formal}}(\leaf) \longrightarrow C^\infty(\leaf)  $ and will be referred to as the \emph{ideal of formal functions vanishing along $\leaf $}.

\begin{blue}
\begin{example}
\label{ex:obvious}
\normalfont
For every manifold $\mathcal U$ (later on, it will be an open subset of the manifold $ \leaf$, hence the notation), and every integer $d$, the sheaf of formal functions of the form $
\sum_{i_1 + \dots +i_d \geq 0} f_{i_1, \dots, i_d} x_1^{i_1} \dots x_d^{i_d}$
with $f_{i_1, \dots, i_d}$ being smooth functions on~$\mathcal U$ is a formal neighborhood of $ \mathcal U$. We call it the \emph{trivial formal neighborhood}.
\end{example}
\end{blue}
 
\begin{example}
\label{decomp:step1}
\normalfont
For any embedded submanifold $\leaf$ of a manifold $ M$, the algebra of infinite jets of smooth functions along $\leaf $
(\textit{i.e.}\ the adic completion of $C^\infty(M) $ with respect to the ideal of functions vanishing along $\leaf $)
is the space of global sections of a formal neighborhood of $\leaf $.
\end{example}

Provided that $\leaf $ is connected, the integer $d$ is constant along $\leaf $, and the quotient space $ C^{\mathrm{formal}}_{\geq 1}/C^{\mathrm{formal}}_{\geq 2} $ is (by Serre-Swann theorem) the sheaf of sections of a vector bundle of rank $d$. We denote its dual bundle by $ \vertical$ and call it the \emph{normal bundle}.

\begin{example}
\label{decomp:step2}
\normalfont
For the formal neighborhood of Example \ref{decomp:step1}, the normal bundle $\normal$ is the usual normal bundle $\mathrm{T}M|_{\leaf} / \mathrm{T} \leaf $.
\begin{blue}For Example \ref{ex:obvious} it is the trivial vector bundle of rank $d$.\end{blue}
\end{example}

Example \ref{decomp:step1} is in fact "generic", in view of the following classical lemma:

\begin{lemma}
\label{lem:normalstructure}
\cite{Iglesias-Zemmour}
Any formal neighborhood of a connected manifold $\leaf $  is isomorphic (as a sheaf of filtered algebra) to infinite jets along $\leaf $  of smooth functions on the normal bundle $\vertical$.
\end{lemma}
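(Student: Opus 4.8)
The plan is to exploit that both algebras in the statement are complete with respect to their defining filtrations and share the same associated graded, so that it suffices to produce a single filtered algebra morphism between them inducing the identity in filtration-degrees $0$ and $1$. Write $A := C^\infty(\leaf)$, let $\Gamma(\normal^*)$ denote the sections of the conormal bundle (so that $\Gamma(\normal^*) = C^{\mathrm{formal}}_{\geq 1}/C^{\mathrm{formal}}_{\geq 2}$ by the discussion preceding Serre--Swann), and let $\widehat{J}$ be the algebra of infinite jets along $\leaf$ of smooth functions on the normal bundle $\normal$. Locally the filtration of $C^{\mathrm{formal}}$ reads $C^\infty(\mathcal U)[[x_1,\dots,x_d]]$, whose associated graded is the polynomial algebra $C^\infty(\mathcal U)[x_1,\dots,x_d]$; these local descriptions glue to an isomorphism of graded $A$-algebras ${\mathrm{gr}}\,C^{\mathrm{formal}} \cong {\mathrm{Sym}}_A\Gamma(\normal^*)$, the multiplication maps ${\mathrm{Sym}}^k\Gamma(\normal^*) \to {\mathrm{gr}}^k C^{\mathrm{formal}}$ being local isomorphisms. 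The same computation applied to $\widehat J$ (Example \ref{decomp:step2}, in the case of the zero section of $\normal$, identifies its normal bundle with $\normal$) gives ${\mathrm{gr}}\,\widehat J \cong {\mathrm{Sym}}_A\Gamma(\normal^*)$ as well, and in fact $\widehat J \cong \widehat{{\mathrm{Sym}}}_A\Gamma(\normal^*)$, the completed symmetric algebra, since functions on a vector bundle completed along the zero section are precisely completed symmetric powers of the dual.

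Using this last identification, I would construct the comparison morphism $\Phi \colon \widehat J \to C^{\mathrm{formal}}(\leaf)$ via the universal property of the symmetric algebra. Concretely, such a filtered $A$-algebra morphism is determined by two pieces of data: \emph{(i)} an algebra section $j \colon A \to C^{\mathrm{formal}}(\leaf)$ of the projection $C^{\mathrm{formal}}(\leaf) \to A = C^{\mathrm{formal}}/C^{\mathrm{formal}}_{\geq 1}$, which turns $C^{\mathrm{formal}}(\leaf)$ into an $A$-algebra, and \emph{(ii)} an $A$-linear map $\tau \colon \Gamma(\normal^*) \to C^{\mathrm{formal}}_{\geq 1}$ splitting the projection $C^{\mathrm{formal}}_{\geq 1} \to C^{\mathrm{formal}}_{\geq 1}/C^{\mathrm{formal}}_{\geq 2} = \Gamma(\normal^*)$. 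Because $\tau$ takes values in $C^{\mathrm{formal}}_{\geq 1}$ and the filtration is complete and Hausdorff, the resulting map on symmetric powers converges and extends to the completion, giving a well-defined filtered algebra map $\Phi$ that is, by construction, the identity on ${\mathrm{gr}}^0$ and ${\mathrm{gr}}^1$.

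The splitting $\tau$ is cheap: $\Gamma(\normal^*)$ is a finitely generated projective $A$-module, so the surjection onto it from $C^{\mathrm{formal}}_{\geq 1}$ splits $A$-linearly. The genuine obstacle is the \emph{algebra} section $j$ — a formal tubular-neighborhood datum — since algebra sections, unlike module sections, cannot be glued directly by a partition of unity. I would build $j$ order by order: given a global algebra section $j^{(k)} \colon A \to C^{\mathrm{formal}}/C^{\mathrm{formal}}_{\geq k+1}$, one lifts it across the square-zero extension with kernel ${\mathrm{gr}}^{k+1} = {\mathrm{Sym}}^{k+1}\Gamma(\normal^*)$. Local algebra lifts exist from the local models, and on overlaps two lifts of $j^{(k)}$ differ by an $A$-derivation valued in ${\mathrm{gr}}^{k+1}$; these differences form a \v{C}ech $1$-cocycle in the sheaf $\Der\bigl(A,{\mathrm{gr}}^{k+1}\bigr) = \Gamma\bigl(\mathrm{T}\leaf \otimes {\mathrm{Sym}}^{k+1}\normal^*\bigr)$. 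As this is the sheaf of sections of a vector bundle, it is fine and has vanishing first \v{C}ech cohomology, so the cocycle is a coboundary and the local lifts can be corrected to a global lift $j^{(k+1)}$. Passing to the inverse limit over $k$ yields the desired global algebra section $j$.

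Finally, I would conclude by an associated-graded argument. Since $\widehat J \cong \widehat{{\mathrm{Sym}}}_A\Gamma(\normal^*)$ is generated in degrees $0$ and $1$ and $\Phi$ is the identity there, the induced map ${\mathrm{gr}}\,\Phi = {\mathrm{Sym}}_A(\mathrm{id})$ is the canonical isomorphism ${\mathrm{Sym}}_A\Gamma(\normal^*) \xrightarrow{\;\sim\;} {\mathrm{gr}}\,C^{\mathrm{formal}}$. A filtered morphism of complete, Hausdorff filtered algebras which is an isomorphism on associated graded is itself an isomorphism (by successive approximation on the quotients $C^{\mathrm{formal}}/C^{\mathrm{formal}}_{\geq k}$), so $\Phi$ is the required filtered algebra isomorphism. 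The part I expect to be most delicate is the existence of the global algebra section $j$, i.e.\ the vanishing of the order-by-order gluing obstructions; everything else is formal manipulation with symmetric algebras and completeness.
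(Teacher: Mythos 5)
The paper itself offers no proof of Lemma \ref{lem:normalstructure}: it is quoted with a citation to \cite{Iglesias-Zemmour}, so there is no in-paper argument to match yours against. Your architecture is the standard proof of such formal tubular neighborhood theorems, and it is essentially sound: the computation ${\mathrm{gr}}\,C^{\mathrm{formal}} \cong {\mathrm{Sym}}_A\Gamma(\normal^*)$ (checked on the local models, where the multiplication maps are isomorphisms), the reduction of the morphism to the data $(j,\tau)$, the existence of $\tau$ by projectivity of $\Gamma(\normal^*)$ over $A=C^\infty(\leaf)$, the fact that two lifts across a square-zero extension differ by a derivation, the identification $\Der(A,{\mathrm{gr}}^{k+1}) \cong \Gamma(\rmT\leaf \otimes {\mathrm{Sym}}^{k+1}\normal^*)$ and the vanishing of its \v{C}ech $H^1$ (fine sheaf), and the final "isomorphism on ${\mathrm{gr}}$ of complete Hausdorff filtered algebras implies isomorphism" step are all correct.

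The step you dismiss too quickly is precisely the one that is not formal: \emph{"local algebra lifts exist from the local models."} In the local model this amounts to lifting a given $\mathbb{R}$-algebra section $j^{(k)} \colon C^\infty(\mathcal U) \to C^\infty(\mathcal U)[[x_1,\dots,x_d]]/\mathfrak{m}^{k+1}$ across the square-zero quotient, and nothing purely algebraic guarantees this: $C^\infty(\mathcal U)$ is not finitely generated as an $\mathbb{R}$-algebra, and no abstract formal smoothness of $C^\infty(\mathcal U)$ over $\mathbb{R}$ against arbitrary square-zero extensions is available (the Kähler/André--Quillen theory of $C^\infty$-rings viewed as bare commutative rings is notoriously ill-behaved). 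What makes the assertion true is the special nature of the target: it is of Weil type, $C^\infty(\mathcal U)\otimes W$ with $W=\mathbb{R}[[x]]/\mathfrak{m}^{k+1}$ a Weil algebra, and every $\mathbb{R}$-algebra homomorphism from $C^\infty(\mathcal U)$ into such an algebra automatically annihilates flat functions and is therefore of Taylor/jet form, determined by the images of the coordinate functions (classical rigidity of homomorphisms into Weil algebras, cf.\ Kol\'a\v{r}--Michor--Slov\'ak \S 35, or Moerdijk--Reyes); once $j^{(k)}$ is known to be of this form, a lift is produced by lifting the images of the coordinates arbitrarily and re-expanding. This Hadamard/Borel-type input is exactly where smoothness of the base genuinely enters the proof, and without naming it the induction has no starting motor. (The same input is silently used in your identification of the jet algebra $\widehat J$ with $\widehat{{\mathrm{Sym}}}_A\Gamma(\normal^*)$, via the fibrewise Borel theorem, though there the gap is milder since your closing argument only needs completeness of $\widehat J$ and its associated graded.) With that fact supplied, your proof is complete and supplies what the paper delegates to the literature.
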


\begin{deff}
\label{def:normalstructure}
We call a \emph{formal tubular neighborhood} an isomorphism as in Lemma \ref{lem:normalstructure}.
\end{deff}

\begin{rem}
\normalfont
Although the normal bundle $\vertical$ is canonically associated to a formal neighborhood
$C^{\mathrm{formal}} $, the filtered algebra isomorphism in the lemma \ref{lem:normalstructure} above is \emph{not} canonical.
\end{rem}

\begin{example}
\label{decomp:step2.5}
\normalfont
For the formal neighborhood of Example \ref{decomp:step1}, the choice of a tubular neighborhood, \textit{i.e.}\ a diffeomorphism between a neighborhood of $\leaf $ in $M $ and a neighborhood of the zero section in $\rmT M|_\leaf/\rmT\leaf $, induces a tubular neighborhood in the sense of Definition \ref{def:normalstructure}, i.e. a sheaf isomorphism as in Lemma \ref{lem:normalstructure}. 
\end{example}

We call \emph{vector field on a formal neighborhood $C^{\mathrm{formal}}$ of $\leaf $} a derivation of the sheaf $C^{\mathrm{formal}} $. The  space $\mathfrak X^{\mathrm{formal}} $  of all vector fields is a sheaf of Lie algebras and of  $C^{\mathrm{formal}}$-modules - both structures being related by the usual Leibniz identity:
 $$ [X,FY] =F [X,Y] + X[F] \, Y $$
 for all $F \in C^{\mathrm{formal}} $, and $X,Y \in  \mathfrak X^{\mathrm{formal}}$.
\begin{example}
\label{decomp:step3}
\normalfont
For the formal neighborhood of Example \ref{decomp:step1}, any vector field on $M$ induces a formal vector field, i.e. its infinite jet along $L$, and there is a natural Lie algebra morphism 
$\mathfrak X(M) \longrightarrow \mathfrak X^{\mathrm{formal}} $. 
\end{example}
 
 We say that a formal vector field $X \in  \mathfrak X^{\mathrm{formal}} $ is \emph{tangent to $L$} if it preserves the ideal $I_L = C^{\mathrm{formal}}_{\geq 1}$. Such a vector field can be evaluated at a point $\ell \in \leaf $ to give an element in the tangent space $T_\ell \leaf $ that we denote by $X_{\ell} $.

\begin{example}
\label{decomp:step4}
\normalfont
For the formal neighborhood of Example \ref{decomp:step1}, 
any vector field $X$ on $M$ tangent to $\leaf$ induces a formal vector field tangent to $\leaf $ and its induced element in $X_\ell \in T_\ell \leaf  $ coincides with the value $X(\ell) \in T_\ell L $ of $X  $ at $ \ell$. 
\end{example}

Let us define singular foliations in this context.

\begin{deff}
\label{def:formalsf}
Let $\leaf$ be a connected manifold.
We call a \textit{formal singular foliation on a formal neighborhood $ C^{\mathrm{formal}}$ of the manifold $ \leaf$} a subsheaf $ \mathcal F$ of $\mathfrak X^{\mathrm{formal}} $ that satisfies the three following conditions:
\begin{enumerate}
\item[1)] $ \mathcal F$ is stable under Lie bracket, 
\item[2)] $ \mathcal F$ is stable under multiplication by $C^{\mathrm{formal}} $,
\item[3)] $\cF$ is a locally finitely generated $C^{\mathrm{formal}} $-module.
\end{enumerate}
When, in addition, 
\begin{enumerate}
\item[a)]  every $ X \in \mathcal F$ is tangent to $\leaf $
\item[b)] for every $u \in T_\ell \leaf$, there exists $ X \in \mathcal F$ such that $X_\ell =u $,  
\end{enumerate} 
then we speak of a \emph{formal singular foliation along the leaf $\leaf $}.
\end{deff}
\begin{rem}
\normalfont
In fact, in the definition of a formal singular foliation along the leaf $\leaf$, the assumption "locally finitely generated" could be dropped: conditions 1) + 2) + a) + b) imply condition 3) in Definition \ref{def:formalsf}. This follows from the fact that formal power series form a Noetherian ring \cite{MR240826}.
\end{rem}

\begin{example}
\label{decomp:step5}
Consider the formal neighborhood of $ \leaf$ in a manifold $M$ of Example \ref{decomp:step1}. Any singular foliation $ \mathcal F$ (defined as in Hermann \cite{Hermann}, Dazord, Stefan, Sussmann \cite{Dazord,Stepan1,Stepan2,Sussmann1}, or non-commutative geometers, \textit{e.g.}\ Androulidakis, Debord, Mohsen, Skandalis, Yuncken, Zambon \cite{AS,Debord2,AMY,MZ16} as a locally finitely generated sub-sheaf of the module of vector fields stable under Lie bracket, see \cite{LLR} for a review of the matter) induces a singular foliation on the corresponding formal neighborhood of an embedded submanifold $ \leaf$. If $\leaf$ is a leaf of $\mathcal F $, then this induced formal singular foliation is a formal singular foliation along the leaf $\leaf$.
 \end{example}

\begin{example}
\label{ex:ppalbundleSF}
\normalfont
Here is an example of formal singular foliations admitting a given manifold $L $ as a leaf. Given the following pair:
\begin{enumerate}
    \item a finite-dimensional Lie group $G$  acting on $\mathbb{R}^d$. We do not assume that the action is by linear maps, but we assume that   $0$ is a fixed point.
    \item A principal $G$-bundle $Q$ over $L$.
\end{enumerate}
Consider the associated vector bundle over $ \leaf$ given by
 $$ T:=(Q \times \mathbb{R}^d)/G.$$
The manifold $T$ comes with a natural singular foliation that we now describe in two different ways. It may be seen as the unique singular foliation on $T$ whose pull-back on $Q \times \mathbb{R}^d$ is the direct product of the singular foliation $\mathfrak X(Q) $ of all vector fields on $Q$ with the singular  foliation associated with the $G$-action on $ \mathbb R^d$. It can also be seen as follows: There is a natural action of the gauge groupoid
 $$ \frac{Q\times Q}{G} \rightrightarrows L  $$
 on $T$. Its infinitesimal  action is an action of the Atiyah algebroid $ \rmT Q/G \to L$ (= the Lie algebroid of the gauge groupoid) on $T$.  
 The anchor map for this Lie algebroid action equips $T$ with a singular foliation. Since the zero section is an orbit of the gauge groupoid action, $L$ is a leaf of this singular foliation.
Now, if one is initially given a \emph{formal} action of $G$ on $\mathbb R^d $  preserving $0$, the construction goes through and yields a  \emph{formal} singular foliation along the leaf $L $. 
 Since the action of the isotropies of the gauge groupoid on fibers of $T$ are isomorphic to the action of $G$ on $\mathbb R^d $, the transverse singular foliation (to be defined below) is the one associated to the infinitesimal action of the Lie algebra $\mathfrak g $ of $G$ on $\mathbb R^d $.
\end{example}

\begin{example}\label{ex:HopfFibration}
\normalfont
An example of the recipe above is provided by \cite[\S 7, Ex.\ 7.9]{SRFCYM} (based on the example provided \cite[Example 7.3.20; page 287]{MR2157566}) with the help of the Hopf fibration. Consider
\begin{itemize}
	\item $L \coloneqq \mathbb{S}^4$
	\item $G = \mathbb{S}^3 \cong \mathrm{SU}(2)$
	\item $Q$ the Hopf fibration $\mathbb{S}^7 \to \mathbb{S}^4$.
\end{itemize}
Then $G$ acts faithfully on $\mathbb{C}^2$, preserving $0$, so that, following the construction of Example \ref{ex:ppalbundleSF}, we have a natural foliation $\cF$ on $T \coloneqq (Q \times \mathbb{C}^2)/H$ with transverse model $(\mathbb{C}^2, \cT_0)$, where $\cT_0$ consists of concentric spheres.

\end{example}

\begin{blue}
\begin{example}\label{ex:trivialsingularfoliation}
\normalfont
For any manifold $\mathcal U$, any integer $d$, and
any formal singular foliation $\mathcal T_0 $ near $0 \in \mathbb R^d $ whose generators $ X_1, \dots, X_r$ are assumed to vanish at $0$,  we call  \emph{trivial singular foliation on $\cU$  with transverse model $ \mathcal T_0$} the formal singular foliation defined on the trivial formal neighborhood of Example \ref{ex:obvious}, whose sections are made of formal vector fields of the form $ \sum_{i=1}^s g_i Y_i+\sum_{j=1}^r f_j X_j $
with $ Y_i$ smooth vector fields on $\cU$,and $ f_i,g_j$ formal functions on the trivial neighborhood. For $ \cU = \mathbb R^s$ 
with variables $ y_1, \dots, y_s$ it is generated by the vector fields 
 $$ \left(\frac{\partial}{\partial y_1}, \dots, \frac{\partial}{\partial y_s}, X_1, \dots, X_r \right) $$
 as a module over formal functions.
 \end{example}
\end{blue}

\subsection{Formal diffeomorphisms}\label{sec:FormalDiffeo} \cite{MR0212575}
Let us now work on $\mathbb{R}^d$ with formal functions $\mathbb R[[x_1 , \dots, x_d]] $ in $d$ variables. We recall a few facts.
First, for formal vector fields, the exponential map 
 $$ {\mathrm{exp}} : \mathfrak X^{\mathrm{formal}}_0 \longrightarrow {\mathrm{Diff}}^{\mathrm{formal}} \mleft(\mathbb R^d, 0\mright) $$
 is well-defined and maps the Lie algebra $\mathfrak{X}^{\mathrm{formal}}_0$ of formal vector fields vanishing at zero to the group $\mathrm{Diff}^{\mathrm{formal}}\mleft(\mathbb R^d,0\mright) $ of \emph{formal diffeomorphisms} of $\mathbb R^d $ fixing $0 \in \mathbb R^d$. Recall that the latter is by construction the group of filtered algebra isomorphisms of the ring of formal functions. 
This latter group admits a filtration by normal subgroups:
  $$ \cdots \hookrightarrow  \mathrm{Diff}^{\mathrm{formal}}\mleft(\mathbb R^d,0\mright)_{\geq 3}  \hookrightarrow  \mathrm{Diff}^{\mathrm{formal}}\mleft(\mathbb R^d,0\mright)_{\geq 2}  \hookrightarrow \mathrm{Diff}^{\mathrm{formal}}\mleft(\mathbb R^d,0\mright) $$
where $\mathrm{Diff}^{\mathrm{formal}}\mleft(\mathbb R^d,0\mright)_{\geq k} $ consists of formal diffeomorphisms $\Phi $ that coincide at $0$ with the identity map up to order $k $ at least, \textit{i.e.}\ 
$\Phi (F) -F \in \mathbb R[[x_1, \dots, x_d]]_{\geq k} $ for all $ F \in \mathbb R[[x_1, \dots, x_d]]$.
The quotients of 
$ \mathrm{Diff}^{\mathrm{formal}}\mleft(\mathbb R^d,0\mright)/\mathrm{Diff}^{\mathrm{formal}}\mleft(\mathbb R^d,0\mright)_{\geq k} $ are finite-dimensional manifolds (of $k$-jets) and are therefore Lie groups. 
Last, for any $\Phi \in \mathrm{Diff}^{\mathrm{formal}}\mleft(\mathbb R^d,0\mright)$, the map:
\begin{align*}
\Phi_*: \mathfrak X_0^{\mathrm{formal}}  &\longrightarrow   \mathfrak X_0^{\mathrm{formal}}\\ X & \mapsto   \Phi^{-1} \circ X  \circ \Phi
\end{align*}
is a Lie algebra isomorphism.

\begin{blue}

There is a natural diffeology  \cite{Iglesias-Zemmour}  on $\mathrm{Diff}^{\mathrm{formal}}\mleft(\mathbb R^d,0\mright)$: plots are maps to $\mathrm{Diff}^{\mathrm{formal}}\mleft(\mathbb R^d,0\mright)$ whose projections on the Lie group $ \mathrm{Diff}^{\mathrm{formal}}\mleft(\mathbb R^d,0\mright)/\mathrm{Diff}^{\mathrm{formal}}\mleft(\mathbb R^d,0\mright)_{\geq k} $  are smooth for all $k$. There is also a natural diffeology on $ \mathfrak X_0^{\mathrm{formal}}$: Plots are  maps from $ \mathbb R^n$ 
valued on  $ \mathfrak X_0^{\mathrm{formal}}$, and are  expressions of the forms:
\begin{equation}\label{smoothinX0} g(y_1, \dots,y_n) := \sum_{\substack{i_1 + \dots +i_d \geq 0 \\ i=1, \dots, d}} f^i_{i_1, \dots, i_d}(y_1, \dots,y_n) ~ x_1^{i_1} \dots x_d^{i_d} \frac{\partial}{\partial x_i} 
\end{equation}
for some smooth real-valued functions $
 f^i_{i_1, \dots, i_d}(y_1, \dots,y_n)$ defined in a neighborhood of $0$ in $ \mathbb R^n$ and such that $f^i_{0, \dots, 0}$ vanish at~$0$. 

\begin{rem}
\label{rem:1cocycles}
Now that we have a diffeology on $\mathrm{Diff}^{\mathrm{formal}}\mleft(\mathbb R^d,0\mright)$, one can state the following result: Given a manifold $L$, there is a one-to-one correspondence between formal neighborhoods of $L$ and principal $\mathrm{Diff}^{\mathrm{formal}}\mleft(\mathbb R^d,0\mright)$-bundles over $L$. In particular, a formal neighborhood of $L$ is induced by any $1$-cocycle, i.e.\ by the data of an open covering $ (U_{i})_{i\in I}$ of $L$ and smooth maps $g_{ij} \colon U_{ij}\to \mathrm{Diff}^{\mathrm{formal}}\mleft(\mathbb R^d,0\mright) $ such that $ g_{ij} \circ g_{jk}=g_{ik}$ and $g_{ij} = \mleft( g_{ji} \mright)^{-1}$, where $U_{ij} \coloneqq U_i \cap U_j$.
To such cocycles $g_{ij}\colon U_{ij}\to \mathrm{Diff}^{\mathrm{formal}}\mleft(\mathbb R^d,0\mright)$ we associate the formal neighborhood whose global formal sections are elements in $ \coprod_{i\in I} C^\infty(U_i)[[x_1,\dots,x_d]]$ such that for any pair of indices $i,j\in I$ the restrictions to $U_{ij}$ of the components in $C^\infty(U_i)[[x_1,\dots,x_d]]$ and $C^\infty(U_j)[[x_1,\dots,x_d]]$ correspond one to the other through the formal diffeomorphism $g_{ij}$. The diffeology has been precisely chosen so that this construction makes sense. 
\end{rem}

\begin{lemma}
\label{lem:form}
Any smooth map $f$ from a manifold $M$ to $\mathrm{Diff}^{\mathrm{formal}}\mleft(\mathbb R^d,0\mright)$ admits, near every point $m \in M$, a coordinate neighborhood with coordinates  $(y_1, \dots,y_n)$ centered at $m$ on which it is  of the form
\begin{equation}
\label{eq:shqpeExpDiffeology} f (y_1, \dots,y_n)=f(0,\dots,0)  \circ  {\mathrm{exp}} \left(g(y_1, \dots,y_n) \right) 
\end{equation}
for some smooth map $g(y_1, \dots,y_n) $ as in Equation \eqref{smoothinX0} with smooth real-valued functions $
 f^i_{i_1, \dots, i_d}(y_1, \dots,y_n)$ defined in a neighborhood of $0$ in $ \mathbb R^n$ and such that $f^i_{0, \dots, 0}(0)=0$. These functions are unique on some neighborhood of $ 0$. 
 \end{lemma}
 \begin{proof}
 The map $ {\mathrm{exp}} $ has a formal inverse at $0$ that we denote by ${\mathrm{ln}} $. The desired expression is then obtained as  $${\mathrm{ln}}\mleft( f(0,\dots,0)^{-1}  \circ f (y_1, \dots,y_n)\mright).$$
 It is of the desired form by construction. This completes the proof.
 \end{proof}
\end{blue}

\subsection{Formal singular foliations near a point}\label{sec:T000}
Consider a \emph{formal singular foliation $\mathcal T_0 $ near $0 \in \mathbb R^d $}, \textit{i.e.}\ a sub-module of $ \mathfrak X^{\mathrm{formal}} $ stable under Lie bracket and made of vector fields that vanish at $0$.
Since formal functions are a Noetherian ring, every sub-module is in fact finitely generated.
 Hence, $\mathcal T_0 $ is
 generated over $ \mathbb R[[x_1, \dots,x_d]]$ by formal vector fields $ X_1, \dots, X_r$ on $ \mathbb R^d$, 
 and there exists $ (h_{ij}^k)_{i,j,k=1,\dots,r} \in  \mathbb R[[x_1, \dots,x_d]] $ such that:
 \begin{equation}
 \label{eq:coeffs}
     [X_i,X_j]=\sum_{k=1}^r h_{ij}^k X_k
 \end{equation} 
 for all $i,j =1, \dots, r$. \begin{blue}
To start with, let us state a lemma which is only true because we consider formal singular foliations and would not be true for smooth singular foliations (see \cite{LLR}, Chapter 1, Remark 1.3.3., where a counter-example, provided by Georges Skandalis, is given). This lemma describes the natural diffeology of $ \mathcal T_0$ induced by the one of $\mathfrak{X}^{\mathrm{formal}}_0$.

\begin{lemma}\label{lem:diffeoT0}
Any smooth map $f$ from a manifold $M$ to $\mathfrak{X}^{\mathrm{formal}}_0$ valued in  $\mathcal T_0 $  admits, near every point $m$, a coordinate neighborhood with coordinates  $(y_1, \dots,y_n)$ centered at $m$ on which it is  of the form
$$ f (y_1, \dots,y_n)=   \sum_{\substack{i_1 + \dots +i_d \geq 0 \\ i=1, \dots, r}} f^i_{i_1, \dots, i_d}(y_1, \dots,y_n) x_1^{i_1} \dots x_d^{i_d}X_i   $$
for some smooth real-valued functions $
 f^i_{i_1, \dots, i_d}(y_1, \dots,y_n)$ defined in a neighborhood of $0$ in $ \mathbb R^n$. 
 Here $X_1, \dots,X_r $ are any set of generators of $ \mathcal T_0$.
\end{lemma}
\begin{proof}
The functions $
 f^i_{i_1, \dots, i_d}(y_1, \dots,y_n)$ are constructed by a sort of recursion on  $i_1 + \dots +i_d$, but with a subtlety, that involves the Artin-Rees lemma.  Let $ \mathcal I$ be the maximal ideal of formal functions with $n$ variables vanishing at the origin. 
For every $k$, let $p_k$ stand for the projection 
$$ p_k :  \mathfrak{X}^{\mathrm{formal}} \longrightarrow \frac{ \mathfrak  X^{\mathrm{formal}}}{  \mathcal I^{k+1}  \mathfrak  X^{\mathrm{formal}} }.$$
Since $ p_k \circ f$ belongs for all $ (y_1, \dots, y_n)$ to the subspace generated by $$  \left\{ p_k \mleft(x_1^{i_1} \dots x_d^{i_d}X_i\mright) ~ \mid ~ i_1 + \dots + i_d \leq k, ~ i=1, \dots, r \right\}, $$ 
there  exists smooth functions $  f^{(k),i}_{i_1, \dots, i_d}(y_1, \dots,y_n)$ 
 for $  0 \leq i_1 + \dots +i_d \leq k $ and $ i=1, \dots, r$ such that
 \begin{equation}
 \label{eq:orderk}
f (y_1, \dots,y_n)-   \sum_{\substack{0 \leq i_1 + \dots +i_d \leq k \\ i=1, \dots, r}} f^{(k),i}_{i_1, \dots, i_d}(y_1, \dots,y_n)\,  x_1^{i_1} \dots x_d^{i_d}X_i   
\end{equation}
is valued in  $ \mathcal I^{k+1}   \mathfrak X^{\mathrm{formal}} \cap \cT_0$. To prove the claim , we need to take the limit as $ k $ goes to infinity of the previous sums.
However, since the function $  f^{(k),i}_{i_1, \dots, i_d} $  depend on $k$, we are not allowed to. To be allowed to, we need to make sure that the sequence of function $ \left( f^{(k),i}_{i_1, \dots, i_d} \right)_{k\geq 0}$ is constant after a certain rank for all given $i_1, \dots,i_d,i$. Here is the point where we need to use Artin-Rees lemma, which applies since formal functions form a Noetherian ring, and states that there exists a natural number $\nu$ such that for all $k\geq \nu$:
 $$ \mathcal I^{k} \mathfrak X^{\mathrm{formal}} \cap \mathcal T_0 \subset \mathcal I^{k  - \nu }\mathcal T_0 .$$
 The idea now consists in constructing the functions $ f^{(k),i}_{i_1, \dots, i_d} $ by recursion as follows. 
 First, we choose of all of them to be zero for $ k=0$. Assume that we have constructed  $ f^{(k),i}_{i_1, \dots, i_d} $  that satisfy \eqref{eq:orderk} for a given $k$. Then the expression given in \eqref{eq:orderk}  belongs to $\mathcal I^{k+1}   \mathfrak X^{\mathrm{formal}}\cap \mathcal T_0$ by construction. In view of Artin-Rees Lemma, it belongs to $\mathcal I^{k  +1- \nu }\mathcal T_0$ and therefore reads 
as $$  \sum_{\substack{0 \leq i_1 + \dots +i_d \leq k+1 \\ i=1, \dots, r}} g^{(k),i}_{i_1, \dots, i_d}(y_1, \dots,y_n)\,  x_1^{i_1} \dots x_d^{i_d}X_i  + \mathcal I^{k+2} \mathfrak X^{\mathrm{formal}} \cap \cT_0 $$
for some smooth functions $ g^{(k+1),i}_{i_1, \dots, i_d}(y_1, \dots,y_n)$
which are identically equal to $0$  provided $  i_1 + \dots + i_d \leq k-\nu $. We also define them to be zero for 
 $  i_1 + \dots + i_d \geq k+2 $ by convention.

We can now define $$ f^{(k+1),i}_{i_1, \dots, i_d}(y_1, \dots,y_n) :=  f^{(k),i}_{i_1, \dots, i_d}(y_1, \dots,y_n)+  g^{(k),i}_{i_1, \dots, i_d}(y_1, \dots,y_n).$$
 Equation \eqref{eq:orderk} is now valid as order $ k+1$ instead of $k$. The sequence
 $$  \left( f^{(k),i}_{i_1, \dots, i_d}(y_1, \dots,y_n) \right)_{k \geq 0} $$ 
is constant for $ k \geq i_1 + \dots + i_d +\nu +1 $. This completes the proof.
\end{proof}
\end{blue}

\subsection{Inner and outer symmetries}\label{sec:InnerAndOuterSym}
To a formal singular foliation $ \mathcal T_0$ near $0$ on $ \mathbb R^d$, as in Section \ref{sec:T000}, we first associate three Lie algebras. Let $\mathrm{sym} (\mathcal T_0)$ be the Lie subalgebra of $ \mathfrak X_0^{formal}$ of all \emph{infinitesimal symmetries of $\mathcal T_0 $}, \textit{i.e.}\ formal vector fields $X$ on $\mathbb{R}^d$ vanishing at $0$ such that
$  
[X , \mathcal T_0 ]  \subset \mathcal T_0
$. Since $\mathcal T_0 $ is closed under the Lie bracket, the Lie algebra $\mathrm{sym} (\mathcal T_0)$ contains $ \mathcal T_0$ as a Lie ideal. We denote by $\mathrm{out}(\mathcal T_0)$ the quotient Lie algebra:
 $$ \xymatrix{\mathcal T_0 \ar@{^(->}[r] & \mathrm{sym} (\mathcal T_0) \ar@{->>}[r] &  \mathrm{out} (\mathcal T_0)  }  .$$

 To these Lie algebras, there correspond three groups (as in \cite{zbMATH01867165}), which are of a crucial importance in this paper and which we now introduce.
We call \emph{symmetries of $ \mathcal T_0$}
 the sub-group of all $\Phi \in  \mathrm{Diff}^{\mathrm{formal}}\mleft(\mathbb R^d,0\mright)$ that preserve $\mathcal T_0 $, \textit{i.e.}\ such that $\Phi_* (\mathcal T_0) = \mathcal T_0$.
  We denote this group by $\mathrm{Sym}(\mathcal T_0) $.

Now, it is a non-obvious theorem that for any $X \in \mathcal T_0$, $ {\mathrm{exp}}(X) \in  \mathrm{Sym}(\mathcal T_0)$; see \textit{e.g.}\ the chapter about the existence of leaves in \cite{LLR} or Proposition 1.6 in \cite{AS}, or the initial reference \cite{Hermann}. Of course, the given references have to be adapted to the formal case, which is routine. 
 
\begin{enumerate}
\item 
We call \emph{inner symmetries of $ \mathcal T_0$} the sub-group of 
$\mathrm{Sym}(\mathcal T_0) $ generated by $\mathrm{exp} (X)$ with $X \in \mathcal T_0$ (this group is considered in \cite{GV,AZ2}). 
We denote this group by ${\mathrm{Inner}}(\mathcal T_0) $.
 \item Since inner symmetries form a normal subgroup of the subgroup $\mathrm{Sym}(\mathcal T_0) $ of symmetries of $\mathcal T_0 $ (see \cite{LLR}, chapter 2),    their quotient  is a group that we call \emph{outer symmetries of $\mathcal T_0 $}.  We denote it by $\mathrm{Out}(\mathcal T_0) $.
 \end{enumerate}

We have by construction a commutative diagram as follows:
 $$ \xymatrix{\mathcal T_0 \ar@{^(->}[r] \ar[d]^{\mathrm{exp}} & \mathrm{sym} (\mathcal T_0) \ar@{->>}[r] \ar[d]^{\mathrm{exp}} & \ar[d]^{}   \mathrm{out} (\mathcal T_0)  \\
 \mathrm{Inner} (\mathcal T_0) \ar@{^(->}[r] & \mathrm{Sym} (\mathcal T_0) \ar@{->>}[r] &  \mathrm{Out} (\mathcal T_0) }  .$$
 Also, the upper line can be considered as the tangent space at the identity map of the lower line. To state it,  one needs a diffeology on these groups - that we now introduce.
The natural diffeology \cite{Iglesias-Zemmour} on $\mathrm{Diff}^{\mathrm{formal}}(\mathbb R^d,0) $ was already defined as follows: the plots are maps from $\mathcal V \subset \mathbb R^m $ to $\mathrm{Diff}^{\mathrm{formal}}(\mathbb R^d,0) $ whose image through the quotient in $\mathrm{Diff}^{\mathrm{formal}}(\mathbb R^d,0)/ \mathrm{Diff}^{\mathrm{formal}}(\mathbb R^d,0)_{\geq k} $ are ordinary smooth maps for all $k \geq 2$. There is an induced diffeology on both $\mathrm{Inner}(\mathcal T_0)$ and $\mathrm{Sym}(\mathcal T_0) $. The plots are maps from $\mathcal V $ to these subgroups which are smooth when composed with the natural inclusions into $\mathrm{Diff}^{\mathrm{formal}}(\mathbb R^d,0) $.
Last, for the group ${\mathrm{Out}}(\mathcal T_0) $ of outer symmetries, there is a natural diffeology whose plots are maps which are, locally, the quotients of plots valued in ${\mathrm{Sym}}(\mathcal T_0) $.

\begin{example}\label{ex:ConcentricCirclesInnAndOut}
\normalfont
Consider the formal singular foliation  $\mathcal T_0 $ on $\mathbb R^2 $ (called "concentric circles" in the introduction) generated by $ x \frac{\partial}{\partial y}- y \frac{\partial}{\partial x} $. 
In this case, $\mathrm{Inner}(\mathcal T_0) $ is the group of all formal diffeomorphisms $\Phi $ that preserve the distance to the origin,  \textit{i.e.}\ that satisfy
  $$ \Phi ( x^2+y^2 )=x^2+y^2,  $$
  and with positive determinant of the tangent map/total derivative $\rmD\Phi $ at $(0, 0)$.
  The Lie algebra ${\mathrm{sym}}(\mathcal T_0) $ also contains the Euler vector field ${\mathrm{Eul}}:=x \frac{\partial}{\partial x}+ y \frac{\partial}{\partial y} $ and all formal fields of the type: 
   $f(x^2+y^2) {\mathrm{Eul}}$ with $f$ a formal function in one variable. The group $\mathrm{Sym}(\mathcal T_0)  $ is   the group of all formal diffeomorphisms $\Phi $ that  satisfy
  $$ \Phi ( x^2+y^2 )=g_\Phi(x^2+y^2 ) $$
  for some formal diffeomorphism $g_\phi$ of $\mathbb R$ preserving $0$ and with positive derivative at $0$. If $g_\Phi (t) =t$, and if the determinant of the total derivative $\rmD\Phi $ at $(0, 0)$ is positive, then we recover the definition of inner symmetries. The quotient group $ \mathrm{Out}(\mathcal T_0) $ is therefore made of all formal diffeomorphisms of $\mathbb R $ preserving~$0$ and with positive derivative at the origin, times $\mathbb Z/2\mathbb Z $. That is, ${\mathrm{Out}}(\mathcal T_0)= {\mathrm{Diff}^{\mathrm{formal}}}(\mathbb R,0)_+ \times \mathbb Z/2\mathbb Z $.  
 Geometrically, this can be understood as follows. The leaves of the singular foliation $\mathcal T_0 $ are concentric circles plus the origin. Inner symmetries are those mapping each one of these circles to itself without changing the orientation, while symmetries allow to exchange circles, and to exchange the global orientation of these circles. 
\end{example}

There is also a natural family of sub-groups of $\mathrm{Inner}(\mathcal T_0)  $ to consider.
To start with, for every $k \geq 2$, let  $(\mathcal T_0)_{\geq k}   $ be the Lie subalgebra of $\mathcal T_0 $ made of formal vector fields in $ \mathcal T_0$ that vanish at order $\geq k$ at $0$. 
This Lie algebra admits a natural diffeology: the plots are maps whose image in $(\mathcal T_0)_{\geq k}/  (\mathcal T_0)_{\geq l}$ is an ordinary smooth map for all $l \geq k$.  
Let $\mathrm{Inner}(\mathcal T_0)_{\geq k}$ be the sub-group of  $\mathrm{Inner}(\mathcal T_0)$ generated by $\mathrm{exp}\left((\mathcal T_0)_{\geq k}\right) $.  It inherits a diffeology from its ambient space. 

\begin{example}
\normalfont
\label{ex:outis}
Let $\mathcal T_0 $ be now the singular foliation made of \emph{all} formal vector fields on $ \mathbb R^d$ that vanish at $0 $, 
$ {\mathrm{Sym}}((\mathcal T_0)_{\geq k})$ is then the group $ {\mathrm{Diff}}^{\mathrm{formal}}(\mathbb R^d, 0)_{\geq k}$.
${\mathrm{Inner}}(\mathcal T_0)$ is made of formal diffeomorphisms of $\mathbb R^d $ which are the products of flows of vector fields vanishing at $0$. Those are exactly formal diffeomorphisms whose differential at $0$ has a positive determinant.
For $ k\geq 2$, notice that for two formal diffeomorphisms $\phi,\psi$ fixing $0$ and having the same $k-1$-jet,  $\phi \circ \psi^{-1} $ is a diffeomorphism that coincides with the identity up to order $k$. Hence ${\mathrm{Inner}}(\mathcal T_0)_{\geq k}$ is in the image of $(\mathcal T_0)_{\geq k} $ through the exponential map by Lemma \ref{lem:exp2} below. This implies that $\mathrm{Out}((\mathcal T_0)_{\geq k})$ is the group of $(k-1)$-jets of diffeomorphisms fixing a point in $ \mathbb R^d$. In particular, for $ k=2$, $\mathrm{Out}((\mathcal T_0)_{\geq 2})$ is the group ${\mathrm{GL}}_d(\mathbb R) $.
\end{example}

The existence of a diffeology allows to state results as this lemma.

\begin{lemma} \label{lem:exp2}
For every $k \geq 2$, the group $\mathrm{Inner}(\mathcal T_0)_{\geq k} $ is contractible, and the exponential map $ (\mathcal T_0)_{\geq k} \to \mathrm{Inner}(\mathcal T_0)_{\geq k}$ is a bijection compatible with the respective diffeologies of these sets.
\end{lemma}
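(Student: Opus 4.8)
The plan is to exploit the fact that, for $k \geq 2$, the Lie algebra $(\mathcal{T}_0)_{\geq k}$ is \emph{pro-nilpotent}. A direct computation on coefficients shows that if $X,Y$ are formal vector fields vanishing at order $\geq k$ at $0$, then $[X,Y]$ vanishes at order $\geq 2k-1$, and $2k-1 \geq k+1$ precisely because $k \geq 2$; hence the bracket strictly raises the valuation, and an iterated bracket of length $n$ lands in order $\geq n(k-1)+1 \to \infty$. (The hypothesis $k \geq 2$ is essential: for $k=1$ the bracket need not raise the valuation and $\exp$ ceases to be injective, as the periodic flow of $x\partial_y - y\partial_x$ shows.) Consequently, on the complete filtered Lie algebra $\mathfrak{X}_{\geq k}$ of all formal vector fields vanishing at order $\geq k$, the series $\exp(X)^*F = \sum_{n} \frac{1}{n!}X^n(F)$ converges, as does the logarithm series built from $\Phi^* - \mathrm{id}$ for $\Phi \in \mathrm{Diff}^{\mathrm{formal}}(\mathbb{R}^d,0)_{\geq k}$; these are mutually inverse, so $\exp$ is a bijection $\mathfrak{X}_{\geq k} \to \mathrm{Diff}^{\mathrm{formal}}(\mathbb{R}^d,0)_{\geq k}$, the standard pro-nilpotent/pro-unipotent correspondence. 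Note in passing that $\exp(X) \in \mathrm{Diff}^{\mathrm{formal}}(\mathbb{R}^d,0)_{\geq k}$ for $X \in \mathfrak{X}_{\geq k}$, so that $\mathrm{Inner}(\mathcal{T}_0)_{\geq k}$ sits inside this subgroup.

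First I would deduce the bijection onto $\mathrm{Inner}(\mathcal{T}_0)_{\geq k}$ by restriction. Since $(\mathcal{T}_0)_{\geq k}$ is an $\mathbb{R}$-linear Lie subalgebra of $\mathfrak{X}_{\geq k}$ and the filtration is complete, the Baker--Campbell--Hausdorff series $\mathrm{BCH}(X,Y) = X + Y + \tfrac{1}{2}[X,Y] + \cdots$ converges and lands in $(\mathcal{T}_0)_{\geq k}$. Thus $\exp(X)\exp(Y) = \exp(\mathrm{BCH}(X,Y))$ together with $\exp(X)^{-1} = \exp(-X)$ show that $\exp\bigl((\mathcal{T}_0)_{\geq k}\bigr)$ is already a subgroup. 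As $\mathrm{Inner}(\mathcal{T}_0)_{\geq k}$ is by definition the subgroup \emph{generated} by this image, the two coincide, which gives surjectivity; injectivity is immediate from the logarithm, which furnishes a two-sided inverse. This is the conceptual heart of the argument, and the step I expect to be the main obstacle: BCH is exactly what collapses an arbitrary product of exponentials into a single one.

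Next I would treat the diffeologies. Working modulo order $\geq \ell$ for each $\ell \geq k$, both $\exp$ and $\log$ are given by finite sums (only finitely many terms survive, again by pro-nilpotency), hence are polynomial, and in particular smooth, maps between the finite-dimensional quotients $(\mathcal{T}_0)_{\geq k}/(\mathcal{T}_0)_{\geq \ell}$ and $\mathrm{Diff}^{\mathrm{formal}}(\mathbb{R}^d,0)_{\geq k}/\mathrm{Diff}^{\mathrm{formal}}(\mathbb{R}^d,0)_{\geq \ell}$. By the very definition of the two diffeologies --- a plot being a map that becomes smooth after projecting to every such finite quotient --- both $\exp$ and $\log$ carry plots to plots, so $\exp$ is an isomorphism of diffeological spaces.

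Finally, contractibility follows by transporting the linear structure. The scaling map $[0,1] \times (\mathcal{T}_0)_{\geq k} \to (\mathcal{T}_0)_{\geq k}$, $(t,X)\mapsto tX$, is diffeologically smooth (it is polynomial at each finite level) and contracts $(\mathcal{T}_0)_{\geq k}$ onto $0$. Conjugating by $\exp$, the map $(t,\Phi)\mapsto \exp\bigl(t\log\Phi\bigr)$ is then a smooth deformation retraction of $\mathrm{Inner}(\mathcal{T}_0)_{\geq k}$ onto the identity, so the group is contractible. Everything past the BCH/subgroup step is thus a formal consequence of pro-nilpotency.
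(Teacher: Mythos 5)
Your proposal is correct and follows essentially the same route as the paper's proof: bijectivity of $\exp$ on $(\mathcal T_0)_{\geq k}$ via the formal logarithm (resting on the valuation-raising property of the bracket for $k \geq 2$), the Baker--Campbell--Hausdorff formula to show that the image of $\exp$ is already a group and hence equals the generated group $\mathrm{Inner}(\mathcal T_0)_{\geq k}$, and level-by-level compatibility with the diffeologies, with contractibility transported from the linear contraction of $(\mathcal T_0)_{\geq k}$. The paper states these steps more tersely (citing the bijectivity of $\exp$ rather than deriving it from pro-nilpotency), but the logical skeleton is identical.
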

\begin{proof}
The exponential map, restricted to formal vector fields vanishing at least quadratically at $0$, is injective and surjective onto ${\mathrm{Diff}}^{\mathrm{formal}}(\mathbb R^d,0)_{\geq 2} $.  Hence, the restriction of the exponential map to $ (\mathcal T_{0})_{\geq k}$ is bijective onto its image, which is  $\mathrm{Inner}(\mathcal T_0)_{\geq k}$. In particular, they are contractible. 
\end{proof}

\subsection{Principal bundles over symmetries of a singular foliation}

In this section, $\mathcal T_0$ is a formal singular foliation near $0$ as in the previous section, and we are given a discrete group $K$ and an injective group morphism $\mathfrak i \colon K \to {\mathrm{Out}}(\mathcal T_0)  $.
Let us associate to it an exact sequence
\begin{equation}
\label{eq:defH}
\begin{aligned}
\xymatrix{ \mathrm{Inner} (\mathcal T_0) \ar@{^(->}[r] \ar[d]^{=} &  \ar@{^(->}[d] H \ar@{->>}[r] & K \ar@{^(->}[d]^{\mathfrak i}\\ 
 \mathrm{Inner} (\mathcal T_0) \ar@{^(->}[r] & \mathrm{Sym} (\mathcal T_0) \ar@{->>}[r] &  \mathrm{Out} (\mathcal T_0) }  
\end{aligned}
\end{equation}
where $H $ is given as the inverse image of $\mathfrak i (K) $ through the natural projection \linebreak $\xymatrix{\mathrm{Sym} (\mathcal T_0) \ar@{->>}[r] &  \mathrm{Out} (\mathcal T_0).}$
Such a group $H$ comes again with a natural diffeology, given by the inclusion into $\mathrm{Sym} (\mathcal T_0) $. Since $K$ is discrete,  the connected components of the diffeology of $H$  are naturally isomorphic to $\mathrm{Inner}(\mathcal T_0) $.  

We are going to state a lemma about the diffeology of $H$.

\begin{blue}
\begin{lemma}
\label{lem:diffeologyH}
The diffeology of $H$, induced by the one of  $\mathrm{Sym} (\mathcal T_0) $,  is such that
any smooth map $g$ from a manifold $M$ to $H$ admits, near every point $m$, a coordinate neighborhood with coordinates  $(y_1, \dots,y_n)$ centered at $m$ on which it is  of the form
\begin{equation}
\label{eq:gy1yr}
g (y_1, \dots,y_n)=g(0,\dots,0)  \circ  {\mathrm{exp}} \left(  \sum_{\substack{i_1 + \dots +i_d \geq 0 \\ i=1 \dots, r}} f^i_{i_1, \dots, i_d}(y_1, \dots,y_n) \, x_1^{i_1} \dots x_d^{i_d}  \,\, X_i   \right) 
\end{equation}
for some smooth real-valued functions $
 f^i_{i_1, \dots, i_d}(y_1, \dots,y_n)$
 defined in that neighborhood and vanishing at $0$.  
 \end{lemma}
 \begin{proof}
 Since $K$ is discrete, it suffices to prove the lemma for $ H=\mathrm{Inner} (\mathcal T_0) $. 
 In this case, the lemma is an obvious consequence of Lemma 
 \ref{lem:diffeoT0} and of the fact that the exponential map  $\mathcal T_0 \to \mathrm{Inner} (\mathcal T_0)  $ is invertible near $0$.
 \end{proof}
\end{blue}

Now, let us state a Proposition about $H$-principal bundles.
For every $k \geq 2$, the group $\mathrm{Inner}(\mathcal T_0)_{\geq k}$
is a normal subgroup of $\mathrm{Inner}(\mathcal T_0)$.
The quotient
\begin{equation}\label{eq:quotientLG} 
\frac{H}{\mathrm{Inner}(\mathcal T_0)_{\geq k }}    \end{equation}
admits a  Lie group structure (by Cartan's theorem), with Lie algebra the finite dimensional quotients $\mathcal T_0 / \mathcal T_{\geq k} $. 

As a consequence, for any diffeological principal $H$-bundle $P \to \leaf $ and any $k \geq 2 $, the quotient of $ P$ by  ${\mathrm{Inner}}(\mathcal T)_{\geq k} $ is a finite dimensional principal bundle with respect to the  Lie group as in \eqref{eq:quotientLG}.

\begin{prop}
\label{prop:simplifyBundle}
Let $\leaf $ be a manifold and $ \mathcal T_0$
 be a formal singular foliation near $0 \in \mathbb R^d$. Let $ H \subset {\mathrm{Sym}}(\mathcal T_0) $ be as in Equation \eqref{eq:defH}. Then
 there is a one-to-one correspondence between:
\begin{enumerate}
\item[(i)]  diffeological principal $H$-bundles over $ \leaf$, and
\item[(ii)] ordinary\footnote{I.e., finite-dimensional} principal $ H / {\mathrm{Inner}}(\mathcal T_0)_{\geq 2} $-bundles over $\leaf $.
\end{enumerate}
\end{prop}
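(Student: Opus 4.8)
The plan is to exploit the fact that the kernel $\mathrm{Inner}(\mathcal T_0)_{\geq 2}$ of the projection $H \twoheadrightarrow H/\mathrm{Inner}(\mathcal T_0)_{\geq 2}$ is not merely contractible (Lemma \ref{lem:exp2}) but is the inverse limit of a tower of vector-group extensions, which lets me reduce everything to classical finite-dimensional differential geometry. The forward map from (i) to (ii) is the obvious one: to a diffeological $H$-bundle $P \to \leaf$ I assign $P/\mathrm{Inner}(\mathcal T_0)_{\geq 2}$, which is an ordinary finite-dimensional principal bundle for the Lie group $H/\mathrm{Inner}(\mathcal T_0)_{\geq 2}$ by the discussion preceding the proposition. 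To build the inverse I would set, for each $k \geq 2$,
$$ Q^{(k)} := H/\mathrm{Inner}(\mathcal T_0)_{\geq k}, \qquad V_k := \mathrm{Inner}(\mathcal T_0)_{\geq k}/\mathrm{Inner}(\mathcal T_0)_{\geq k+1}, $$
so that each $Q^{(k)}$ is a finite-dimensional Lie group (by Cartan's theorem, as recalled above), the projections $Q^{(k+1)} \twoheadrightarrow Q^{(k)}$ are surjective Lie group morphisms whose kernel $V_k \cong (\mathcal T_0)_{\geq k}/(\mathcal T_0)_{\geq k+1}$ is a finite-dimensional vector group (this identification is via the exponential map and the Baker--Campbell--Hausdorff formula, whose higher terms land in $\mathrm{Inner}(\mathcal T_0)_{\geq k+1}$ precisely because $k \geq 2$), and $H = \varprojlim_k Q^{(k)}$ since $H \subset \mathrm{Diff}^{\mathrm{formal}}(\mathbb R^d,0)$ is complete for this filtration.

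The heart of the argument is a single lifting step. For each $k$ the extension $1 \to V_k \to Q^{(k+1)} \to Q^{(k)} \to 1$ has abelian kernel the vector group $V_k$, on which $Q^{(k)}$ acts \emph{linearly} by conjugation (continuous automorphisms of a vector group are linear). Given a $Q^{(k)}$-bundle $P^{(k)} \to \leaf$, the associated bundle $E_k := P^{(k)} \times_{Q^{(k)}} V_k$ is then a genuine vector bundle over $\leaf$, and both the obstruction to lifting $P^{(k)}$ to a $Q^{(k+1)}$-bundle and the set of isomorphism classes of such lifts are controlled by the \v{C}ech cohomology groups $H^2(\leaf;\underline{E_k})$ and $H^1(\leaf;\underline{E_k})$ of the sheaf of smooth sections of $E_k$. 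Since this sheaf is fine (partitions of unity), these groups vanish, so every $Q^{(k)}$-bundle admits a lift to a $Q^{(k+1)}$-bundle, and the lift is unique up to isomorphism. Starting from $P^{(2)} := R$ and iterating produces a tower $(P^{(k)})_{k \geq 2}$ of finite-dimensional principal bundles, and I would set $P := \varprojlim_k P^{(k)}$.

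It then remains to verify that $P$ is a diffeological $H$-principal bundle over $\leaf$ and that the two assignments $P \mapsto P/\mathrm{Inner}(\mathcal T_0)_{\geq 2}$ and $R \mapsto \varprojlim_k P^{(k)}$ are mutually inverse; this is where I expect the real work to lie. The diffeology on $P$ is the one induced from the tower (a plot is a compatible family of plots into the $P^{(k)}$), and local triviality and freeness of the $H = \varprojlim Q^{(k)}$-action are checked by passing to the finite-dimensional quotients, where everything is classical. That the round trip starting from (ii) returns $R$ is clear, since by construction $P^{(k+1)}/V_k \cong P^{(k)}$, whence $P/\mathrm{Inner}(\mathcal T_0)_{\geq 2} \cong P^{(2)} = R$; that the round trip starting from (i) returns $P$ uses the uniqueness of lifts established above, applied to the canonical tower $\bigl(P/\mathrm{Inner}(\mathcal T_0)_{\geq k}\bigr)_{k}$ attached to a given $H$-bundle. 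The main obstacle is thus the bookkeeping across the inverse limit: one must ensure that the per-step isomorphisms of lifts, which are individually non-canonical, can be chosen coherently so that the limiting bundle is well defined up to isomorphism, and that no information is lost or created in the passage between the infinite-dimensional diffeological bundle and its tower of finite-dimensional truncations.
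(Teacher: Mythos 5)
Your proposal is correct and follows essentially the same route as the paper's proof: the quotient map from (i) to (ii), an inductive lifting of an $H/\mathrm{Inner}(\mathcal T_0)_{\geq k}$-bundle to an $H/\mathrm{Inner}(\mathcal T_0)_{\geq k+1}$-bundle whose obstruction and ambiguity live in the \v{C}ech $H^2$ and $H^1$ of the sheaf of sections of the associated vector bundle (which vanish since the sheaf is fine --- this is exactly Lemma \ref{lem:flatsheaf}, with the kernel identified as a vector group via the exponential and Baker--Campbell--Hausdorff, \textit{i.e.}\ Lemma \ref{lem:exp2}), followed by passage to the limit over $k$. Your abstract obstruction-theoretic packaging and the inverse-limit formulation are just a repackaging of the paper's explicit cocycle computation and its convergence of cocycles in the diffeology, and the coherence bookkeeping you flag at the end is resolved the same way in both arguments, by the uniqueness-of-lifts ($H^1$-vanishing) step.
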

\noindent
The crucial point of the proof is that $\mathrm{Inner}(\mathcal T)_{\geq 2} $ is a contractible normal subgroup in view of Lemma \ref{lem:exp2}. Since we work with diffeological bundles, we have to adapt classical differential geometric proofs.
For every $ k \geq 2$, the quotient space
$(\mathcal T_0)_{\geq k}/ (\mathcal T_0)_{\geq k+1} $ is finite dimensional, and is acted upon by the Lie group
$ H / {\mathrm{Inner}}(\mathcal T_0)_{\geq k}$.
The following lemma is therefore obvious, since the sheaf  is a module over smooth functions on $L$, so that it admits partitions of unity.

\begin{lemma}
\label{lem:flatsheaf}
Let $ P_k \to \leaf$ be an $ H / {\mathrm{Inner}}(\mathcal T_0)_{\geq k}$-principal bundle over $\leaf$.
The \v{C}ech cohomology of sections of the associated bundle $$ \frac{P_k \otimes (\mathcal T_0)_{\geq k}/ (\mathcal T_0)_{\geq k+1}}{H / {\mathrm{Inner}}(\mathcal T_0)_{\geq k}} $$
is zero in every positive degree. 
\end{lemma}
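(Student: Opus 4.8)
The plan is to recognize the object in the statement as the sheaf of smooth sections of an \emph{ordinary} finite-dimensional vector bundle over $\leaf$, and then to invoke the standard fact that such sheaves are fine, hence acyclic. First I would make the bundle explicit. By the discussion around \eqref{eq:quotientLG}, the quotient $G_k := H/\mathrm{Inner}(\mathcal{T}_0)_{\geq k}$ is a finite-dimensional Lie group, so the given $G_k$-principal bundle $P_k \to \leaf$ is an ordinary smooth principal bundle (this is exactly the reduction guaranteed by Proposition \ref{prop:simplifyBundle} together with the finite-dimensionality of $G_k$). The space $V := (\mathcal{T}_0)_{\geq k}/(\mathcal{T}_0)_{\geq k+1}$ is finite-dimensional and carries a linear $G_k$-action---the subgroup $\mathrm{Inner}(\mathcal{T}_0)_{\geq k}$ acts trivially on it because for $X,Y \in (\mathcal{T}_0)_{\geq k}$ one has $[Y,X] \in (\mathcal{T}_0)_{\geq 2k-1} \subseteq (\mathcal{T}_0)_{\geq k+1}$ when $k \geq 2$. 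Hence the associated bundle
$$ E := \frac{P_k \times V}{G_k} \longrightarrow \leaf, $$
which is what the notation in the statement denotes, is a genuine finite-dimensional smooth vector bundle, and the sheaf whose \v{C}ech cohomology we must compute is precisely the sheaf $\mathcal{S}$ of its smooth local sections.

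The key observation is then that $\mathcal{S}$ is a sheaf of modules over the structure sheaf $C^\infty_\leaf$ of smooth functions. Since $\leaf$ is a smooth manifold it is paracompact, so every open cover admits a subordinate smooth partition of unity; multiplication by the bump functions of such a partition makes $\mathcal{S}$ a fine sheaf. Fine sheaves on paracompact spaces are acyclic, their sheaf cohomology vanishes in positive degrees, and for paracompact $\leaf$ this coincides with \v{C}ech cohomology. This is the entire content of the lemma, which is why the authors call it obvious.

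To avoid quoting the abstract comparison theorem, I would instead exhibit the standard contracting homotopy on the \v{C}ech complex directly. Given an open cover $\{U_i\}_{i\in I}$ of $\leaf$, a subordinate partition of unity $\{\rho_i\}$, and a \v{C}ech $n$-cocycle $\sigma=(\sigma_{i_0\cdots i_n})$ with $n \geq 1$, I would set
$$ (\kappa\sigma)_{i_0\cdots i_{n-1}} := \sum_{j\in I} \rho_j \, \sigma_{j\, i_0\cdots i_{n-1}}, $$
where each summand is extended by zero outside its domain of definition, which is legitimate thanks to $\mathrm{supp}(\rho_j) \subset U_j$. A direct computation of $\delta(\kappa\sigma)$, using $\sum_j \rho_j = 1$ and the cocycle condition, yields $\delta(\kappa\sigma)=\sigma$, so every cocycle of positive degree is a coboundary and the cohomology vanishes.

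There is no serious obstacle here; the statement is essentially immediate. The only points that genuinely use the hypotheses assembled earlier are that $E$ really is a \emph{finite-dimensional smooth} vector bundle---which is exactly why one needs $G_k$ to be a finite-dimensional Lie group and $V$ to be finite-dimensional---and that the partition-of-unity argument be run with \emph{smooth} rather than merely diffeological data. The passage from the \emph{a priori} diffeological object $P_k$ to an ordinary smooth principal bundle is precisely what Proposition \ref{prop:simplifyBundle} and the finite-dimensionality of $G_k$ provide, so no diffeological refinement of the partition-of-unity argument is required.
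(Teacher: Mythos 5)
Your proof is correct and is essentially the paper's own argument: the paper simply declares the lemma obvious because the sheaf in question is the sheaf of sections of an ordinary finite-dimensional vector bundle (hence a fine sheaf, hence acyclic), which is exactly what you spell out via the partition-of-unity contracting homotopy. One small repair: do not invoke Proposition \ref{prop:simplifyBundle} to justify that $P_k$ is an ordinary smooth principal bundle, since the lemma is used inside that proposition's proof and this would be circular; the fact you actually need --- that $H/\mathrm{Inner}(\mathcal T_0)_{\geq k}$ is a finite-dimensional Lie group, so that $P_k$ and the associated bundle are ordinary finite-dimensional objects --- is already established in the discussion around \eqref{eq:quotientLG}.
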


\begin{proof}[Proof of Proposition \ref{prop:simplifyBundle}]
The map from item (i) to item (ii) consists in mapping an $H$-bundle $P$ to $ P/{\mathrm{Inner}(\mathcal T_0)_{\geq 2}} $.
We have to show that this map is into and onto.
We start with "onto". Consider a  principal $H/{\mathrm{Inner}}(\mathcal T_0)_{\geq 2}$-bundle $ P_2$ as in item (ii). Assume that it already admits an extension to a principal $H/{\mathrm{Inner}}(\mathcal T_0)_{\geq k}$-bundle $P_k \to L$. Let us show that $ P_k$ admits an extension $P_{k+1}$ to a principal $H/{\mathrm{Inner}}(\mathcal T_0)_{\geq k+1}$-bundle. Let $(\mathcal U_a)_{a \in A} $ be an open cover  of $ \leaf$ by contractible open sets with contractible intersections. The bundle $P_k \to \leaf $  admits on each $\mathcal U_a $ a section $\sigma_a^{(k)} $. 
The smooth functions $\sigma_{ab}^{(k)} \colon \mathcal U_{ab} \longrightarrow H/{\mathrm{Inner}}(\mathcal T_0)_{\geq k} $ that satisfy $\sigma_{ab}^{(k)}\cdot \sigma_b^{(k)} = \sigma_a^{(k)} $ can be lifted to smooth maps $ \tilde{\sigma}_{ab} \colon \mathcal U_{ab} \to H   $. We can assume $ \tilde{\sigma}_{ab}=\tilde{\sigma}_{ba}^{-1}$. Since the smooth functions $\sigma_{ab}^{(k)}$ form a $1$-cocycle 
in $ H/ {\mathrm{Inner}}(\mathcal T_0)_{\geq k}$, their lifts satisfy that 
for every $a,b,c \in A$ the map
$\tilde{\sigma}_{ab} \circ  \tilde{\sigma}_{bc} \circ \tilde{\sigma}_{ca} $
  is valued in ${\mathrm{Inner}}(\mathcal T_0)_{\geq k} $. By Lemma \ref{lem:exp2}, there exists therefore a uniquely defined family of maps $\tau_{abc}  \in (\mathcal T_0)_{\geq k} $ such that: 
$$   \tilde{\sigma}_{ab} \circ  \tilde{\sigma}_{bc} = {\mathrm{exp}}\left( \tau_{abc}\right) \circ \tilde{\sigma}_{ac}  $$
Let $\tau \mapsto \bar{\tau} $ be the projection from the sheaf $(\mathcal T_0)_{\geq k} $ to $(\mathcal T_0)_{\geq k}/(\mathcal T_0)_{\geq k+1} $.  One can check by writing $ \tilde{\sigma}_{ab} \circ \tilde{\sigma}_{bd} \circ {\mathrm{exp}}\left( \tau_{bcd}\right)$ in two different manners, that is, on one hand
 \begin{eqnarray*} \tilde{\sigma}_{ab} \circ  \tilde{\sigma}_{bc} \circ \tilde{\sigma}_{cd} &=&   {\mathrm{exp}}\left( \tau_{abc}\right)  \circ  \tilde{\sigma}_{ac} \circ \tilde{\sigma}_{cd} =  {\mathrm{exp}}\left( \tau_{abc}\right)  \circ   {\mathrm{exp}}\left( \tau_{acd}\right)  \circ \tilde{\sigma}_{ad}~, \end{eqnarray*} 
 and on the other hand
\begin{eqnarray*} \tilde{\sigma}_{ab} \circ  \tilde{\sigma}_{bc} \circ \tilde{\sigma}_{cd}  & =&  \tilde{\sigma}_{ab}  \circ {\mathrm{exp}}\left( \tau_{bcd}\right) \circ \tilde{\sigma}_{bd} \\ &= &  {\mathrm{exp}}\left(  \tilde{\sigma}_{ab} (\tau_{bdc})\right) \circ \tilde{\sigma}_{ab} \circ \tilde{\sigma}_{bd} \\
 &=&    {\mathrm{exp}}\left(  \tilde{\sigma}_{ab} (\tau_{bcd})\right) \circ  {\mathrm{exp}}\left(  \tau_{acd}\right) \circ \tilde{\sigma}_{ad}~. \end{eqnarray*} 
 
Hence $${\mathrm{exp}}\left( \tau_{abc}\right)  \circ   {\mathrm{exp}}\left( \tau_{acd}\right)  
 =   {\mathrm{exp}}\left(  \tilde{\sigma}_{ab} (\tau_{bcd})\right) \circ  {\mathrm{exp}}\left(  \tau_{acd}\right).$$
This relation, taken modulo $   (\mathcal T_0)_{\geq k+1}$, implies  by Baker-Campell-Hausdorf that
$$ \bar \tau_{acd}+\bar\tau_{abc} =  \sigma_{ab}^{(k)} \cdot  \bar\tau_{bcd}+\bar\tau_{abd} $$
i.e.,
the family $(\bar{\tau}_{abc})_{a,b,c \in A}$ is a cocycle in the \v{C}ech cohomology in Lemma \ref{lem:flatsheaf}.
 Lemma \ref{lem:flatsheaf}
implies therefore that  the family $ (\bar{\tau}_{abc})_{a,b,c\in A}$ is a $2$-coboundary, \textit{i.e.}\ there exists a family $\theta_{ab} \colon \mathcal U_{ab} \longrightarrow   (\mathcal T_0)_{\geq k} $ such that 
 $$  \bar{\tau}_{abc} = \sigma_{ab}^{(k)}\cdot  \bar{\theta}_{bc}  - \bar{\theta}_{ac} 
 + \bar{\theta}_{ab}. $$
 This implies that the family of maps $\mathcal U_{ab}\to H/{\mathrm{Inner}}(\mathcal T_0)_{\geq k+1} $ defined by $$ \sigma_{ab}^{(k+1)} := {\mathrm{exp}}(-\theta_{ab}) \circ \tilde{\sigma}_{ab}  \hbox{ modulo ${\mathrm{Inner}}(\mathcal T_0)_{\geq k+1} $ }$$ is a $1$-cocycle. 
 It therefore corresponds to a principal $H/{\mathrm{Inner}}(\mathcal T_0)_{\geq k+1}$-bundle $ P_{k+1}$. Moreover, 
 applying successively this method starting at $k=2$,  one obtains a family of maps $\left( \sigma_{ab}^{(k)} \right)_{k \geq 1} $ that converge with respect to the considered diffeologies, and  whose limit is a principal $H$-bundle $P$ over $\leaf $. 

 Now, we have to check that it is "into", \textit{i.e.}\ that two $H$-principal bundles $P$ and $\tilde{P}$ inducing the same $ H/\mathrm{Inner}(\mathcal T_0)_{\geq 2}$-principal bundle are isomorphic. Again, let us show that if their induced $ H/\mathrm{Inner}(\mathcal T_0)_{\geq k}$-bundles $P_k$ and $\tilde{P}_k $ are isomorphic for some $ k\geq 2$, so are their induced principal $ H/\mathrm{Inner}(\mathcal T_0)_{\geq k+1}$-bundles $P_{k+1}$ and $\tilde{P}_{k+1} $. This comes from the fact that if the associated $1$-cocycles $(\sigma_{ij})$ and $  (\tilde{\sigma}_{ij})$ (computed with the help of local sections $(\sigma_a)_{a \in A}$ and $(\tilde{\sigma}_a)_{a \in A} $ respectively) match up to order $k$, then for all indices $a,b \in A$, Lemma \ref{lem:exp2} implies that $\sigma_{ab}  \circ(\tilde{\sigma}_{ab})^{-1} = {\mathrm{exp}}\left({\tau_{ab}}\right)$ for some $\tau_{ab}$ valued in $ (\mathcal T_0)_{\geq k}$. Again, the family $\bar{\tau}_{ab} \in ({\mathcal T}_0)_{\geq k}/({\mathcal T}_0)_{\geq k+1} $ form a $1$-cocycle for the  \v{C}ech cohomology as in Lemma \ref{lem:flatsheaf}. It is therefore a coboundary for some $(\eta_a)_{a \in A} $.
 In turn, the family of local sections $\mathrm{exp}(\eta_a) \sigma_a  $ over $ \mathcal U_a$ of $P \to L$ defines a $1$-cocycle that matches the one of $\tilde{P}$ up to order $k+1 $, and hence defines an isomorphism $P_{k+1} \simeq \tilde{P}_{k+1} $ of principal $H/{\mathrm{Inner}}(\mathcal T_0)_{\geq k+1} $-bundles.  Applying the argument recursively starting at $k=2$, one gets a family of local sections of $P \to M$ and  $\tilde P \to M$ that define the same $1$-cocycle. These sections converge by definition of the diffeology, and provide by construction the desired isomorphism $ P \simeq \tilde{P}$.
\end{proof}

Proposition \ref{prop:simplifyBundle}
applied to $H= \mathrm{Inner}(\mathcal T_0)  $  gives the following interesting particular case.

\begin{cor}
\label{coro:simplifyBundle}
Let $\leaf $ be a manifold and $ \mathcal T_0$
 be a formal singular foliation near $0 \in \mathbb R^d$. 
 There is a one-to-one correspondence between:
\begin{enumerate}
\item[(i)]  diffeological $ \mathrm{Inner}(\mathcal T_0) $-principal bundles over $ \leaf$ and
\item[(ii)] ordinary $  \mathrm{Inner}(\mathcal T_0)/ \mathrm{Inner}(\mathcal T_0)_{\geq 2} $-principal bundles over $\leaf $.
\end{enumerate}
\end{cor}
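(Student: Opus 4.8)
The plan is to obtain this statement as the special case of Proposition \ref{prop:simplifyBundle} corresponding to the trivial discrete group $K = \{e\}$. First I would recall that any choice of discrete group $K$ together with an injective morphism $\mathfrak i \colon K \to \mathrm{Out}(\mathcal T_0)$ produces, via Diagram \eqref{eq:defH}, a subgroup $H \subset \mathrm{Sym}(\mathcal T_0)$ to which the proposition applies. Taking $K = \{e\}$ the trivial group, the image $\mathfrak i(K)$ is the identity element of $\mathrm{Out}(\mathcal T_0)$, so that $H$ is by definition the inverse image of $\{e\}$ through the projection $\mathrm{Sym}(\mathcal T_0) \twoheadrightarrow \mathrm{Out}(\mathcal T_0)$.

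Next I would identify this inverse image. Since the bottom row of Diagram \eqref{eq:defH} is the defining short exact sequence $\mathrm{Inner}(\mathcal T_0) \hookrightarrow \mathrm{Sym}(\mathcal T_0) \twoheadrightarrow \mathrm{Out}(\mathcal T_0)$, the kernel of this projection is precisely $\mathrm{Inner}(\mathcal T_0)$; hence $H = \mathrm{Inner}(\mathcal T_0)$ as diffeological groups, both carrying the diffeology inherited from $\mathrm{Sym}(\mathcal T_0)$. This is also consistent with the remark following \eqref{eq:defH}, according to which the connected components of $H$ are isomorphic to $\mathrm{Inner}(\mathcal T_0)$: for $K$ trivial there is a single component, so $H = \mathrm{Inner}(\mathcal T_0)$.

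Finally, with this identification the quotient $H/\mathrm{Inner}(\mathcal T_0)_{\geq 2}$ appearing in item (ii) of Proposition \ref{prop:simplifyBundle} becomes exactly $\mathrm{Inner}(\mathcal T_0)/\mathrm{Inner}(\mathcal T_0)_{\geq 2}$, and item (i) specializes verbatim to diffeological $\mathrm{Inner}(\mathcal T_0)$-principal bundles over $\leaf$. Thus the bijection furnished by the proposition is precisely the one asserted in the corollary. There is no genuine obstacle here: the only thing to check is the bookkeeping that the trivial $K$ yields $H = \mathrm{Inner}(\mathcal T_0)$, which is immediate from the exactness of the bottom row of \eqref{eq:defH}.
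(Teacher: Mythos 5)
Your proposal is correct and is exactly the paper's argument: the corollary is stated there as Proposition \ref{prop:simplifyBundle} applied to $H=\mathrm{Inner}(\mathcal T_0)$, which is the case $K=\{e\}$ in Diagram \eqref{eq:defH}, just as you spell out. Your only addition is the (correct and immediate) bookkeeping that the preimage of the trivial subgroup of $\mathrm{Out}(\mathcal T_0)$ is the kernel $\mathrm{Inner}(\mathcal T_0)$, which the paper leaves implicit.
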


\begin{rem}
\label{rem:concentric}
Notice that there is a natural group morphism:
 $$  \mathrm{Inner}(\mathcal T_0)/\mathrm{Inner}(\mathcal T_0)_{\geq 2} \longrightarrow  \mathrm{Inner}(\mathcal T_0)_{\mathrm{lin}}  $$
which is a local diffeomorphism. Here $\mathrm{Inner}(\mathcal T_0)_{\mathrm{lin}} \subset {\mathrm{GL}}_d(\mathbb R^d) $ is the subgroup obtained by considering the differential at $0$ of all elements in $\mathrm{Inner}(\mathcal T_0)$. This local diffeomorphism may not be an isomorphism in general.
Here is a counter example: take $\mathcal T_0 $ to be the formal singular foliation on $\mathbb R^2 $ (called "spirals" in the introduction) generated by $$ x \frac{\partial}{\partial y} - y \frac{\partial}{\partial x} + (x^2+y^2) \left(x \frac{\partial}{\partial x} + y \frac{\partial}{\partial y}\right).$$ Then 
$\mathrm{Inner}(\mathcal T_0)_{\mathrm{lin}}\simeq  S^1$ while $\mathrm{Inner}(\mathcal T_0)/\mathrm{Inner}(\mathcal T_0)_{\geq 2} \simeq \mathbb R$. 
\end{rem}

\begin{blue}
We intend to prove a second proposition that explains how  a principal $ H$-bundle with $ H$ a group as in \eqref{eq:defH} induces a formal singular foliation along the leaf $L$.

We start with a lemma. Let $\cU$ be a manifold (it will be an open subset of $\leaf$ below, hence the notation). 
Consider the trivial formal neighborhood of $\cU$ (see Example \ref{ex:obvious}). 
Any smooth map 
\begin{align*}
 g \colon   \cU & \to  {\mathrm{Diff}}^{\mathrm{formal}} \left(\mathbb R^d, 0\right) 
 \\ 
 u & \mapsto g_u 
 \end{align*}
  can be seen as a formal diffeomorphism of this formal neighborhood: It maps $ (y,x)$ in $ \cU \times \mathbb R^d$ to $ (u,g_u(x))$, or, in terms of sheaves:
\begin{equation}\label{eq:formaldiffeo}
      \sum_{i_1, \dots, i_d \geq 0} f_{i_1, \dots, i_d}(u) ~ x_1^{i_1} \dots x_d^{i_d} \mapsto  \sum_{i_1, \dots, i_d \geq 0} f_{i_1, \dots, i_d}(u) ~ g_u \left( x_1^{i_1} \dots x_d^{i_d} \right). \end{equation}

\begin{lemma}  
\label{lem:simplifyBundle}
Let $\mathcal U $ be a manifold and $ \mathcal T_0$
 be a formal singular foliation near $0 \in \mathbb R^d$. Let $ H \subset {\mathrm{Sym}}(\mathcal T_0) $ be a sub-group as in Equation \eqref{eq:defH}. Any smooth map $$ 
 \begin{array}{rcll} g \colon & \cU &\longrightarrow &  H\\ &y &\mapsto  & g_y\end{array}, $$ 
 seen as the formal diffeomorphism  of $ \mathcal U \times \mathbb R^d$ as in \eqref{eq:formaldiffeo},
 is a symmetry of the trivial singular foliation on $\cU$ with transverse model $  \mathcal T_0$.
 \end{lemma}
 \begin{proof}
 Let $ (y_1, \dots, y_n)$ be local coordinates on $ \mathcal U$ centered at $0$. Let us decompose $g$ as in Equation \eqref{eq:gy1yr} in Lemma \ref{lem:diffeologyH}.
For every $k=1, \dots, n$, the pushforward $ g_*$ of the vector field $\frac{\partial }{\partial y_k}$ through the formal diffeomorphism $g$ is the formal vector field
    \begin{align*}
        g_* \left(\frac{\partial }{\partial y_k}\right)  &= \frac{\partial }{\partial y_k}+ g^{-1} \frac{\partial g_y}{\partial y_k} , \\
        &= \frac{\partial }{\partial y_k}+ g(0,\dots, 0)_* \mleft(  e^{-X} \left.\frac{\rmd}{\rmd t} e^{X + tY} \right|_{t=0}  \mright)
        \end{align*} 
       where we use the notations of Equation \eqref{eq:gy1yr}, namely
\begin{align*}
X & = \sum_{\substack{i_1 + \dots +i_d \geq 0 \\ i=1, \dots, r}} f^i_{i_1, \dots, i_d}(y_1, \dots,y_n) \, x_1^{i_1} \dots x_d^{i_d}  \,\, X_i ,\\ 
 Y &=  \sum_{\substack{i_1 + \dots +i_d \geq 0 \\ i=1, \dots, r}} \frac{\partial f^i_{i_1, \dots, i_d}(y_1, \dots,y_n)}{\partial y_k} \, x_1^{i_1} \dots x_d^{i_d}  \,\, X_i.  
 \end{align*}
 The classical formula for the differential of the exponential map:
    \begin{equation}
    \label{eq:formalexp}
 e^{-X} \left.\frac{\rmd}{\rmd t} e^{X + tY} \right|_{t=0} = \sum_{n=0}^\infty \frac{1}{(n+1)!} \, \text{ad}_X^n(Y)
\end{equation}
 together with Eq. \eqref{eq:coeffs} implies 
 $$ e^{-X} \left.\frac{\rmd}{\rmd t} e^{X + tY} \right|_{t=0}= \sum_{\substack{i_1 + \dots +i_d \geq 0 \\ i=1, \dots, n}} t^i_{i_1, \dots,i_d}(y_1, \dots,y_n) \, x_1^{i_1} \dots x_d^{i_d}  \,\, X_i  , $$
where for all possible indices
$  t^i_{i_1, \dots,i_d}(y_1, \dots,y_n)$ is a smooth function, since it is given by some polynomial expression using finitely many of the functions $\frac{\partial f^i_{i_1, \dots, i_d}(y_1, \dots,y_n)}{\partial y_i} $ and $ f^i_{i_1, \dots, i_d}(y_1, \dots,y_n)  $, and finitely many of the coefficients that appear in \eqref{eq:coeffs}  and their derivatives. In other words, it lies in 
 the trivial singular foliation on $\cU$ with transverse model $  \mathcal T_0$.
 Now, since $ g(0, \dots,0 )$ is a symmetry of $ \mathcal T_0$, there exists formal functions such that:
\begin{equation}\label{eq:pushforwardg0} g(0, \dots,0 )_*(X_i)= \sum_{j=1}^r   K^i_{i_1, \dots,i_d}(y_1, \dots,y_n) \, x_1^{i_1} \dots x_d^{i_d}  \,\, X_i   \end{equation}
for some smooth functions $ K^i_{i_1, \dots,i_d}(y_1, \dots,y_n)$.
  In turn, this allows to write
$$  g^{-1} \frac{\partial g_y}{\partial y_k} =  \sum_{\substack{i_1 + \dots +i_d \geq 0 \\ i=1, \dots, n}} T^i_{i_1, \dots,i_d}(y_1, \dots,y_n) \, x_1^{i_1} \dots x_d^{i_d}  \,\, X_i  , $$
where for all possible indices, where each
$  T^i_{i_1, \dots,i_d}(y_1, \dots,y_n)$ is a smooth function obtained as a polynomial expression in finitely many of the coefficients $ t^i_{i_1, \dots,i_d}(y_1, \dots,y_n)$ and  $ K^i_{i_1, \dots,i_d}(y_1, \dots,y_n) $.
As a conclusion,  $g^{-1} \frac{\partial g_y}{\partial y_k}$ lies in 
 the trivial singular foliation on $\cU$ with transverse model $  \mathcal T_0$.
This implies that the pushforward of $   \frac{\partial }{\partial y_k}$ lies in 
 the trivial singular foliation on $\cU$ with transverse model $  \mathcal T_0$.

We now have to prove that the   push-forward of each one of the vector fields $  X_1, \dots , X_r$ through $g$ lies in 
 the trivial singular foliation on $\cU$ with transverse model $  \mathcal T_0$.
 The push-forward through $ e^X$ with $X  = \sum_{\substack{i_1 + \dots +i_d \geq 0 \\ i=1, \dots, r}} f^i_{i_1, \dots, i_d}(y_1, \dots,y_n) \, x_1^{i_1} \dots x_d^{i_d}  \,\, X_i $ of $X_i$ coincides with $ e^{\mathrm{ad}_X}(X_i)$. It lies in that singular foliation by arguments similar to those above. Also, by the same arguments of above, using the coefficients that appear in Equation \eqref{eq:pushforwardg0}, so is the push-forward of $e^{\mathrm{ad}_X}(X_i)$  through $ g(0, \dots, 0)$. This completes the proof. 
 \end{proof}
\end{blue}

\subsection{The transverse singular foliation} 
\label{sec:transver}

The usual theory of smooth singular foliations extends with minor adaptations to the formal case.
Most of the following definitions and theorems are obvious extensions of the equivalent results for neighborhood of leaves on singular foliations - references will be given along the way. 
 We will give a brief introduction to the matter, and refer to \cite{LLR} for a more detailed study.
Let $\vertical $ be the normal bundle of a formal neighborhood of $\leaf $, and assume that a tubular neighborhood isomorphism as in Lemma \ref{lem:normalstructure} is chosen.
Let $\pi^* \colon C^\infty(\leaf) \longrightarrow C^{\mathrm{formal}} $ be the induced algebra morphism (which is the pull-back map in the context of Example \ref{decomp:step2.5}). 
We say that a formal vector field $X \in \mathfrak X^{\mathrm{formal}} $ tangent to $ \leaf$ is \emph{$\pi$-compatible} if $$  
\xymatrix{  C^{\mathrm{formal}} \ar[r]^{X}& C^{\mathrm{formal}}  \\ C^\infty(\leaf) \ar[u]^{\pi^*} \ar[r]^{X|_\leaf} & C^\infty(\leaf) \ar[u]^{\pi^*} }  
$$
where $X|_\leaf \in \mathfrak X(\leaf)$ is the restriction of $ X$
 to $ \leaf$ which we may also denote by $\underline{X}$. This gives rise to a $C^\infty(L)$-module which we denote by $\mathfrak{X}_\pi^{\mathrm{formal}}(T)$.
 We call \emph{formal Ehresmann connection} a splitting  $\mathbb{H}$ of the following exact sequence
\begin{equation}\label{FConnAsSplitting0}
\begin{tikzcd}
{\mathrm{Vert}}_\pi \arrow[hook]{r}
&
\mathfrak{X}_\pi^{\mathrm{formal}}(T) \arrow[two heads]{r}
&
\arrow[bend right,swap]{l}{\mathbb H}
\mathfrak{X}(L),
\end{tikzcd}
\end{equation}
with ${\mathrm{Vert}}_\pi$ being the Lie subalgebra of vertical formal vector fields, that is, $X \in \mathfrak{X}^{\mathrm{formal}}$ 
that satisfy $X [\pi^* F] =0 $ for every $F \in C^\infty(L)$, i.e.\ $X \in \mathfrak{X}_\pi^{\mathrm{formal}}(T)$ and $\underline{X} = 0$. 
Ehresmann connections always exist, as we will see shortly below.

 Let $\mathcal F $ be a formal singular foliation along the leaf $\leaf $. 
Let us first define the transverse singular foliation.
\begin{enumerate}
\item formal vector fields in $X \in \mathcal F \cap \mathrm{Vert}_\pi$ shall be called \emph{vertical vector fields in $\mathcal F $} and denoted by $ \mathcal T$. By construction, $\mathcal T$ is a $ C^{\mathrm{formal}}$-module stable under Lie bracket. It is locally finitely generated, so is a singular foliation along $\leaf$ that we call the \emph{transverse singular foliation}.
\item For every $\ell \in L $, any formal vector field in $\mathcal {T}$ restricts to yield a formal vector field on the fibre $ T_\ell$ of the normal bundle. The image of this restriction is a formal singular foliation in the finite-dimensional vector space $ T_\ell$ that we denote by $ (T_\ell, \mathcal {T}_\ell)$. By construction, it is made of vector fields that vanish at $0 \in T_\ell$. We call $(T_\ell, \mathcal T_\ell) $ the \emph{transverse singular foliation at $\ell \in \leaf $}. 
\end{enumerate}

We call \emph{formal $\cF$-connection} a formal Ehresmann connection  $\mathbb{H}$ valued in $\mathcal F $, \emph{i.e.} which splits the following short exact sequence of $C^\infty(L)$-modules
\begin{equation}\label{FConnAsSplitting}
\begin{tikzcd}
\cT \arrow[hook]{r}
&
\cF_\pi \arrow[two heads]{r}
&
\arrow[bend right,swap]{l}{\mathbb H}
\mathfrak{X}(L),
\end{tikzcd}
\end{equation}

\begin{example}
\label{decomp:step6}
\normalfont
This example continues Example \ref{decomp:step4}.
Let $\mathcal F $ be a singular foliation on $M $ admitting $ \leaf$ as an embedded leaf.
Any formal $\mathcal F $-connection for $\leaf $ in the sense of \cite{LGR} (\textit{i.e.}\ a tubular neighborhood of $\leaf $ \& an Ehresmann connection whose sections are in $\mathcal F $) yields a $C^\infty(\leaf) $-linear map $\mathbb H \colon \mathfrak X(\leaf) \to \mathfrak X(M)$ with $(\mathrm{D}\pi \circ \mathbb H)(X) = X$ for all $X \in \mathfrak X(\leaf)$. Its composition  with the Lie algebra morphism $\mathfrak{X}(M) \to \mathfrak{X}^{\mathrm{formal}}$ of Example \ref{decomp:step3} gives a formal $\mathcal F $-connection.
\end{example}

Let us list a few facts, analogous to those valid for smooth singular foliations around embedded leaves \cite{LGR}:
 \begin{enumerate}
 \item around each point of $\leaf $, a  flat formal $\mathcal F $-connection exists; see \cite{meinrenkensplitting} for splitting theorems related to such statements.
\item formal $\mathcal F $-connections exist globally (since one can glue the formal flat ones using partitions of unity). 
\end{enumerate}

The following is an immediate consequence of \eqref{FConnAsSplitting}:

\begin{cor}[Curvature of a formal $\cF$-connection]\label{cor:CurvatureOfFConn}
The curvature of a formal $\cF$-connection $\mathbb{H}$ has values in $\cT$, that is,
\begin{equation*}
\bigl[ \mathbb{H}(X), \mathbb{H}(X') \bigr]
    - \mathbb{H}\bigl( [X, X'] \bigr)
\in \cT.
\end{equation*}
for every $X,X' \in \mathfrak X(L)$
\end{cor}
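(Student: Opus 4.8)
The plan is to verify two facts about the curvature
\[
R(X,X') := \bigl[ \mathbb{H}(X), \mathbb{H}(X') \bigr] - \mathbb{H}\bigl( [X, X'] \bigr),
\]
namely that it lies in $\cF$ and that it is vertical. Since $\cT = \cF \cap \mathrm{Vert}_\pi$ is, by the short exact sequence \eqref{FConnAsSplitting}, precisely the kernel of the surjection $\cF_\pi \twoheadrightarrow \mathfrak{X}(L)$, these two facts together give the claim. Membership in $\cF$ is immediate: by definition $\mathbb{H}$ takes values in $\cF_\pi \subset \cF$, and $\cF$ is stable under the Lie bracket by condition 1) of Definition \ref{def:formalsf}, so both $[\mathbb{H}(X), \mathbb{H}(X')]$ and $\mathbb{H}([X,X'])$ lie in $\cF$, hence so does $R(X,X')$.

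The heart of the matter is verticality, and for this I would first record that the projection $p \colon X \mapsto \underline{X}$ appearing in \eqref{FConnAsSplitting0} is a morphism of Lie algebras on $\pi$-compatible formal vector fields. Indeed, $\pi$-compatibility of $X$ is exactly the identity $X \circ \pi^* = \pi^* \circ \underline{X}$, and applying this twice one computes, for every $f \in C^\infty(L)$,
\[
[X,Y](\pi^* f) = \pi^*\bigl( [\underline{X}, \underline{Y}](f) \bigr),
\]
so that $[X,Y]$ is again $\pi$-compatible with $\underline{[X,Y]} = [\underline{X}, \underline{Y}]$ (using that $\pi^*$ is injective). In particular $\cF_\pi$ is itself closed under the bracket, which re-confirms $[\mathbb{H}(X), \mathbb{H}(X')] \in \cF_\pi$, so that $R(X,X') \in \cF_\pi$ and applying $p$ makes sense. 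Using that $p$ is a bracket morphism together with the splitting condition $p \circ \mathbb{H} = \mathrm{id}_{\mathfrak{X}(L)}$, one then obtains
\[
p\bigl(R(X,X')\bigr) = \bigl[ \underline{\mathbb{H}(X)}, \underline{\mathbb{H}(X')} \bigr] - \underline{\mathbb{H}([X,X'])} = [X, X'] - [X, X'] = 0,
\]
so $R(X,X')$ is vertical. Combined with $R(X,X') \in \cF$, this places it in $\cF \cap \mathrm{Vert}_\pi = \cT$, as required.

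The single step that is not purely tautological is the Leibniz/chain computation showing that the bracket of two $\pi$-compatible formal vector fields is again $\pi$-compatible and projects to the bracket of the projections; everything else is formal manipulation with the defining properties of $\mathbb{H}$ and of the short exact sequence. I would therefore expect that small compatibility verification — carried out in the formal (infinite-jet) setting rather than the smooth one — to be the only point requiring genuine, if routine, care.
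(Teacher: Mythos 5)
Your proof is correct and takes the same route the paper intends: the paper presents the corollary as an immediate consequence of the exact sequence \eqref{FConnAsSplitting}, and your argument --- bracket-stability of $\cF$ giving membership in $\cF$, plus the verification that $X \mapsto \underline{X}$ is a Lie-bracket morphism on $\pi$-compatible fields so that the curvature projects to $[X,X']-[X,X']=0$ and hence lies in the kernel $\cT$ of $\cF_\pi \twoheadrightarrow \mathfrak{X}(L)$ --- is precisely that consequence written out in full. There is no gap; you have simply made explicit the routine compatibility check the paper treats as immediate.
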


Furthermore, any other formal $\cF$-connection $\mathbb{H}'$ is also a splitting of \eqref{FConnAsSplitting}, and thus:

\begin{cor}[Difference between two different formal $\cF$-connections]\label{cor:DiffOfTwoFConn}
Two different formal $\cF$-connections $\mathbb{H}$ and $\mathbb{H}'$  differ by $C^\infty$-linear maps $\mathfrak{X}(L) \to \cT$, that is,
\begin{equation*}
\mathbb{H}(X) - \mathbb{H}'(X) \in \cT
\end{equation*}
for all $X \in \mathfrak{X}(L)$.
\end{cor}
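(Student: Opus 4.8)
The plan is to observe that the statement is an immediate consequence of the fact that both $\mathbb{H}$ and $\mathbb{H}'$ are $C^\infty(L)$-linear splittings of \emph{the same} short exact sequence \eqref{FConnAsSplitting}. Recall that the surjection in that sequence, $\cF_\pi \twoheadrightarrow \mathfrak{X}(L)$, is the restriction-to-the-leaf map $X \mapsto \underline{X}$, and that by the construction of \eqref{FConnAsSplitting} its kernel is precisely $\cT$, the module of vertical vector fields lying in $\cF$.

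First I would unpack the splitting condition: saying that $\mathbb{H}$ splits the sequence means exactly that the composite $\mathfrak{X}(L) \xrightarrow{\mathbb{H}} \cF_\pi \twoheadrightarrow \mathfrak{X}(L)$ is the identity, that is $\underline{\mathbb{H}(X)} = X$ for every $X \in \mathfrak{X}(L)$, and likewise $\underline{\mathbb{H}'(X)} = X$. Applying the surjection to the difference then gives
\begin{equation*}
\underline{\mathbb{H}(X) - \mathbb{H}'(X)} = X - X = 0,
\end{equation*}
so that $\mathbb{H}(X) - \mathbb{H}'(X)$ lies in the kernel of the surjection, which has been identified with $\cT$. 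This yields the desired membership $\mathbb{H}(X) - \mathbb{H}'(X) \in \cT$ for every $X \in \mathfrak{X}(L)$.

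Finally, for the $C^\infty(L)$-linearity clause I would note that both $\mathbb{H}$ and $\mathbb{H}'$ are $C^\infty(L)$-linear by the very definition of a formal $\cF$-connection (they are splittings in the category of $C^\infty(L)$-modules), hence so is their difference $X \mapsto \mathbb{H}(X) - \mathbb{H}'(X)$, now legitimately regarded as a map $\mathfrak{X}(L) \to \cT$ since its image lands in $\cT$ by the previous step. There is in fact no genuine obstacle here: the whole content is the standard homological observation that two sections of one and the same surjection differ by a morphism into its kernel; the only point worth stating explicitly is the identification of that kernel with the transverse foliation $\cT$, which is exactly what exact sequence \eqref{FConnAsSplitting} records.
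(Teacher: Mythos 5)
Your proposal is correct and is essentially the paper's own argument: the paper derives the corollary as an immediate consequence of the fact that $\mathbb{H}$ and $\mathbb{H}'$ are both splittings of the exact sequence \eqref{FConnAsSplitting}, whose kernel is $\cT$, which is exactly the observation you spell out. The extra detail you give (applying the surjection $X \mapsto \underline{X}$ to the difference and invoking $C^\infty(L)$-linearity of both splittings) is just an explicit unpacking of that same one-line reasoning.
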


For every formal Ehresmann connection with respect to the  tubular neighborhood $\vertical$, parallel lifts can be defined, \textit{i.e.}\ for any path $ t \mapsto \gamma(t)$ on $L$ with $ \gamma(0)=\ell$ and $\gamma(1)=\ell' $, one can associate a formal diffeomorphism $ \mathrm{PT}_{\gamma}$ from $T_\ell $ to $T_{\ell'} $.
Exactly as for usual $ \mathcal F$-connections (see also \cite{LLR}), it follows for formal $\mathcal F $-connections from Corollaries \ref{cor:CurvatureOfFConn} and \ref{cor:DiffOfTwoFConn} above that the following three properties, that we shall denote by "$\mathcal P_A$", "$\mathcal P_B$" and "$\mathcal P_C$", hold true. They are the equivalent to the Ambrose-Singer theorem.

\begin{cor}
\label{cor:PAPBPC}
Choose a formal $\mathcal F $-connection with corresponding parallel transport denoted by $\mathrm{PT}$ on a singular foliation $\mathcal F $ along the leaf~$\leaf $.
\begin{enumerate}
\item["$\mathcal P_A$"] 
For any path $\gamma $ on $\leaf $ from $ \ell$ to $ \ell'$, $\mathrm{PT}_{\gamma}$  maps $ \mathcal T_\ell$ to $\mathcal T_{\ell'} $. Therefore, for any loop starting at $ \ell \in \leaf$,
 $\mathrm{PT}_{\gamma}$ is an element of $\mathrm{Sym}(\mathcal T_\ell) $.
\item["$\mathcal P_B$"] For any two homotopic paths $ \gamma_0, \gamma_1$ from $\ell$ to $\ell'$ with respect to some formal $\mathcal F $-connection, 
 $ \mathrm{PT}_{\gamma_0}$ and $\mathrm{PT}_{\gamma_1}$ differ by an inner symmetry, \textit{i.e.}\
$$ \mathrm{PT}_{\gamma_1}^{-1}  \circ \mathrm{PT}_{\gamma_0} \in {\mathrm{Inner}} (\mathcal T_\ell).$$
\item["$\mathcal P_C$"] For two different formal $\cF$-connections, and any path $\gamma $ on $\leaf $, their respective parallel transportation also differ by an inner symmetry. 
\end{enumerate}
\end{cor}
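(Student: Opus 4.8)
The plan is to reduce all three properties to two features of the horizontal lift $\mathbb{H}$: that it takes values in $\cF$, and the bracket/difference statements of Corollaries \ref{cor:CurvatureOfFConn} and \ref{cor:DiffOfTwoFConn}. Throughout I regard the parallel transport $\mathrm{PT}_\gamma \colon T_\ell \to T_{\ell'}$ as the time-$1$ flow of the time-dependent horizontal vector field $t \mapsto \mathbb{H}(\dot\gamma(t))$, which is meaningful by the $C^\infty(L)$-linearity of $\mathbb{H}$. For "$\mathcal P_A$" the first step is the elementary identity $[\mathbb{H}(X), \cT] \subset \cT$ for all $X \in \mathfrak X(L)$: for $V \in \cT$ both $\mathbb{H}(X)$ and $V$ lie in $\cF$, so $[\mathbb{H}(X), V] \in \cF$; and since $\mathbb{H}$ splits \eqref{FConnAsSplitting} we have $\underline{\mathbb{H}(X)} = X$ and $\underline{V} = 0$, whence $\underline{[\mathbb{H}(X),V]} = [X,0] = 0$, so $[\mathbb{H}(X),V]$ is vertical and thus lies in $\cT$. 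Hence each horizontal lift is an infinitesimal symmetry of the total transverse foliation $\cT$, so its flow preserves $\cT$ as a $C^{\mathrm{formal}}$-module. Since the flow maps $T_\ell$ to $T_{\ell'}$, fibrewise restriction yields $(\mathrm{PT}_\gamma)_*(\cT_\ell) \subseteq \cT_{\ell'}$, and equality follows by applying this to $\gamma^{-1}$. For a loop this gives $\mathrm{PT}_\gamma \in \mathrm{Sym}(\cT_\ell)$.

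For "$\mathcal P_B$" I would use the standard first-variation formula for parallel transport. Given a homotopy $H \colon [0,1]^2 \to L$ from $\gamma_0$ to $\gamma_1$ with fixed endpoints, set $\Psi_u := \mathrm{PT}_{H(\cdot,u)}^{-1} \circ \mathrm{PT}_{H(\cdot,0)} \in \mathrm{Diff}^{\mathrm{formal}}(T_\ell,0)$, so that $\Psi_0 = \mathrm{id}$. The logarithmic derivative $\tfrac{d}{du}\Psi_u \circ \Psi_u^{-1}$ equals the integral along $H(\cdot,u)$ of the curvature of $\mathbb{H}$ conjugated by intermediate parallel transports. By Corollary \ref{cor:CurvatureOfFConn} the curvature lies in $\cT$, and by "$\mathcal P_A$" conjugation by parallel transport preserves $\cT$; hence this generator is a $\cT_\ell$-valued time-dependent vector field. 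Consequently $u \mapsto \Psi_u$ is a path in $\mathrm{Inner}(\cT_\ell)$ issued from the identity, so $\Psi_1 = \mathrm{PT}_{\gamma_1}^{-1} \circ \mathrm{PT}_{\gamma_0} \in \mathrm{Inner}(\cT_\ell)$.

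For "$\mathcal P_C$" I would interpolate between the two $\cF$-connections $\mathbb{H}, \mathbb{H}'$ by the convex combination $\mathbb{H}_u := (1-u)\mathbb{H} + u\mathbb{H}'$, which is again an $\cF$-connection (a $\cF$-valued splitting of \eqref{FConnAsSplitting}) and satisfies $\tfrac{d}{du}\mathbb{H}_u = \mathbb{H}' - \mathbb{H} \in \cT$ by Corollary \ref{cor:DiffOfTwoFConn}. Writing $\Psi_u := (\mathrm{PT}^u_\gamma)^{-1} \circ \mathrm{PT}^0_\gamma$ and applying the variation formula now in the connection parameter, $\tfrac{d}{du}\Psi_u \circ \Psi_u^{-1}$ is the integral along $\gamma$ of the $\cT$-valued difference $\mathbb{H}' - \mathbb{H}$ conjugated by parallel transport, hence again $\cT_\ell$-valued. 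As before $u \mapsto \Psi_u$ stays in one $\mathrm{Inner}(\cT_\ell)$-coset starting from the identity, giving $\Psi_1 \in \mathrm{Inner}(\cT_\ell)$.

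The main obstacle is the inference used three times: a path of formal diffeomorphisms fixing $0$ whose logarithmic derivative is a $\cT_\ell$-valued time-dependent formal vector field has its two endpoints differing by an element of $\mathrm{Inner}(\cT_\ell)$. In the formal/diffeological setting this amounts to showing that the time-ordered exponential of a $\cT_\ell$-valued path lands in the group generated by $\exp(\cT_\ell)$. I would prove this level by level: working modulo $(\cT_0)_{\geq k}$ for each $k$, Lemma \ref{lem:exp2} together with the compatibility of the Baker--Campbell--Hausdorff formula with the diffeology turns the time-ordered exponential into a genuine finite-dimensional product of exponentials lying in $\mathrm{Inner}(\cT_\ell)$, and one then passes to the limit in the valuation filtration. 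The remaining technical point is to justify the two variation formulas in the formal category — that flows of formal vector fields depend diffeologically smoothly on the path and on the connection, and satisfy the usual holonomy-variation identities — but this is routine since everything converges order by order in the filtration topology.
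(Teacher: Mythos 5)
Your skeleton is the right one, and it is the one the paper intends: the paper gives no detailed proof, merely asserting that the three properties follow from Corollaries \ref{cor:CurvatureOfFConn} and \ref{cor:DiffOfTwoFConn} "exactly as for usual $\mathcal F$-connections", and your bracket computation $[\mathbb{H}(X),\cT]\subset\cT$, the holonomy-variation formula for $\mathcal P_B$, and the convex interpolation of splittings for $\mathcal P_C$ are precisely that Ambrose--Singer-type argument. The genuine gap is the step you dismiss as routine at the end. Your variation formulas produce time-ordered exponentials of $\cT_\ell$-valued time-dependent fields, whereas $\mathrm{Inner}(\cT_\ell)$ is by definition the group of \emph{finite} products of exponentials of elements of $\cT_\ell$; bridging these two is the entire content of $\mathcal P_B$ and $\mathcal P_C$, and your limit argument does not achieve it. Working modulo $(\cT_0)_{\geq k}$ does write $\Psi_1$ as a finite product of exponentials up to an error lying in $\mathrm{Diff}^{\mathrm{formal}}(\mathbb R^d,0)_{\geq k}$, but Lemma \ref{lem:exp2} only identifies $\exp\mleft((\cT_0)_{\geq k}\mright)$ with $\mathrm{Inner}(\cT_0)_{\geq k}$; it says nothing about an error that is merely jet-trivial to order $k$, and such an error need not lie in $\mathrm{Inner}(\cT_0)_{\geq k}$, nor be the exponential of any high-valuation element of $\cT_0$. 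Consequently the finite products obtained at successive levels cannot in general be chosen to differ by exponentials of increasing valuation, the BCH series of the corrections need not converge to a single exponential, and "passing to the limit" only places $\Psi_1$ in the closure of $\mathrm{Inner}(\cT_\ell)$ in the filtration topology --- a group that is not known (and nowhere claimed in the paper) to coincide with $\mathrm{Inner}(\cT_\ell)$ itself.

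The paper's own spiral example shows concretely why order-control fails (Remark \ref{rem:concentric}): for $\cT_0$ generated by $X=x\partial_y-y\partial_x+(x^2+y^2)(x\partial_x+y\partial_y)$, the element $\exp(2\pi X)$ coincides with the identity up to order $3$, yet its class in $\mathrm{Inner}(\cT_0)/\mathrm{Inner}(\cT_0)_{\geq 2}\simeq\mathbb R$ is nonzero, so it lies in no $\mathrm{Inner}(\cT_0)_{\geq k}$ with $k\geq 2$ and is not $\exp$ of any element of $(\cT_0)_{\geq 2}$. Thus jet-level agreement gives no control on the generators of the correcting factors, which is exactly what your convergence scheme requires. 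To close the gap you must actually prove that time-one flows of time-dependent $\cT_\ell$-valued vector fields belong to the group generated by time-independent exponentials (this is the point at which the paper leans on the smooth theory of \cite{LGR} and \cite{LLR}), or else run the whole argument with the a priori larger group of such flows and then identify it with $\mathrm{Inner}(\cT_\ell)$ by a separate theorem. A milder instance of the same issue appears in your $\mathcal P_A$: the inference from $[\mathbb{H}(X),\cT]\subset\cT$ to "the flow preserves $\cT$" is not formal nonsense but uses local finite generation of $\cT$, i.e.\ the "non-obvious theorem" the paper cites from \cite{AS,Hermann,LLR}, and should be invoked as such.
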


If we worked with local singular foliations, defined near $ L$, with completeformal $\cF$-connections as in \cite{LGR}, one could depict $\mathcal P_A,\mathcal P_B $ in this corollary as in Fig.\ \ref{fig:ParallelTransportChangingCircles}; also recall Ex.\ \ref{ex:ConcentricCirclesInnAndOut}.

\begin{figure}[ht]
    \centering
    \includegraphics[width=\textwidth]{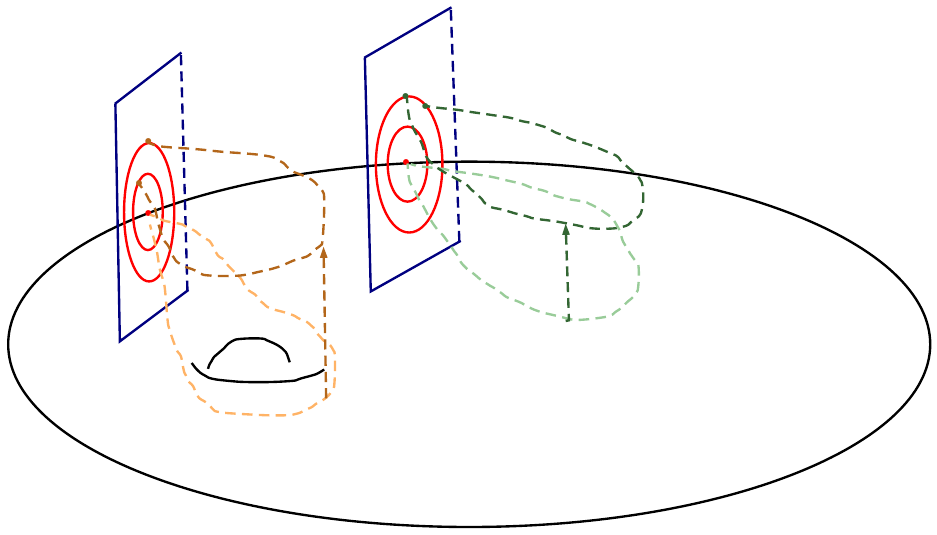}
    \caption{Parallel transport along loops following a foliation $\cF$ with concentric circle as transverse model.
    Whenever the loop is contractible, each horizontal lifts ends in the  circle it started from. Whenever the loop is not contractible, its horizontal lifts might end up in a different circle.}
    \label{fig:ParallelTransportChangingCircles}
\end{figure}

Property "$\mathcal P_A$" implies in particular the following statement:

\begin{prop}[Transverse model singular foliations are isomorphic]\label{prop:IsomorphicTransverseFoliations}
Let $\mathcal F $ be a formal singular foliation along a leaf $\leaf $. 
For any two choices of a formal tubular neighborhood, and any two points $\ell, \ell' $ in $\leaf $, the transverse singular foliations $(T_\ell, \mathcal T_\ell) $ and $(T_{\ell'}, \mathcal T_{\ell'}) $ are isomorphic. 
\end{prop}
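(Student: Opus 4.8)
The plan is to prove that any two transverse singular foliations $(T_\ell, \mathcal T_\ell)$ and $(T_{\ell'}, \mathcal T_{\ell'})$ are isomorphic by exhibiting an explicit formal diffeomorphism intertwining them, built from parallel transport along a connecting path. First I would fix an arbitrary formal $\mathcal F$-connection, which exists by the facts listed after Equation \eqref{FConnAsSplitting}, together with the two choices of formal tubular neighborhood. Since the leaf $\leaf$ is connected (being a leaf of a singular foliation, hence path-connected as an immersed submanifold), I may choose a smooth path $\gamma$ on $\leaf$ from $\ell$ to $\ell'$. The formal Ehresmann connection underlying $\mathbb H$ then yields, as recalled in the paragraph preceding "$\mathcal P_A$", a formal diffeomorphism $\mathrm{PT}_\gamma \colon T_\ell \to T_{\ell'}$ via parallel transport.

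The key step is then simply to invoke property "$\mathcal P_A$" from Corollary \ref{cor:PAPBPC}: it states precisely that $\mathrm{PT}_\gamma$ maps $\mathcal T_\ell$ to $\mathcal T_{\ell'}$. Hence $\mathrm{PT}_\gamma$ is a formal diffeomorphism carrying one transverse foliation onto the other, which is exactly an isomorphism of the pairs $(T_\ell, \mathcal T_\ell)$ and $(T_{\ell'}, \mathcal T_{\ell'})$. This handles the case where both transverse foliations are computed with respect to the \emph{same} tubular neighborhood.

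To address the stronger claim allowing \emph{two different} choices of formal tubular neighborhood, I would argue in two stages. For a single fixed tubular neighborhood, the path-connectedness of $\leaf$ plus property "$\mathcal P_A$" gives the isomorphism between different basepoints, as above. For different tubular neighborhoods at the \emph{same} point $\ell$, I would observe that changing the tubular neighborhood isomorphism (Lemma \ref{lem:normalstructure}) changes the restriction of $\mathcal T$ to $T_\ell$ by the action of a formal diffeomorphism of $T_\ell$ fixing $0$; since $\mathcal T$ is an intrinsically defined $C^{\mathrm{formal}}$-module of vertical vector fields, the two restricted foliations on $T_\ell$ differ precisely by such a formal diffeomorphism, hence are isomorphic. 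Composing these two isomorphisms (change of basepoint along $\gamma$, and change of tubular neighborhood at a fixed point) yields the full statement.

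The main obstacle I anticipate is the bookkeeping in the second stage: one must check that the vertical vector fields $\mathcal T = \mathcal F \cap \mathrm{Vert}_\pi$ do genuinely depend on the tubular neighborhood only up to a formal diffeomorphism on each fiber, and that this dependence is consistent with the connection-based transport used for the basepoint change. In practice this reduces to noting that two tubular neighborhood isomorphisms differ by a filtered-algebra automorphism restricting to $\mathrm{id}$ on $C^\infty(\leaf)$, whose fiberwise effect is a formal diffeomorphism fixing $0$; this is routine but is the one place where genuine verification, rather than a direct appeal to "$\mathcal P_A$", is required. Everything else is a clean application of the Ambrose--Singer-type Corollary \ref{cor:PAPBPC}.
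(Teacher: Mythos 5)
Your first stage is exactly the paper's own proof: the paper states this proposition as an immediate consequence of property ``$\mathcal P_A$'' of Corollary \ref{cor:PAPBPC}, i.e.\ connectedness of $L$ plus the parallel transport $\mathrm{PT}_\gamma$ of a formal $\mathcal F$-connection along a path joining $\ell$ to $\ell'$. For a \emph{fixed} formal tubular neighborhood, your argument is complete and matches the paper line by line.

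Your second stage, however, contains a genuine gap. You claim that two formal tubular neighborhoods differ by a filtered-algebra automorphism ``whose fiberwise effect is a formal diffeomorphism of $T_\ell$ fixing $0$''. Such an automorphism $\Psi$ induces the identity only on the \emph{quotient} $C^{\mathrm{formal}}/C^{\mathrm{formal}}_{\geq 1} \simeq C^\infty(L)$; it does \emph{not} preserve the embedded subalgebra $\pi^*C^\infty(L)$ --- the difference between the two embeddings of $C^\infty(L)$ into $C^{\mathrm{formal}}$ is precisely what distinguishes the two tubular neighborhoods. Consequently $\Psi$ need not preserve the fibration at all: it sends the ideal cutting out the fiber $T_\ell$ (the ideal generated by $\pi^*$ of functions vanishing at $\ell$) to the ideal of a \emph{different} formal transversal through $\ell$, and it sends $\mathrm{Vert}_\pi$ to a different vertical distribution. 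So there is no well-defined ``fiberwise effect'', and the two transverse foliations at $\ell$ arise by intersecting $\mathcal F$ with two different verticality conditions; nothing in your bookkeeping relates them. What is actually needed is the formal analogue of the statement that the foliation induced on a transversal does not depend on the transversal, and that is not routine: it is essentially the formal splitting theorem, available in this paper through the local existence of flat formal $\mathcal F$-connections (Section \ref{sec:transver}), or, alternatively, one folds the tubular neighborhood into the data of an $\mathcal F$-connection as in Example \ref{decomp:step6}, so that the Ambrose--Singer-type properties of Corollary \ref{cor:PAPBPC} quantify over that choice as well. (To be fair, the paper's own one-line justification is equally silent on this point; but since your write-up explicitly isolates this verification and then declares it routine, that declaration is where the proof breaks.)
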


This allows to make sense of the following definition.

\begin{deff}[Transverse models of a formal singular foliation]
We call a \emph{transverse model of a formal singular foliation along a leaf $\leaf $} a singular foliation $\mathcal{T}_0$ near $0 \subset \mathbb R^d$ which is formally isomorphic to the transverse singular foliation $(T_\ell, \mathcal T_\ell) $ for one (equivalently all) $l \in \leaf$, computed with respect to an arbitrary formal tubular neighborhood. 
\end{deff}

 \subsection{The  Yang-Mills groupoid}
 \label{sec:YMgroupoid}
\begin{blue}
Corollary \ref{cor:PAPBPC} allows to construct an important transitive groupoid (inspired by a similar construction by Meinrenken \cite{meinrenken2021integration}): The Yang-Mills groupoid below. It has an important interpretation in terms of Yang-Mills connections. For this choose a tubular neighborhood $\vertical$, and let $(T_\ell,\mathcal T_\ell)_{\ell \in \leaf} $ be the induced family of transverse singular foliations as in Section \ref{sec:transver}.
Let $(\mathbb R^d, \mathcal T_0) $ be a transverse model and recall that a 
formal $\mathcal F $-connection equips $T \to L $  with a parallel transport valued in formal diffeomorphisms $$\mathrm{PT}_{\gamma} \colon T_{\ell _0}\to T_{\ell_1} $$ for any path $\gamma $ in $ L$ from $\ell_{0} $ to $\ell_{1} $ and mapping $\mathcal T_{\ell_0} $ to $\mathcal T_{\ell_1} $.
 
Let us call \emph{frame groupoid} the groupoid ${\mathrm{Frame}}(\mathcal F) $ over $\leaf $ whose arrows between two arbitrary points $\ell,\ell'  \in \leaf$
 are made of all formal diffeomorphisms from $\normal_\ell $ to $\normal_{\ell'} $ mapping $\mathcal T_\ell $ to $\mathcal T_{\ell'} $. Proposition \ref{prop:IsomorphicTransverseFoliations} translates as follows.

 \begin{lemma}\label{lem:frame}
The frame groupoid ${\mathrm{Frame}}(\mathcal F) \rightrightarrows\leaf $ of a formal singular foliation along a leaf $ \leaf$ is a transitive diffeological groupoid. 
\end{lemma}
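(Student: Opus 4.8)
The plan is to unwind the definition of transitivity and reduce it directly to Proposition \ref{prop:IsomorphicTransverseFoliations}. A groupoid $\Gamma \rightrightarrows \leaf$ is transitive precisely when, for every ordered pair of units $\ell, \ell' \in \leaf$, the arrow set $\Gamma_\ell^{\ell'}$ is nonempty. By the very definition of $\mathrm{Frame}(\mathcal F)$, an arrow from $\ell$ to $\ell'$ is a formal diffeomorphism $\normal_\ell \to \normal_{\ell'}$ carrying $\mathcal T_\ell$ to $\mathcal T_{\ell'}$, that is, an isomorphism of the two transverse singular foliations $(\normal_\ell, \mathcal T_\ell)$ and $(\normal_{\ell'}, \mathcal T_{\ell'})$. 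Thus transitivity is literally the assertion that any two fibers of the transverse singular foliation are isomorphic, which is exactly the content of Proposition \ref{prop:IsomorphicTransverseFoliations}.

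To make the argument self-contained rather than a bare citation, I would exhibit such an arrow explicitly. Since $\leaf$ is a connected leaf, fix a piecewise smooth path $\gamma$ from $\ell$ to $\ell'$, and fix a formal $\mathcal F$-connection, one of which exists globally by the facts listed after Example \ref{decomp:step6}. The associated parallel transport $\mathrm{PT}_{\gamma} \colon \normal_\ell \to \normal_{\ell'}$ is a formal diffeomorphism, and by property ``$\mathcal P_A$'' of Corollary \ref{cor:PAPBPC} it maps $\mathcal T_\ell$ to $\mathcal T_{\ell'}$. Hence $\mathrm{PT}_{\gamma} \in \mathrm{Frame}(\mathcal F)_\ell^{\ell'}$, so this arrow set is nonempty. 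As $\ell$ and $\ell'$ were arbitrary, $\mathrm{Frame}(\mathcal F) \rightrightarrows \leaf$ is transitive.

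I do not expect any genuine obstacle here: the whole mathematical substance has already been established, either as Proposition \ref{prop:IsomorphicTransverseFoliations} itself or, at a finer level, as property ``$\mathcal P_A$'' combined with the existence of $\mathcal F$-connections and the connectedness of $\leaf$. The lemma only repackages that substance in groupoid language. The only point worth making sure of is that the working definition of ``transitive groupoid'' is precisely ``every arrow set $\Gamma_\ell^{\ell'}$ between two units is nonempty''; once that is granted, the proof is a one-line translation, and the explicit $\mathrm{PT}_{\gamma}$ description above is included merely for concreteness.
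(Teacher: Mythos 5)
Your proof is correct and is essentially the paper's own argument: the paper introduces the frame groupoid and states the lemma as a direct translation of Proposition \ref{prop:IsomorphicTransverseFoliations} into groupoid language, which is exactly your reduction. Your explicit construction of an arrow via $\mathrm{PT}_\gamma$ and property "$\mathcal P_A$" merely unfolds how that proposition was itself obtained, so it adds concreteness without changing the route.
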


The Yang-Mills groupoid is a sub-groupoid of the frame groupoid  ${\mathrm{Frame}}(\mathcal F) \rightrightarrows\leaf   $ that we now construct. 
Consider for all $\ell_0,\ell_1  \in \leaf$ the subset
  $$  \mathrm{YM}_{\ell_0}^{\ell_1}  \subset {\mathrm{Frame}}(\mathcal F)_{\ell_0}^{\ell_1} $$
made of all formal diffeomorphisms from $(\normal_{\ell_0}, \mathcal T_{\ell_0}) $ to $(\normal_{\ell_1}, \mathcal T_{\ell_1}) $
which are a composition of 
\begin{enumerate}
\item parallel transportation  ${\mathrm{PT}}_\gamma  $ using a formal $\cF$-connection, where $\gamma  $ is in the set $\mathrm{Path}_{\ell_0}^{\ell_1}$ of paths from $\ell_0 $ to $ \ell_1$,
\item an inner symmetry of  $({\normal}_{\ell_1}, \mathcal T_{\ell_1}) $, \textit{i.e.}\ an element in the group bundle $\mathrm{Inner}(\mathcal T_{\ell_1}) $. 
\end{enumerate}
In equation:
\begin{equation}
\label{eq:YMgroupoidDef}
\mathrm{YM}_{\ell_0}^{\ell_1}  =  \left\{ \phi \circ {\mathrm{PT}}_\gamma  \middle| \gamma \in  {\mathrm{Path}}_{\ell_0}^{\ell_1} , \phi \in {\mathrm{Inner}}(\mathcal T_{\ell_1})\right\} .
\end{equation}

\begin{cor}
\label{cor:YMgroupoid}
$\mathrm{YM}(\mathcal F)  = \cup_{\ell_0,\ell_1 \in \leaf}  \mathrm{YM}_{\ell_0}^{\ell_1} $ is a transitive subgroupoid of ${\mathrm{Frame}}(\mathcal F) \rightrightarrows\leaf  $ that we call the Yang-Mills groupoid.

It does not depend on the choice of a formal $\mathcal F $-connection.
\end{cor}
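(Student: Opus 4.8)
The plan is to verify directly the three groupoid axioms—units, composition, and inverses—using the conjugation formula recorded just before Theorem \ref{thm:isYM} (an incarnation of property $\mathcal P_A$) together with the functoriality $\mathrm{PT}_{\gamma_1 \cdot \gamma_0} = \mathrm{PT}_{\gamma_1} \circ \mathrm{PT}_{\gamma_0}$ of parallel transport under concatenation of paths. First, the unit at $\ell$ lies in $\mathrm{YM}_\ell^\ell$: take the constant path, whose parallel transport is the identity, together with $\phi = \mathrm{id} \in \mathrm{Inner}(\mathcal T_\ell)$. Moreover, since every element of $\mathrm{YM}_{\ell_0}^{\ell_1}$ is by construction a frame, \textit{i.e.}\ a formal diffeomorphism carrying $\mathcal T_{\ell_0}$ to $\mathcal T_{\ell_1}$, the inclusion $\mathrm{YM}(\mathcal F) \subset {\mathrm{Frame}}(\mathcal F)$ holds automatically.

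For composition, I would take $\phi_0 \circ \mathrm{PT}_{\gamma_0} \in \mathrm{YM}_{\ell_0}^{\ell_1}$ and $\phi_1 \circ \mathrm{PT}_{\gamma_1} \in \mathrm{YM}_{\ell_1}^{\ell_2}$ and compute
\[
(\phi_1 \circ \mathrm{PT}_{\gamma_1}) \circ (\phi_0 \circ \mathrm{PT}_{\gamma_0})
= \phi_1 \circ \bigl(\mathrm{PT}_{\gamma_1} \circ \phi_0 \circ \mathrm{PT}_{\gamma_1}^{-1}\bigr) \circ \mathrm{PT}_{\gamma_1} \circ \mathrm{PT}_{\gamma_0}.
\]
The conjugation formula of $\mathcal P_A$ shows $\mathrm{PT}_{\gamma_1} \circ \phi_0 \circ \mathrm{PT}_{\gamma_1}^{-1} \in \mathrm{Inner}(\mathcal T_{\ell_2})$, so the prefactor $\phi_1 \circ (\mathrm{PT}_{\gamma_1} \circ \phi_0 \circ \mathrm{PT}_{\gamma_1}^{-1})$ lies in the group $\mathrm{Inner}(\mathcal T_{\ell_2})$, while $\mathrm{PT}_{\gamma_1} \circ \mathrm{PT}_{\gamma_0} = \mathrm{PT}_{\gamma_1 \cdot \gamma_0}$; hence the composite lies in $\mathrm{YM}_{\ell_0}^{\ell_2}$. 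For inverses, the same conjugation trick applied to
\[
(\phi_0 \circ \mathrm{PT}_{\gamma_0})^{-1}
= \bigl(\mathrm{PT}_{\gamma_0}^{-1} \circ \phi_0^{-1} \circ \mathrm{PT}_{\gamma_0}\bigr) \circ \mathrm{PT}_{\gamma_0}^{-1},
\]
together with $\mathrm{PT}_{\gamma_0}^{-1} = \mathrm{PT}_{\bar\gamma_0}$ for the reversed path $\bar\gamma_0$, places the inverse in $\mathrm{YM}_{\ell_1}^{\ell_0}$. Transitivity is then immediate: as $\leaf$ is connected, any two points are joined by a path $\gamma$, and $\mathrm{PT}_\gamma \in \mathrm{YM}_{\ell_0}^{\ell_1}$ (take $\phi = \mathrm{id}$), so each $\mathrm{YM}_{\ell_0}^{\ell_1}$ is non-empty; this recovers and refines Lemma \ref{lem:frame}.

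For independence of the $\mathcal F$-connection, I would invoke property $\mathcal P_C$: two $\mathcal F$-connections yield parallel transports $\mathrm{PT}_\gamma$ and $\mathrm{PT}'_\gamma$ with $\mathrm{PT}_\gamma = \chi \circ \mathrm{PT}'_\gamma$ for some $\chi \in \mathrm{Inner}(\mathcal T_{\ell_1})$. Then $\phi \circ \mathrm{PT}_\gamma = (\phi \circ \chi) \circ \mathrm{PT}'_\gamma$ with $\phi \circ \chi \in \mathrm{Inner}(\mathcal T_{\ell_1})$, and running this identity in both directions shows that the two defining sets in \eqref{eq:YMgroupoidDef} coincide, so $\mathrm{YM}(\mathcal F)$ is independent of the chosen connection. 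None of these steps is genuinely hard; the one delicate point I would take care to get right is the bookkeeping of source and target fibers in the conjugation formula of $\mathcal P_A$—namely, that conjugation by $\mathrm{PT}_{\gamma_1}$ sends $\mathrm{Inner}(\mathcal T_{\ell_1})$ into $\mathrm{Inner}(\mathcal T_{\ell_2})$ and not the reverse—since this is exactly what makes the inner-symmetry factor land on the correct (target) side and thereby renders $\mathrm{YM}_{\ell_0}^{\ell_1}$ stable under composition.
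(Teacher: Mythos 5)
Your proof is correct and takes essentially the same route as the paper's: closure under composition and under inverses both rest on the $\mathcal P_A$-conjugation isomorphism between the inner symmetry groups at the two endpoints, combined with functoriality of parallel transport under concatenation and reversal of paths, and independence of the $\mathcal F$-connection is exactly the paper's appeal to $\mathcal P_C$. The only cosmetic difference is that the paper establishes stability under inversion by first rewriting $\mathrm{YM}_{\ell_0}^{\ell_1}$ with the inner factor on the source side, which is the same conjugation trick you apply directly to $(\phi_0 \circ \mathrm{PT}_{\gamma_0})^{-1}$.
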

\begin{proof}
It follows from property "$\mathcal P_A$" (see Corollary \ref{cor:PAPBPC}) that such a composition is indeed a formal diffeomorphism from $(\normal_{\ell_0}, \mathcal T_{\ell_0}) $ to $(\normal_{\ell_1}, \mathcal T_{\ell_1}) $, \textit{i.e.}\ is a subset of ${\mathrm{Frame}}(\mathcal F)_{\ell_0}^{\ell_1} $. Property "$\mathcal P_A$" also implies that
  $$
   \begin{array}{rcl} {\mathrm{Inner}}(\mathcal T_{\ell_0}) &\longrightarrow & {\mathrm{Inner}}(\mathcal T_{\ell_1})
   \\
   \phi & \mapsto & {\mathrm{PT}}_\gamma \circ \phi \circ   {\mathrm{PT}}_\gamma^{-1}.
   \end{array}
   $$
   is a group isomorphism. As a consequence,
 $$  \mathrm{YM}_{\ell_0}^{\ell_1}  =  \left\{ {\mathrm{PT}}_\gamma \circ \phi \, \middle| \, \gamma \in  {\mathrm{Path}}_{\ell_0}^{\ell_1} , \phi \in {\mathrm{Inner}}(\mathcal T_{\ell_0})\right\} $$
 and this implies that
  $  \mathrm{YM}(\mathcal F)  \subset {\mathrm{Frame}}(\mathcal F) $
is stable under the inverse map. Stability under product is a direct consequence of the formula
$$  \phi \circ {\mathrm{PT}}_{\gamma_1} \circ \psi \circ {\mathrm{PT}}_{\gamma_2}  = \phi \circ  \left({\mathrm{PT}}_{\gamma_1} \circ \psi \circ \left({\mathrm{PT}}_{\gamma_1}\right)^{-1}\right)  \circ {\mathrm{PT}}_{\gamma_1 \star \gamma_2}   $$
valid for any two paths $\gamma_1,\gamma_2 $ such that $\gamma_1(0)=\gamma_2(1) $, and any $\phi \in \mathcal T_{\gamma_1(1)},\psi \in  \mathcal T_{\gamma_1(0)} $ with the understanding that $ \gamma_1 \star \gamma_2$  is the concatenation of these two paths, starting with $\gamma_2$ followed by $\gamma_1$.

Property "$\mathcal P_C$" implies that the groupoid $\mathrm{YM} $ does not depend on the choice of a particular formal $ \mathcal F$-connection. 
\end{proof}
\end{blue}

\begin{rem}
\label{rem:holonomygroupoid}
One can define the \emph{holonomy groupoid} of a formal singular foliation $\mathcal F $ along a leaf $L$ as the groupoid obtained by taking the quotient of the Yang-Mills groupoid by the connected group subbundle of its isotropy group whose Lie algebra is $ I_\ell \mathcal T_\ell$ for every $ \ell \in L$, with $I_\ell $ the ideal of formal functions on the fiber $T_\ell$ that vanish at $ \ell$. A comparison with the construction in  \cite{AZ13,GV} of the holonomy groupoid of a leaf (as defined by Androulidakis-Skandalis \cite{AS})  justifies this definition, and shows that when the formal singular foliation comes from a smooth singular foliation as in Example \ref{decomp:step5}, there is a surjective groupoid morphism from the holonomy groupoid of $L$  to the holonomy groupoid as defined above. It is not bijective in general: for instance for the singular foliation on $ \mathbb R^2$ generated by
 $$ x \partial_y-y\partial_x +e^{-\frac{1}{x^2+y^2}} (x \partial_x +y \partial_y)  $$
 the Androulidakis-Skandalis holonomy groupoid for the leaf $\{(0,0)\} $ is $\mathbb R $
while it is $ S^1$ for the 
induced formal singular foliation.
\end{rem}

Now let us relate it with the notion of Yang-Mills connections.
The Lie algebra bundle whose fiber over $\ell $ is $(T_\ell,\mathcal T_\ell)$ integrates to a group bundle whose fiber over $\ell $ is  ${\mathrm{Inner}}(\mathcal T_\ell) $, i.e.\ the subgroup of formal diffeomorphisms generated by the formal flows of elements in $\mathcal T_\ell $. The group bundle has fibers isomorphic to ${\mathrm{Inner}} (\mathcal T_0)$ by Proposition \ref{prop:IsomorphicTransverseFoliations}. Also, it is locally trivial (by local existence of flat formal $\cF$-connections), hence it is a diffeological group bundle, and, equivalently, it can be defined as the identity component of the Yang-Mills groupoid's isotropy bundle.
We denote it by ${\mathrm{Inner}} (\mathcal T)$.

By definition, the group bundle acts formally on the normal bundle.
Each fiber of this formal action is isomorphic to the formal action of ${\mathrm{Inner}} (\mathcal T_0)$ on $\mathbb R^d $. This action is therefore faithful (in the formal sense).

As we have just seen in the proof of Corollary \ref{cor:YMgroupoid}: the formal $ \mathcal F$-connection induces an Ehresmann connection on the group bundle ${\mathrm{Inner}} (\mathcal T) $, and its parallel transportation $\mathrm{PT}^{\mathrm{Inner}(\cT)}_\gamma$ is given for any $\gamma $ as above by:
 $$ \mathrm{PT}^{\mathrm{Inner}(\cT)}_\gamma(\phi) =  (\mathrm{PT}_{\gamma}) \circ \phi \circ  \mathrm{PT}_{\gamma}^{-1} $$
 for every $\phi \in  {\mathrm{Inner}} (\mathcal T_{\ell_0}) $. One therefore has a formal Ehresmann connection on $T \to L$ and an Ehresmann connection on ${\mathrm{Inner}} (\mathcal T) $ which we call a compatible pair of Yang-Mills connections, that is, we have

 \begin{equation*}
     \mathrm{PT}_\gamma(\phi \cdot p)
     =
     \mathrm{PT}^{\mathrm{Inner}(\cT)}_\gamma(\phi) \cdot \mathrm{PT}_\gamma(p)
 \end{equation*}

 for all $p \in \cT_{\ell_0}$ and $\phi \in \mathrm{Inner}(\cT_{\ell_0})$, and such that for all contractible loops $\gamma_0$ starting and ending at $\ell_0 \in L$ there is a $\mathrm{Hol}^0(\gamma_0) \in \mathrm{Inner}(\cT_{\ell_0})$, only depending on $\gamma_0$, such that
\begin{equation*}
\mathrm{PT}_{\gamma_0}(p)
=
\mathrm{Hol}^0(\gamma_0) \cdot p~.
\end{equation*}

 \begin{rem}
One can similarly consider the group bundle $\mathrm{Sym}(\cT)$ whose identity component is $\mathrm{Inner}(\cT)$ and its structural Lie group is given by $\mathrm{Sym}(\cT_0)$; the previous argument can be extended to say that one has a pair of compatible Yang-Mills connections on $T \to L$ and on ${\mathrm{Sym}} (\mathcal T) $. Analogously, this holds for any subgroup bundle of $\mathrm{Sym}(\cT)$, containing $\mathrm{Inner}(\cT)$ as identity component, and stable under the conjugation with $\mathrm{PT}_\gamma$. An example of such a group bundle is the Yang-Mills groupoid's isotropy bundle.

Given such a subgroup bundle, due to a trivial center of and the faithfulness of the canonical action of $\mathrm{Sym}(\cT)$ it is straightforward to show that formal $\cF$ connections are 1:1 to compatible pairs of Yang-Mills connections on $T$ and the given subgroup bundle.

$\mathrm{Hol}^0(\gamma_0)$ is in any case an element of $\mathrm{Inner}(\cT_{\ell_0})$ because it comes from parallel transports over contractible loops. 
\end{rem}

\begin{theorem}
\label{thm:isYM} 
Consider a formal singular foliation $\mathcal F $ along a leaf $L $. Any formal $\mathcal F $-connection  equips the group bundle ${\mathrm{Inner}} (\mathcal T) $ and the transverse bundle $T \to L $ with a compatible pair of Yang-Mills connections. 
Moreover, two different formal $\mathcal F $-connections give Yang-Mills connections that differ through an action by an inner symmetry (by the canonical one on $T$ and by conjugation on ${\mathrm{Inner}}(\cT)$). 
\end{theorem}

\begin{rem}
    Analogously, one achieves the same statement by replacing $\mathrm{Inner}(\cT)$ with the isotropy bundle of the Yang-Mills groupoid.
\end{rem}

\begin{proof}[Proof of Theorem \ref{thm:isYM}]
In short: the first part of the statement  is an immediate consequence of the properties of the parallel transportation studied above. The second part is a consequence of properties $\mathcal P_B,\mathcal P_C $.
\end{proof}

In Appendix \ref{app:ClassOfYM}, we point out that our classification carries over to Yang-Mills connections in the centerless case.

\section{Classification I: The Theorem}

\label{sec:ClassI}

This section is devoted to an  abstract characterizations of the type: "formal singular foliations along a leaf $\leaf$  are in one-to-one correspondence with equivalence classes of some-objects-to-be-introduced", see Theorem \ref{thm:classification}. This equivalence will be simplified in Theorem \ref{thm:classificationSimplified} and exploited in Section \ref{sec:ClassII}. 

We start with a definition.

\begin{deff}
\label{def:transversedata}
Let $\leaf$ be a manifold and $(\mathbb R^d, \mathcal T_0) $ a formal singular foliation near $0$.
We call a \data{} a triple $(\tilde L, H, P)$ made of 
\begin{enumerate}
\item[(a)] 
a Galois cover\footnote{That is a connected manifold $ \tilde L$, equipped with a submersion onto $ L$, such the fundamental group of $L$ acts transitively on the fibers. It is a theorem that Galois covers are in one to one correspondence with normal subgroups of the fundamental group of $L$: to such a normal subgroup $K$, one associates the quotient of the universal cover of $L$ by $K$. By construction, $ \tilde L \to L$ is a $\pi_1(\tilde L, L):=\pi_1(L)/K$-principal bundle.} $\tilde{L} \to L $ (we denote by $ \pi_1(\tilde L, L)$ its Galois group),
\item[(b)] a subgroup $H$ of ${\mathrm{Sym}}(\mathcal T_0)$ containing $\mathrm{Inner}(\mathcal T_0) $ and equipped with a surjective group morphism onto $\pi_1(\tilde L, L) $ with kernel $\mathrm{Inner}(\mathcal T_0)$: $$ \xymatrix{  \ar@{^(->}[d] \mathrm{Inner}(\mathcal T_0) \ar@{^(->}[r]&\ar@{^(->}[d] H \ar@{->>}[r]& \ar@{^(->}[d] \pi_1(\tilde L,\leaf) \\ \ar@{^(->}[r]\mathrm{Inner}(\mathcal T_0) &  \ar@{->>}[r]\mathrm{Sym}(\mathcal T_0)& \mathrm{Out}(\mathcal T_0)  } $$
\item[(c)] and a diffeological\footnote{The diffeology here is induced by the exact sequence as in Equation~\eqref{eq:defH} and therefore satisfies  Lemma~\ref{lem:diffeologyH}.} $H$-principal bundle $ P \to L$ which is an extension of $\tilde{L} \to L $, i.e.\ such that $ \tilde L = P/\mathrm{Inner}(\mathcal T_0)$. 
\end{enumerate}

We say that two \datas{} $(\tilde{L_0}, H_0, P_0) $ 
and $(\tilde{L_1}, H_1, P_1)$ are equivalent if $\tilde{L_0}=\tilde{L_1}$ and if there exists $ \psi \in {\mathrm{Sym}}(\mathcal T_0)$ such that $ H_1 = \psi^{-1} H_0 \psi $ and such that $ P_1 $ is the $H_1$ bundle associated to $ P_0$ through the group  isomorphism $H_1 \simeq H_0 $ above. 
\end{deff}

\begin{rem}
\label{rem:Pgivestriple}
\normalfont
The three items defining a \data{} are not independent. 
Given a connected principal $H$-bundle $P \to L$, a Galois cover can be recovered as being $ P/ {\mathrm{Inner}}(\mathcal T_0) $.
In fact, there is a one-to-one correspondence between \datas{} and \underline{connected} principal $H$-bundles $P \to L$ where $H$ is a subgroup of ${\mathrm{Sym}}(\mathcal T_0) $ containing  $ {\mathrm{Inner}}(\mathcal T_0)$ as the connected component at the identity (as in Equation \eqref{eq:defH}).
\end{rem}

\begin{rem}
\Datas{} are in one-to-one correspondence with pairs made of a group morphism $$ \Xi: \pi_1(L)\to {\mathrm{Out}}(\mathcal T_0) $$
together with a principal $H$-bundle $ P \to L $, with $H \subset {\mathrm{Sym}}(\mathcal T_0)$ the inverse image through ${\mathrm{Sym}}(\mathcal T_0) \to {\mathrm{Out}}(\mathcal T_0)  $ of the image of $ \Xi$.
Two \datas{} are equivalent through some $\psi \in {\mathrm{Sym}}(\mathcal T_0)$ if and only if $ (\Xi_0,P_0)$ and $(\Xi_1,P_1) $ are related by $$\Xi_1= \overline{\psi}^{-1} \, \Xi_0 \, \circ \overline{\psi} $$ with $\overline{\psi} \in {\mathrm{Out}}(\mathcal T_0) $ being the class of $ \psi$, while $ P_1  $ is the principal $ H_1 \coloneqq \psi^{-1} H_0 \psi$-bundle associated to the $H_0$-principal bundle $P_0$,
\emph{i.e.}\ $ P_1=  \frac{P_0 \times H_1}{H_0}$.
 \end{rem}

Now, we will explain why leaf data classify formal singular foliations along a leaf $L$.

\subsection{From formal singular foliation to leaf data}\label{sec:Decomposition}
\begin{blue}
 Let $\mathcal F $ be a formal singular foliation  along a leaf $\leaf $. Let $(\mathbb R^d, \mathcal T_0)$ be a representative of its transverse model. We construct an equivalence class of \datas.

\vspace{0.2cm}

Let us choose:
\begin{enumerate}
\item[C1] A point $ \ell $ in $L$.
\item[C2] A tubular neighborhood $\normal \to L $.
\item[C3] An isomorphism\footnote{The fiber $ T_\ell$ is equipped, as in Subsection \ref{sec:transver}, with a formal singular foliation $ \mathcal T_\ell$, which is isomorphic to $\mathcal T_0 $ for all $ \ell \in L$ by Proposition \ref{prop:IsomorphicTransverseFoliations}.} $ (T_\ell,\mathcal T_\ell) \simeq (\mathbb R^d, \mathcal T_0)$.
\item[C4] An $ \mathcal F$-connection.
\end{enumerate}

Let $\pi_1(L,\ell) $ stand for the fundamental group of $ L$, with base point $ \ell$. 
 By property $ \mathcal P_A$ in Corollary \ref{cor:PAPBPC}, for every path $ \gamma$ starting at $\ell \in L $ and ending at some point $ \ell' \in L$, parallel transportation $\mathrm{PT}_\gamma   $ is an isomorphism of singular foliation from  $(\normal_\ell,\mathcal T_\ell) $ to $(\normal_{\ell'},\mathcal T_{\ell'}) $.
 Composing with the isomorphism chosen in (C3), we obtain an isomorphism of singular foliation from  $(\mathbb R^d,\mathcal T_0) $ to $(\normal_{\ell'},\mathcal T_{\ell'}) $.
Let us now consider for any $ \ell'\in L$ the set $P(\mathcal F)_{\ell'}$ of all formal isomorphisms
 $$  \mathbb R^d \simeq \normal_{\ell'}   $$
 obtained by composing:
 \begin{enumerate}
     \item an arbitrary  inner isomorphism of $ \mathcal T_0$,
     \item the isomorphism chosen in item (C3),
     \item the parallel transportation with the help of the $ \mathcal F$-connection chosen in item (C4) with respect to an arbitrary paths $ \gamma$ starting at $ \ell$ and ending at $ \ell'$.
 \end{enumerate}

\begin{rem}
\label{rem:alternative}
This set  $P(\mathcal F)_{\ell'}$ admits the following alternative description. It is obtained by composing:
  \begin{enumerate}
     \item The identification chosen in item (C3),
     \item an arbitrary inner isomorphism of $ \mathcal T_\ell$,
     \item the parallel transportation with the help of the $ \mathcal F$-connection chosen in item (C4) with respect to an arbitrary path starting at $ \ell$ and ending at $ \ell'$.
 \end{enumerate}
\end{rem}
By property $ \mathcal P_A$ in Corollary \ref{cor:PAPBPC}, the set  $P(\mathcal F)_{\ell'}$ is a subset of the set  of isomorphisms of singular foliations:
 $$ (\mathbb R^d,\mathcal T_0) \simeq (\normal_{\ell'},\mathcal T_{\ell'}) . $$
 We denote by $ P(\mathcal F)$ the union over $ \ell'$ of all  $P(\mathcal F)_{\ell'}$.  It fibers over $ L$ through a projection that we denote by $ \pi$:
  $$ \pi \colon  P(\mathcal F) \to L $$

For every loop $ \gamma$ based at $\ell \in L $, parallel transportation $\mathrm{PT}_\gamma   $ is a symmetry of $(\normal_\ell,\mathcal T_\ell) $, \textit{i.e.}\   $$  \mathrm{PT}_\gamma \in  {\mathrm{Sym}}(\mathcal T_\ell)
\simeq  {\mathrm{Sym}}(\mathcal T_0).
$$
The isomorphism above is given by the conjugation by the one given in (C3).
By properties $ \mathcal P_B$, the class of $\mathrm{PT}_\gamma$ in ${\mathrm{Out}}(\mathcal T_\ell) $ depends only on the homotopy class of $\gamma $. The map $\gamma \mapsto \mathrm{PT}_\gamma$ therefore induces a group morphism 
\begin{equation}\label{eq:pairing0} \pi_1(L,\ell) \longrightarrow
{\mathrm{Out}} (\mathcal T_\ell) \simeq {\mathrm{Out}} (\mathcal T_0).  \end{equation}
that we call \emph{\pairing{}} of $\mathcal F $. By property $ \mathcal P_C$, the \pairing{} does not depend on the choice of a formal $\cF$-connection.

\end{blue}
\begin{rem} The \pairing{} was noted by \cite[Definition 7.6]{francis2023singular} for codimension one singular foliations and in \cite{BDW} in the case where the transverse model is the singular foliation of all vector fields on $\mathbb R^d$ vanishing up to order $k+1$ (see  \cite[Definition 1.5]{BDW}). For Lie algebroids, a similar construction was completed by Rui Loja Fernandes (see \cite[Definition 3.1]{fernandes}). None of these constructions are in the formal case, but are identical beside this difference.
Last,  a similar map that appeared in Definition 2.9 in \cite{LGR} as a group morphism $ \pi_1(L) \to {\mathrm{Diff}}(\pi^{-1}(\ell)/\mathcal T_\ell)$ in the smooth case provided that a complete formal $\cF$-connection exists,
so that there is a tubular neighborhood of $L$ on which all transverse singular foliations are isomorphic.
In the formal setting that we use here, these conditions are automatic.
Above, $ {\mathrm{Diff}}(\pi^{-1}(\ell)/\mathcal T_\ell)$
is the group of bijections of the leaf space of the transverse singular foliation at some $\ell \in L $ that comes from a symmetry of $ \mathcal T_\ell$.
This group coincides with the group of outer symmetries of the transversal, and maps to the group of outer symmetries of the $ \infty$-jet of the formal singular foliation associated to $\mathcal T_\ell $. This composition is a group morphism that coincides with the \pairing{} of the formal singular foliation along $L$ considered in Example \ref{decomp:step5}.
\end{rem}

\begin{blue}
Another choice of isomorphism in item (C3) or an other choice of base point in item (C1) above would change the \pairing{} by conjugation with some element in ${\mathrm{Out}} (\mathcal T_0)$.
Also, it does not depend on the choice of a tubular neighborhood up to this conjugation.
In particular, the kernel $K \subset \pi_1(L,\ell)$ of the \pairing{} does not depend on the choices  (C2), (C3), and (C4). 
Consider
 $$  L(\mathcal F) = L^u / K$$
 with $L^u $ the universal cover of $ L$, computed with respect to the base point $\ell$. 
 The bundle  $L(\mathcal F)\to L$
 is a principal bundle with respect to the quotient group $$\pi_1(L(\mathcal F), L) :=\frac{\pi_1(L,\ell)}{K}.$$ 
Recall some vocabulary:  $L(\mathcal F)\to L$ is what we call a \emph{Galois bundle} (= connected principal bundle with respect to a discrete group) and $\pi_1(L(\mathcal F), L)$ is called its  \emph{Galois group}. 
A different choice of a base point $ \ell$ would give a canonically diffeomorphic Galois bundle, hence we cay say that we have
 therefore associated a Galois bundle to $ \mathcal F$.

Let $ H(\mathcal F) \subset  \mathrm{Sym}(\mathcal T_0) $ be the inverse image of the image of the \pairing . By construction, $ L(\mathcal F)$ and $ H(\mathcal F)$ satisfy the properties required in items (a) and (b) of Definition \ref{def:transversedata}.

Let us check that $ P(\mathcal F)$ now respects item (c).
By property $ \mathcal P_C$, an another choice of an $\mathcal F $-connection or another choice of a tubular neighborhood would define the same set $P(\mathcal F) $.
 By the definition of $H(\mathcal F) $ for two different paths $\gamma, \gamma'$ with the same starting and ending points
 $$\left(\mathrm{PT}_{\gamma'}\right)^{-1}  \circ  \mathrm{PT}_\gamma   =\mathrm{PT}_{(\gamma')^{-1}\star \gamma}$$
lies in the image of $H(\mathcal F) $ through the isomorphism chosen in (C3).
As a consequence, 
the fibers of $P(\mathcal F) \to L$ are by construction acted upon transitively by $ H(\mathcal F)$.
This action is also free, because for any two elements of $ P(\mathcal F)$ with associated paths $ \gamma,\gamma'$, 
$$\left(\mathrm{PT}_{\gamma'}\right)^{-1}  \circ  \mathrm{PT}_\gamma   \in \mathrm{Inner}(\mathcal T_\ell) \hbox{ iff }  \mleft[(\gamma')^{-1} \star \gamma\mright] \in K $$
by definition of the kernel $K$.
It is a locally trivial bundle since $ \mathcal F$ is locally trivial.
It is therefore a principal bundle. 

Last, there is a natural map
 $ P(\mathcal F) \to L(\mathcal F)$ that maps an element of $ P(\mathcal F)$ to the homotopy class modulo $K$ of the path $ \gamma$ defined in item (3). By construction, $ P(\mathcal F) \to L(\mathcal F)$ is a principal $ \mathrm{Inner}(\mathcal T_0)$-bundle, and it turns $ P(\mathcal F) \to L$ into an extension of the Galois bundle $ L(\mathcal F) \to L $. 

Let us recapitulate. To a formal singular foliation along a leaf $ L$ with transverse model $(\mathbb R^d, \mathcal T_0) $, we have associated: 
\begin{enumerate}
\item[(a)] a Galois cover $L(\mathcal F) $ of $L$ (with respect to its Galois group $ \pi_1(L(\mathcal F),L)$),
\item[(b)] a sub-group $H(\mathcal F) $ of $ {\mathrm{Sym}}(\mathcal T_\ell)$  making the sequence \eqref{eq:defH} an exact sequence of groups, and
   \item[(c)] an extension of $ L(\mathcal F)$ to a principal $H (\mathcal F) $-principal bundle $P(\mathcal F) $.
\end{enumerate}
It is therefore a \data{}.
Now, a different choice of an $ \mathcal F$-connection in (C4) would give the same set. 
Last,  a different choice of the identification in (C3)   would yield an equivalent \data, and so would a different choice of based point (C1), or a different choice of a formal tubular neighborhood (C2).
%
%
 We can therefore conclude this discussion as follows.

\begin{prop}\label{prop:leafdata}
To any formal singular foliation $\mathcal F $ along a leaf $ \leaf$ with transverse model $(\normal, \cT_0)$, the above construction associates an equivalence class of \datas{}.
\end{prop}

\end{blue}

\begin{example}
\normalfont
\label{ex:vectorBundles}
For formal vector fields on a vector bundle $ E \to M$  tangent to the zero section, the
transverse model is the singular foliation of all formal vector fields on $\mathbb R^d $ that vanish at $0$. By Example \ref{ex:outis}, its outer symmetry group is $\mathbb Z/2\mathbb Z $. 

By spelling out the construction of the associated \data{} $ (L(\mathcal F),H(\mathcal F),P(\mathcal F))$, one finds the following. The \pairing{} is the orientation map $\pi_1(L,\ell) \to \mathbb Z/2\mathbb Z $. If  $E$ is orientable, then the \pairing{} is trivial, the Galois bundle is $L(\mathcal F) = L$, $H(\mathcal F) $ is the group of all formal diffeomorphisms $\mathrm{Diff}^{{\rm formal}}(\mathbb R^d, 0)_+$ whose differential has a positive determinant at $0$. This group contains ${\mathrm{GL}}_d(\mathbb R)_+ $ as a subgroup, and $P(\mathcal F)$ is the $H(\cF)$-bundle induced by the positive frame bundle $ {\mathrm{Fr}}_+(E)$ (= the ${\mathrm{GL}}_d(\mathbb R)_+ $ of basis or positive orientation), i.e.\ $P(\mathcal F)= \frac{{\mathrm{Fr}}_+(E) \times \mathrm{Diff}^{\rm formal}_0 (\mathbb R^d)_+}{\mathrm{GL}_d(\mathbb R)_+}   $.  
If $E$ is not orientable, then the \pairing{} is onto, so that the Galois bundle is the manifold $L(\mathcal F) $ of elements of norm $1$ in the determinant bundle of $E$ (for an arbitrary metric). The group $H(\mathcal F) $ is $\mathrm{Diff}^{{\rm formal}}(\mathbb R^d, 0)$. This group contains $\mathrm{GL}_d(\mathbb R) $ as a subgroup. The bundle $P(\mathcal F) $ is the $H(\mathcal F)$-bundle induced by the frame bundle $ {\mathrm{Fr}}(E)$, \textit{i.e.}\ $P(\mathcal F)= \frac{{\mathrm{Fr}}(E) \times \mathrm{Diff}_0 (\mathbb R^d)}{\mathrm{GL}_d(\mathbb R)}   $.
 \end{example}

\begin{example}
\normalfont
\label{ex:torus}
Consider (following an idea in \cite{Ryvkin2}) the formal singular foliation on
$\mathbb T^2 \times  \mathbb R$ with variables $\theta,\eta,t $ and the formal singular foliation along the leaf $ L=\mathbb T^2 \times  \{0\}  \simeq \mathbb T^2 $ 
is generated by
 $$  \frac{\partial}{\partial \theta} + t^5 \frac{\partial}{\partial t}, \frac{\partial}{\partial \eta} + t^6 \frac{\partial}{\partial t} , t^{10}\frac{\partial}{\partial t} .$$
 The transverse model is the formal singular foliation on $ \mathbb R$ generated by the vector field $t^{10}\frac{\partial}{\partial t}  $ .
 
 Here is the associated \data{} in this case:
\begin{enumerate}
\item[(a)] $\leaf(\mathcal F) = \mathbb R^2 $ is the universal cover of $\mathbb T^2 $.
\item[(b)] the group $H(\mathcal F)$ is the group of all formal diffeomorphims of $\mathbb R $ of the form
 $$t \mapsto t+ \mleft(mt^5+\frac{5}{2}m^2 t^{10}\mright)+ n t^6 + \sum_{k\geq 10} a_k t^k   $$
 for some $n,m \in \mathbb Z$ and $ a_k \in \mathbb R$
\item[(c)] the principal bundle $ P(\mathcal F)$ is the set of  formal diffeomorphisms of $\mathbb R $ of the form
$$ t \mapsto t+((a+m)t^5+\frac{5}{2}(a+m)^2 t^{10})+ (b+n) t^6 + \sum_{k\geq 10} a_k t^k   $$
 for some $n,m \in \mathbb Z$ and $ a,b,a_k \in \mathbb R $
the projection onto $L(\mathcal F) = \mathbb R^2 $ being given by $(a,b)$, the projection on $\mathbb T^2$ is given by the class of $ (a,b)$ modulo $\mathbb Z^2$, and the $H(\mathcal F)$-module structure being given by left composition.
 \end{enumerate}
\end{example}

\begin{example}
\label{ex:ppalbundleSFtriple}
\normalfont
For a formal singular foliation as in Example \ref{ex:ppalbundleSF} with $G$ acting formally and faithfully in $\mathbb R^d $ fixing 0, the \data{} is as follows. 
Since the transverse singular foliation $\mathcal T_0 $ is associated to the infinitesimal formal action on $\mathbb R^d $ of the Lie algebra $\mathfrak g $ of $G$, the formal action of $G$ preserves it. In other words, the formal action induces a group morphism $ \psi \colon G \hookrightarrow {\mathrm{Sym}}(\mathcal T_0)$. Also, the connected component $G_0$ of the unit of $G$ is mapped to ${\mathrm{Inner}}(\mathcal T_0) $, so that this group morphism induces a group morphism $ G/G_0 \to  {\mathrm{Out}}(\mathcal T_0)$. 
Now, as for any principal $G$-bundle $ Q \to L$, there is a natural group morphism $ \pi_1(L,\ell) \to G/G_0$.
The composition with the above morphism  $G/G_0 \to  {\mathrm{Out}}(\mathcal T_0)$ is the \pairing{}. In particular, the Galois cover is a quotient of  $L(\mathcal F) := Q/G_0$. By construction, $H(\mathcal F)= \psi(G) \cdot {\mathrm{Inner}}(\mathcal T_0) \subset {\mathrm{Sym}}(\mathcal T_0)  $. Notice that  $G  $ is a subgroup of $ H(\mathcal F)$. The principal  $H(\mathcal F)$-principal bundle $ P(\mathcal F)$ is induced by the principal $G$-bundle $Q$, i.e.\ $P(\mathcal F) = \frac{Q \times H(\mathcal F)}{G} $.
\end{example}

 \begin{rem}
 There is a natural group morphism from ${\mathrm{Out}} (\mathcal T_0) $ to $\mathbb Z/2\mathbb Z $ that associates to any outer automorphism $\bar{\phi} $  the sign of the determinant of the differential at $0 \in\mathbb R^d $ of any of its representative $\phi $ in ${\mathrm{Sym}} (\mathcal T_0) $.
 For any formal singular foliation along a leaf $L$, the composition of the \pairing{} with this morphism is the orientation group morphism $\pi_1 (L) \to \mathbb Z/2 \mathbb Z $ of the normal bundle $ T \to L$.
 \end{rem}

\begin{rem}
\label{rem:holonomygroupoid2}
There is a relation between \data{} and the holonomy groupoid of Remark \ref{rem:holonomygroupoid} is transitive, it has to be the gauge groupoid of a principal group bundle, see, \emph{e.g.}, \cite[\S 1.3]{MR2157566}. A line by line comparison shows that this  group is $H(\mathcal F)/{\mathrm{exp}}(I_0 \mathcal T_0) $, with $I_0 \subset C^{\mathrm{formal}} $ being the ideal of functions vanishing at $0$, and that the principal bundle in question is $ P(\mathcal F)/{\mathrm{exp}}(I_0 \mathcal T_0)  $.
\end{rem}

To conclude this section, let us present an alternative description of \data. 
It can be seen as a general property for Yang-Mills connections, more precisely from the Yang-Mills groupoid of section \ref{sec:YMgroupoid}; as already observed (infinitesimally) in \cite{My1stpaper, MyThesis}, based on extending Lie algebroids by Lie algebra bundles, \cite[\S 7.2]{MR2157566}; and (integrated) in \cite{SRFCYM}.
Following \cite[\S 1.3]{MR2157566}, to any transitive groupoid $\Gamma \rightrightarrows \leaf $, there is an associated principal bundle as follows:
fix $\ell$, and consider its isotropy group $ \Gamma_\ell^\ell = \Gamma_\ell \cap \Gamma^\ell$. By construction, $\Gamma_\ell$ is a principal $ \Gamma_\ell^\ell $-bundle over $ \leaf$ whose projection is given by the target arrow. 
(Moreover, $\Gamma $ is the Atiyah groupoid (also known as gauge groupoid) of this principal bundle.)  
When we consider the Yang-Mills groupoid ${\mathrm{YM}}(\mathcal F) \rightrightarrows \leaf $ of section \ref{sec:YMgroupoid}, we obtain a principal bundle whose isotropy group we denote by $H(\ell)$, while $P(\ell) $ will stand for $s^{-1}(\{\ell\}) $. By construction also, $H(\ell) $ is a subgroup of ${\mathrm{Sym}}(\mathcal T_\ell) $ containing ${\mathrm{Inner}}(\mathcal T_\ell) $ as a normal sub-group.

\begin{lemma}
\label{lem:His}
The image of $H(\ell)$ 
through the canonical projection $  {\mathrm{Sym}}(\mathcal T_\ell)  \to  {\mathrm{Out}}(\mathcal T_\ell)$ coincides with the image of the \pairing{}.
\end{lemma}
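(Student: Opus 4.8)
The plan is to prove the claim by directly unfolding the two definitions and comparing images. First I would recall that, by the construction recalled just before the lemma, $H(\ell)$ is the isotropy group $\mathrm{YM}_\ell^\ell$ of the Yang-Mills groupoid at $\ell$. By its defining formula \eqref{eq:YMgroupoidDef}, specialised to $\ell_0 = \ell_1 = \ell$, this isotropy group consists precisely of the composites $\phi \circ \mathrm{PT}_\gamma$ where $\gamma$ runs over \emph{loops} based at $\ell$ (that is, $\gamma \in \mathrm{Path}_\ell^\ell$) and $\phi$ runs over $\mathrm{Inner}(\mathcal T_\ell)$. Each such composite lies in $\mathrm{Sym}(\mathcal T_\ell)$: indeed $\mathrm{PT}_\gamma \in \mathrm{Sym}(\mathcal T_\ell)$ by property "$\mathcal P_A$" of Corollary \ref{cor:PAPBPC}, and $\mathrm{Inner}(\mathcal T_\ell) \subset \mathrm{Sym}(\mathcal T_\ell)$, so the canonical projection $p \colon \mathrm{Sym}(\mathcal T_\ell) \twoheadrightarrow \mathrm{Out}(\mathcal T_\ell)$ is defined on all of $H(\ell)$.

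The key observation is that $p$ has kernel exactly $\mathrm{Inner}(\mathcal T_\ell)$, by the very definition of $\mathrm{Out}(\mathcal T_\ell)$ as the quotient $\mathrm{Sym}(\mathcal T_\ell)/\mathrm{Inner}(\mathcal T_\ell)$. Hence for any $\phi \in \mathrm{Inner}(\mathcal T_\ell)$ and any loop $\gamma$ one has $p(\phi \circ \mathrm{PT}_\gamma) = p(\phi)\, p(\mathrm{PT}_\gamma) = p(\mathrm{PT}_\gamma)$; the inner factor is killed. By the definition of the \pairing{} in \eqref{eq:pairing0}, $p(\mathrm{PT}_\gamma)$ is exactly the value of the \pairing{} on the homotopy class $[\gamma] \in \pi_1(L,\ell)$. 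Letting $\gamma$ range over all loops at $\ell$ then gives the inclusion $p(H(\ell)) \subseteq \mathrm{im}(\text{\pairing{}})$.

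For the reverse inclusion I would note that every element of the image of the \pairing{} is of the form $p(\mathrm{PT}_\gamma)$ for some loop $\gamma$ based at $\ell$, and that $\mathrm{PT}_\gamma = \mathrm{id} \circ \mathrm{PT}_\gamma$ already belongs to $H(\ell)$ by taking the trivial inner symmetry $\phi = \mathrm{id}$ in \eqref{eq:YMgroupoidDef}. Thus $\mathrm{im}(\text{\pairing{}}) \subseteq p(H(\ell))$, and the two sets coincide. I do not expect any genuine obstacle here: the statement is essentially a bookkeeping identity expressing that the isotropy group of the Yang-Mills groupoid is, modulo the inner symmetries that form the kernel of $p$, generated by the parallel transports around loops, which is by construction what the \pairing{} records. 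The only points requiring care are to invoke "$\mathcal P_A$" so that the projection is defined on $H(\ell)$, and to use "$\mathcal P_C$" (via Corollary \ref{cor:YMgroupoid}) implicitly so that the comparison is independent of the chosen $\mathcal F$-connection.
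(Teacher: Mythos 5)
Your proof is correct and follows essentially the same route as the paper: the paper's own argument is precisely to specialise the defining formula \eqref{eq:YMgroupoidDef} of the Yang-Mills groupoid to $\ell_0=\ell_1=\ell$, obtaining $H(\ell)=\{\mathrm{PT}_\gamma\circ\phi \mid \gamma\in\mathrm{Path}_\ell^\ell,\ \phi\in\mathrm{Inner}(\mathcal T_\ell)\}$, and then observe that projecting modulo $\mathrm{Inner}(\mathcal T_\ell)$ yields exactly the image of the \pairing{}. You merely spell out the two inclusions and the kernel argument that the paper leaves implicit, which is fine.
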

\begin{proof}
The construction of the Yang-Mills groupoid as the set of all possible parallel transportations, \emph{i.e.} Equation \eqref{eq:YMgroupoidDef} for $\ell_1=\ell_2=\ell $, implies that  $$ H(\ell) = \left\{ {\mathrm{PT}}_\gamma \circ \phi \, \middle| \, \gamma \in  {\mathrm{Path}}_{\ell}^{\ell} , \phi \in {\mathrm{Inner}}(\mathcal T_{\ell})\right\}  $$
which is precisely the content of the statement.
\end{proof}

We have associated to a given formal singular foliation along a leaf $\leaf $ upon choosing a point $\ell \in \leaf $:
\begin{enumerate}
\item a group $ H(\ell)$, i.e.\ the inverse image through $  {\mathrm{Sym}}(\mathcal T_\ell)  \to  {\mathrm{Out}}(\mathcal T_\ell)$ of the image of the \pairing{}
    \item a principal $H(\ell) $-bundle $P(\ell) $ over $ L$ whose quotient under $ {\mathrm{Inner}}(\mathcal T_\ell) $ is the Galois bundle associated to the kernel of the \pairing.
\end{enumerate}
Now, let us choose an identification of $ (T_\ell, \mathcal T_\ell)$ with the transverse singular foliation $(\mathbb R^d, \mathcal T_0)  $.  
The above data then becomes a \data.
Different identifications yield equivalent \datas. 

\subsection{Reconstruction}
\label{sec:reconstruction}

Now, we start from
\begin{enumerate}
\item a manifold $\leaf$,
\item a formal singular foliation $(\mathbb R^d, \mathcal T_0)$ near $0$ (that we call the \emph{transverse model}),
\item an equivalence class of \datas{} (see Def.\ \ref{def:transversedata}).
\end{enumerate}
Let $(\tilde L, H, P)$ be a representative of the equivalence class of \datas{}.
\begin{blue}


Let us construct a singular foliation with transverse model $ \mathcal T_0$ along the leaf $\leaf$. 

\textbf{Step 1: Construction of the formal neighborhood.}
The formal neighborhood is ``morally'' the associated space $ \frac{P \times \mathbb R^d}{H}$. But this is not  well-defined for many reasons (the action is formal, and the diffeology may be not clear). So let us define it explicitly.
Choose an open cover $ (U_i)_{i\in I}$ of $ \leaf$, and local sections $ s_i \colon U_i \mapsto P $, smooth with respect to the diffeology of $P$. Then define maps $ s_{ij}\colon U_i \cap U_j \longrightarrow H $ by: 
  $$  s_{i} (\ell)=s_{ij} (\ell) \cdot s_j(\ell) \hspace{1cm} \forall \ell \in U_i \cap U_j .$$
 These maps form a \v{C}ech $1$-cocycle.
In particular, they can be used to glue the formal neighborhoods 
 $(U_i \times \mathbb R^r)_{i\in I} $ of $ (U_i)_{i\in I}$.
More precisely, we use the $ s_{ij}$ to glue the sheaves of formal functions with $r$-variables on $U_i $, to yield a sheaf on $L$, which is by construction a
 formal neighborhood of $ \leaf$ that we denote by 
  $$   \frac{\coprod_{i\in I}  U_i \times \mathbb R^r}{\sim(s_{ij})}~.$$

 \textbf{Step 2: Construction of the formal singular foliation}
 This \v{C}ech $1$-cocycle  is not only valued in $H$, but is also a smooth map with respect to the diffeology of $H$ by definition of a principal bundle.
 By Lemma \ref{lem:simplifyBundle}, for any pair $i,j\in I$, $ s_{ij}$, seen as a formal diffeomorphism of the trivial singular foliation on $U_i\cap U_j  $ with transverse model $  \mathcal T_0$, is a symmetry of the latter. In particular, they can be used to glue the  trivial singular foliation along the leaf $ U_i$ with transverse model $  \mathcal T_0$
 with the  trivial singular foliation along the leaf $ U_j$ with transverse model $  \mathcal T_0$ into a  formal singular foliation along the leaf $ U_i\cup U_j$, with transverse model $  \mathcal T_0$. 
 Repeating the procedure for all indices, one obtains  a formal singular foliation along the leaf $\leaf$ that we denote by $ \mathcal F((s_i)_{i\in I})$ on $  \frac{\coprod_{i\in I}  U_i \times \mathbb R^r}{\sim(s_{ij})}$.

\textbf{Step 3: The previous construction does not depend on the choices made in its construction.}
 Let us show that the latter formal singular foliation along the leaf $ \leaf$ does not depend on the choice of the open cover and of the local sections. Since it is clearly the same for an open cover and local sections $(s_i)_{i\in I} $ and a refinement of that open cover and the restrictions of each $ s_i$ to this refinement, it suffices to show that 
for a given open cover, it does not depend on the choice of local sections. Let us choose a different set of sections, say $( s_i')_{i \in I}$.
There exists smooth maps $t_i: U_i \longrightarrow H $ such that 
 $$  s_{i}' (\ell')=t_{i} (\ell')s_i(\ell') \hspace{1cm} \forall \ell' \in U_i $$ 
Altogether, the maps $t_\bullet \coloneqq (t_{i})_{i \in I} $ are an isomorphism of formal neighborhoods: $$ {\mathrm{id}}  \times t_\bullet \colon \coprod_{i\in I}  U_i \times \mathbb R^r \simeq \coprod_{i\in I}  U_i \times \mathbb R^r  $$
that go the quotient to define 
an isomorphism  of formal neighborhoods of $L$ $$  \frac{\coprod_{i\in I}  U_i \times \mathbb R^r}{\sim(s_{ij})} \simeq \frac{\coprod_{i\in I}  U_i \times \mathbb R^r}{\sim(s_{ij}')} . $$
By Lemma  \ref{lem:simplifyBundle}, each $t_i$ is an isomorphism of singular foliations, hence so is its quotient map.  Last,   equivalent \datas{} yield cocycles which differ by a symmetry $\psi $ in the sense that: 
  $$  s_{ij}' = \psi \circ s_{ij} \circ \psi^{-1} $$
Now,  $ {\mathrm{id}}  \times \psi $ is an isomorphism of singular foliation 
 : $${\mathrm{ id}} \times \psi \colon  \coprod_{i\in I}  U_i \times \mathbb R^r \simeq \coprod_{i\in I}  U_i \times \mathbb R^r  $$
that go the quotient to define 
an isomorphism  of formal neighborhoods of $L$: $$ \frac{\coprod_{i\in I}  U_i \times \mathbb R^r}{\sim(s_{ij})} \simeq \frac{\coprod_{i\in I}  U_i \times \mathbb R^r}{\sim(s_{ij}')} . $$ Hence equivalent \datas{} give isomorphic formal singular foliations. 

Let us make a proper definition, for future reference.

\begin{deff}
\label{def:rec}
We call the formal singular foliation above the formal singular foliation reconstructed out of the \data{}  $(\tilde{L},H,P)$. We denote it by $\cF(\tilde{L}, H, P)$. 
\end{deff} 
 
\begin{lemma}\label{lem:transversedataisassocdata}
The \data{} of the singular foliation  $\mathcal F ( \tilde{L},H,P )$ is $(\tilde{L},H,P) $. 
\end{lemma}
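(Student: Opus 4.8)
The plan is to run the extraction procedure of Subsection~\ref{sec:Decomposition} on $\cF(\tilde{L},H,P)$ using the choices most natural for the reconstruction, and then to match the three constituents of the resulting \data{} against $(\tilde{L},H,P)$. First I would fix a base point $\ell \in \leaf$ and a reference point $p_0 \in P$ over $\ell$, take as tubular neighborhood the bundle $\normal = (P\times\mathbb R^d)/H$ of~\eqref{eq:defT}, identify $(\normal_\ell,\cT_\ell)$ with $(\mathbb R^d,\cT_0)$ via $[p_0,v]\leftrightarrow v$, and use as $\cF$-connection the splitting $\mathbb H$ induced on $\normal$ by the principal $H$-connection on $P$ chosen in Proposition~\ref{pro:rec}. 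That the transverse model is $(\mathbb R^d,\cT_0)$ is already part of Proposition~\ref{pro:rec}, and property~"$\mathcal P_C$" guarantees that the \data{} does not depend on this choice of connection. The key computation is then the parallel transport of $\mathbb H$: since $\normal$ is an associated bundle and $\mathbb H$ the induced connection, $\mathrm{PT}_\gamma$ along a path $\gamma$ from $\ell$ to $\ell_1$ is $[p_0,v]\mapsto[p_0^\gamma,v]$, with $p_0^\gamma\in P_{\ell_1}$ the parallel transport of $p_0$ in $P$. For a loop, $p_0^\gamma = p_0\cdot h_\gamma$ with $h_\gamma\in H$ the holonomy, so under the identification by $p_0$ the map $\mathrm{PT}_\gamma$ acts on $\mathbb R^d$ as $h_\gamma\in H\subset\mathrm{Sym}(\cT_0)$; hence the \pairing{} is $[\gamma]\mapsto[h_\gamma]\in\mathrm{Out}(\cT_0)$, i.e.\ the holonomy map $\pi_1(\leaf)\to H/\mathrm{Inner}(\cT_0)$ followed by the inclusion $\pi_1(\tilde L,\leaf)=H/\mathrm{Inner}(\cT_0)\hookrightarrow\mathrm{Out}(\cT_0)$.

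Given this, the Galois cover and the structure group fall out quickly. Projecting the $H$-connection to $\tilde L = P/\mathrm{Inner}(\cT_0)$, whose structure group $\pi_1(\tilde L,\leaf)$ is discrete, gives the unique (flat) connection there, whose monodromy is the deck action; as a Galois cover is connected by definition, this monodromy — equivalently the holonomy map above — is surjective onto $\pi_1(\tilde L,\leaf)$, and its kernel is the normal subgroup $K\subset\pi_1(\leaf)$ defining $\tilde L$. Thus the recovered Galois cover $L(\cF)=L^u/K$ equals $\tilde L$. For the structure group, Lemma~\ref{lem:His} identifies $H(\cF)$ with the preimage in $\mathrm{Sym}(\cT_0)$ of the image of the \pairing{}, namely of $\pi_1(\tilde L,\leaf)$; since $H$ is exactly the union of $\mathrm{Inner}(\cT_0)$-cosets indexed by $\pi_1(\tilde L,\leaf)$ and each such coset is realized by some $h_\gamma$, this preimage is $H$, giving $H(\cF)=H$.

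It remains to identify the principal bundle $P(\cF)=s^{-1}(\ell)\subset\mathrm{YM}(\cF)$ with $P$. I would define $\Psi\colon P\to P(\cF)$ by sending $p\in P_{\ell_1}$ to the formal diffeomorphism $\Psi(p)\colon\normal_\ell\to\normal_{\ell_1}$, $[p_0,v]\mapsto[p,v]$. Writing $p = p_0^\gamma\cdot\phi$ for a suitable path $\gamma$ from $\ell$ to $\ell_1$ and some $\phi\in\mathrm{Inner}(\cT_0)$ — possible precisely because the holonomy map is onto $\pi_1(\tilde L,\leaf)$ — one gets $\Psi(p)=\phi\circ\mathrm{PT}_\gamma$, so $\Psi(p)\in\mathrm{YM}_\ell^{\ell_1}$ by~\eqref{eq:YMgroupoidDef}. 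Then I would check that $\Psi$ covers $\mathrm{id}_{\leaf}$, is injective (as $[p,v]=[p',v]$ for all $v$ forces $p=p'$), is surjective (since $\phi\circ\mathrm{PT}_\gamma=\Psi(p_0^\gamma\cdot\phi)$), and is $H$-equivariant for the precomposition action of $H\cong H(\cF)$: one has $\Psi(p\cdot h)([p_0,v])=[p,hv]=\Psi(p)(\eta_h[p_0,v])$, where $\eta_h\in H(\cF)$ is the isotropy element $[p_0,v]\mapsto[p_0,hv]$ corresponding to $h$. This yields an isomorphism $P(\cF)\cong P$ of $H$-principal bundles, so the whole \data{} of $\cF(\tilde L,H,P)$ is $(\tilde L,H,P)$.

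The step I expect to be the main obstacle is the last one, specifically verifying that $\Psi(p)$ genuinely lands in the Yang-Mills groupoid: this requires absorbing the $H$-component of $p$ into an \emph{inner} symmetry after choosing the path $\gamma$ appropriately, which is exactly where connectedness of the Galois cover (surjectivity of the holonomy onto $\pi_1(\tilde L,\leaf)$) is indispensable. All manipulations take place at the level of the sheaf $(P\times C^{\mathrm{formal}}(\mathbb R^d))/H$, but beyond Lemma~\ref{lem:exp2} and properties~"$\mathcal P_A$"–"$\mathcal P_C$" no further analytic input is needed.
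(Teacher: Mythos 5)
Your proposal is correct and follows, in expanded form, the same route as the paper: the paper's one-line proof cites Example \ref{ex:ppalbundleSFtriple} with $G=H$ (extended to the diffeological setting via Appendix \ref{app:YMconnections}), and the content of that example --- the \pairing{} is the holonomy of the principal connection followed by $\pi_1(\tilde L,L)\hookrightarrow\mathrm{Out}(\cT_0)$, the Galois cover is $P/\mathrm{Inner}(\cT_0)=\tilde L$, $H(\cF)=H$, and $P(\cF)=(P\times H)/H\cong P$ --- is exactly what you establish by directly computing parallel transports of the associated connection in $T=(P\times\mathbb R^d)/H$. Your explicit isomorphism $\Psi$ and the connectedness/holonomy-surjectivity argument are precisely the details that the paper leaves implicit behind the phrase ``by spelling out the construction'', so this is the same proof written out rather than a different one.
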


\begin{proof}
 Let $ (s_{ij})$ be an $H$-valued \v{C}ech $1$-cocycle defining $P$.
 Notice that $ (\bar{s}_{ij})_{i,j\in I}  $ is a $1$-cocycle defining the Galois bundle $ \tilde L$.
 
Recall that  $\cF(\tilde{L}, H, P)$ is constructed on a formal neighborhood of $L$ defined as the quotient:
$$\frac{\coprod_{i\in I}  U_i \times \mathbb R^r}{\sim(s_{ij})}.$$

We choose for a given in $\ell\in U_{i_0} $ to identity $ \mathcal T_\ell$ with $ \mathcal T_0$ using the local isomorphism with $ U_{i_0} \times \mathbb R^r$.
Since the above quotient gives on each open subset $U_i$, a local isomorphism between $\cF(\tilde{L}, H, P)$ to the trivial singular foliation along the leaf $ U_i$ with transverse model $  \mathcal T_0$, 
any $\mathcal F$-connection on $\cF(\tilde{L}, H, P)$  induces an  $\mathcal F$-connection on the trivial singular foliation on $ U_i\times \mathbb R^d$. Now, since parallel transportation along a given path for an $ \mathcal F$-connection for a trivial singular foliation is just the flow of a time-dependent vector field in $ \mathcal T_0$, it is always an inner symmetry. Hence, parallel transportation along any path $\gamma_i $ in $ U_i$ becomes, under this isomorphism, an inner symmetry of $ \mathcal T_0$ that we denote by $ \Phi(\gamma_i)$. We can now describe, for any loop $ \gamma$ based at $ \ell \in L$, parallel transportation along that loop  as follows. 
 This loop intersects several open subsets of the chosen open cover, say $ U_{i_0}, U_{i_1}, \dots, U_{i_k}$ with the understanding that $ i_k=i_0$. More precisely, $ \gamma$ is the union of paths $ \gamma_0, \dots, \gamma_k$ valued in $ U_{i_0}, \dots, U_{i_k}$ with the understanding that $ \gamma_0$ starts at $ \ell$ and ends at a point $ \ell_1$ where $ \gamma_1$ also starts, and so on, until $ \gamma_k$ that starts at a point $ \ell_{k-1} $ and ends at $ \ell_{k}=\ell$.
Parallel transportation $\mathrm{PT}_\gamma $ along $ \gamma $  
is, under the previously chosen isomorphism $ \mathcal T_\ell\simeq \mathcal T_0$, a symmetry of $ \mathcal T_0$ given by 
  \begin{equation}\label{eq:lifttoP}  \Phi(\gamma_k) \circ s_{i_ki_{k-1}}(\ell_k)  \circ \Phi(\gamma_{k-1})  \circ \dots \circ  s_{i_2i_1}(\ell_2) \circ \Phi(\gamma_1) \circ  s_{i_1i_0}(\ell_1)  \circ \Phi(\gamma_0) \end{equation}
In particular, it always belongs to $ H$. Now, consider the subset $H'\subset H$ obtained by composing elements as above, computed with respect to any loops on $L$, with inner symmetries of $ \mathcal T_0$. Let us show that $H=H'$. To start with, $H'$ contains  $\mathrm{Inner}(\mathcal T_0) $ by construction. Second,  
for a  loop as above, the class modulo $\mathrm{Inner}(\mathcal T_0) $ of the parallel transportation is
\begin{equation}\label{eq:liftGalois}
    \bar{s}_{i_ki_{k-1}}(\ell_k)  \circ  \dots \circ  \bar{s}_{i_2i_1}(\ell_2) \circ   \bar{s}_{i_1i_0}(\ell_1)
\end{equation}
 where the horizontal bar stands for the quotient modulo inner symmetries.  The set of such elements is precisely the set of all possible lifts in the Galois bundle $ \tilde L\to L$ with respect to all loops in $L$, it is therefore  $\pi_1(\tilde L,L)$ by definition of a Galois bundle. This shows two points: that the Galois bundle associated to  $\cF(\tilde{L}, H, P)$ is $ \tilde L \to L$ since the expression \eqref{eq:liftGalois} is equal to the identity element if and only if the loop $ \gamma$ has a lift to a loop in $ \tilde L$. It also shows that $H=H'$ since $H'$ has the same quotient as $H$ modulo $\mathrm{Inner}(\mathcal T_0) $. 

Let us show that the principal $H$-bundle $ \hat P$ over $L$ that appears in the \data{} of $\cF(\tilde{L}, H, P)$ coincides with $ P$.
Let us compare the sets  of isomorphisms $ P_{\ell'} \simeq P_\ell \simeq H $ obtained using local trivializations and parallel transportations along loops, for both principal bundles. 
For $P$,  we choose $ \gamma $ a fixed path as \eqref{eq:lifttoP}, except that it starts at $ \ell$ and ends at $ \ell'$ but may not be a loop, and we have a natural identification with $H$ given by:
$$  P_{\ell'} \simeq  \left\{  s_{i_ki_{k-1}}(\ell_k)  \circ  \dots  \circ s_{i_2i_1}(\ell_2) \circ   s_{i_1i_0}(\ell_1) \circ h  \, \middle| \, h \in H \right\}   .$$
 For $\hat P$, the fiber over $ \ell'$ is the set
$$  \hat P_{\ell'} \simeq  \left\{    \Phi(\gamma_k) \circ s_{i_ki_{k-1}}(\ell_k)  \circ \Phi(\gamma_{k-1})  \circ \dots \circ  s_{i_2i_1}(\ell_2) \circ \Phi(\gamma_1) \circ  s_{i_1i_0}(\ell_1)  \circ \Phi(\gamma_0) \circ \psi \right\}   $$
where $ \psi $ runs over arbitrary inner automorphisms of $ \mathcal T_0$, $ \gamma$ runs over arbitrary paths from $ \ell$ to $ \ell'$, and $ i_0,\dots,i_k, \ell_0, \dots, \ell_k $ are as in \eqref{eq:lifttoP}.  
Now, these two sets are identical by definition of $ H$.
The claim therefore follows. 
\end{proof}
\end{blue}


\begin{rem}
The above reconstruction procedure can be described in a very easy but not very rigorous way, following the construction of Example \ref{ex:ppalbundleSF}, applied to the $H$ action on $ \mathbb R^d$ and to the principal bundle  $ P \to L$. 
It amounts to to consider the associated bundle for the  $H$ action on $ \mathbb R^d$ over $\leaf$:
\begin{equation}
\label{eq:defT}
T \coloneqq \frac{P \times \mathbb R^d }{H}.
\end{equation}
with $H$ acting on $\mathbb R^d$ through its inclusion into ${\mathrm{Sym}}(\mathcal T_0) \subset {\mathrm{Diff}}(\mathbb R^d, 0) $, and to equip it with the projection on $T$ of the direct product of the singular foliation associated to the $H$-action on $ \mathbb R^d$, i.e.\ $ \mathcal T_0$, with the singular foliation of all vector fields on $P$. 
However, since $H$ only acts formally on $\mathbb R^d $, and is an infinite dimensional diffeological space, it is not trivial to make a rigorous proof out of this idea.
\end{rem}

\subsection{Conclusion}

In the two previous sections, we went from a formal singular foliation to a \data{} and from a \data{} to a formal singular foliation. 


\begin{theorem}\label{thm:classification}
    Let $\leaf$ be a 
 manifold and $(\mathbb R^d, \mathcal T_0) $ be a formal singular foliation near $0$. Then there is a one-to-one correspondence between:
\begin{enumerate}
\item[(i)] formal singular foliations along the leaf $\leaf $ with transverse model $(\mathbb R^d, \mathcal T_0)$,  
\item[(ii)] equivalence classes of \datas{} (see  Definition \ref{def:transversedata}),
\item[(iii)] equivalences classes made of a group morphism $\Xi \colon \pi_1(L) \to {\mathrm{Out}}(\mathcal T_0)  $ and a principal $H$-bundle $P$, with $H \subset {\mathrm{Sym}}(\mathcal T_0)$ the inverse image through the natural projection ${\mathrm{Sym}}(\mathcal T_0) \to {\mathrm{Out}}(\mathcal T_0) $ of $ \Xi(\pi_1(L))$.
\end{enumerate}
\end{theorem}

\begin{proof}
\begin{blue}
The equivalence of items (ii) and (iii) was established in Remark \ref{rem:Pgivestriple}. Proposition \ref{prop:leafdata} describes a map from item (i) to item (ii). Lemma \ref{lem:transversedataisassocdata} states that it is surjective by providing an explicit left inverse of the previous map.  We have to check that this map is also injective.
Let $ \mathcal F,\mathcal F'$ be two formal singular foliations near the leaf $ L$ that have the same \data{} $ (\tilde L,H,P,)$. By construction, $P$ is a set of isomorphisms from a fiber $ \mathcal T_{\ell'}$ of $\mathcal F$ to $ \mathcal T_0$.
But $P$ is also a set of isomorphisms from a fiber $ \mathcal T_{\ell'}'$ of $\mathcal F'$ to $ \mathcal T_0$.
Any choice of local sections of $(s_i)_{i \in I} $ of $P$ gives therefore isomorphisms $  \mathcal T_{\ell'} \simeq \mathcal T_{0}$ and  $  \mathcal T_{\ell'}' \simeq \mathcal T_{0}$ for all $\ell' \in U_i$. Composing the first isomorphism with the inverse of the second one,  we get an isomorphism  of singular foliation $   \mathcal T_{\ell'} \simeq \mathcal T_{\ell'}'$. Now, this isomorphism does not depend on the choice of the local sections $ (s_i)_{i\in I}$, and therefore not on the choice of the index $i$ such that $ \ell' \in U_i$. These local isomorphisms glue therefore to a global isomorphism $ \psi$ from the formal neighborhood of $ L$ on which $ \mathcal F$ is defined to the one on which $ \mathcal F'$ is defined, that intertwines the induced vertical singular foliations. 
Let us show that it intertwines $\mathcal F  $ and $ \mathcal F'$.
The local section $ s_i \colon U_i \to P$ defines in fact two isomorphisms of singular foliations: 
  $$ \mathcal F|_{U_i}  \simeq U_i \times \mathcal T_0 \hbox{ and } \mathcal F|_{U_i}'  \simeq U_i \times \mathcal T_0 .$$
  By construction, $ \psi$ is the composition of the first one with the inverse of the second one.
 Hence, the global isomorphism $\psi$ of formal neighborhoods intertwines $\mathcal F $ and $ \mathcal F'$.\end{blue} 
\end{proof}



\begin{rem}
Given an embedded leaf $L$ of a smooth singular foliation on $M$, the constructions above can be applied to the induced formal singular foliation
of Example \ref{decomp:step5}.
It yields some \data{}. 
Theorem \ref{thm:classification} implies that two smooth singular foliations that have the same leaf $L $ admitting the same \datas{} are formally equivalent along $L$.
It does not mean that they are isomorphic in neighborhoods of these leaves.
\end{rem}

\section{Classification II: Particular Cases}
\label{sec:ClassII}

We now use Theorem \ref{thm:classification} to give an even simpler answer to Question \ref{EssentialQuestions}.

The great advantage of the next description is that the principal bundle is now a principal bundle for a {\textit{finite dimensional}} Lie group, and not a diffeological Lie group.

\begin{theorem}
\label{thm:classificationSimplified}
    Let $\leaf$ be a 
 manifold and $(\mathbb R^d, \mathcal T_0) $ be a formal singular foliation near $0$. Then there is a one-to-one correspondence between:
\begin{enumerate}
\item[(i)] formal singular foliations along the leaf $\leaf $ with transverse model $(\mathbb R^d, \mathcal T_0)$, and 
\item[(ii)] equivalence classes of triples as follows:
\begin{enumerate}
  \item 
a Galois cover $\tilde{L} \to L $ (we denote by $ \pi_1(\tilde L, L)$ its Galois group),
\item a group extension of the form\footnote{Said otherwise: a subgroup $H$ of ${\mathrm{Sym}}(\mathcal T_0)$ containing $\mathrm{Inner}(\mathcal T_0) $ and equipped with a surjective group morphism onto $\pi_1(\tilde L, L) $ with kernel $\mathrm{Inner}(\mathcal T_0)$.} $ \xymatrix{   \mathrm{Inner}(\mathcal T_0) \ar@{^(->}[r]&H \ar@{->>}[r]&  \pi_1(\tilde L,\leaf)  }  $ with $H \subset {\mathrm{Sym}}(\mathcal T_0)$, 
\item  an extension of $ \tilde{L} \to L  $ to an $\tilde{H}_2 := H/ {\mathrm{Inner}}(\mathcal T_0)_{\geq 2} $-principal bundle\footnote{$\tilde{H}_2 $ is of course a finite dimensional Lie group}.
\end{enumerate}
\item[(iii)] equivalence classes  of pairs made of 
\begin{enumerate}
  \item[(a)] a group morphism $\Xi$ from $\pi_1(\leaf) $ to ${\mathrm{Out}}(\mathcal T_0) $ (= the \pairing{})  
\item[(b)]  an extension of $ \tilde{L} \to L  $ to an $ \tilde{H}_2$-principal bundle, where $\tilde{H}_2 $ is the inverse image of the image of $\Xi$ through the natural projection 
$    \frac{\mathrm{Sym}(\mathcal T_0)}{\mathrm{Inner}(\mathcal T_0)_{\geq 2}} \to  {\mathrm{Out}}(\mathcal T_0) $.
 \end{enumerate}
\end{enumerate}
In cases (ii) and (iii) above, the equivalence classes are obtained by considering the natural actions of $\mathrm{Sym}(\mathcal T_0)$ on items (b) and (c) in (ii) and of $\mathrm{Out}(\mathcal T_0)$ on items (a) and (b) of (iii) respectively.
\end{theorem}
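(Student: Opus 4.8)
The plan is to derive this simplified classification directly from Theorem~\ref{thm:classification} by replacing the diffeological (infinite-dimensional) principal bundle appearing there with a finite-dimensional one, using Proposition~\ref{prop:simplifyBundle}. Theorem~\ref{thm:classification} already furnishes a bijection between item (i) and equivalence classes of full \datas{} $(\tilde{L}, H, P)$, where $P \to L$ is a diffeological $H$-principal bundle extending the Galois cover $\tilde{L} \to L$. It therefore suffices to show that the assignment $P \mapsto P_2 := P/\mathrm{Inner}(\mathcal{T}_0)_{\geq 2}$ sets up a bijection, compatible with all the structure, between these full \datas{} and the simplified triples of item (ii).

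First I would invoke Proposition~\ref{prop:simplifyBundle} with the group $H$ of the \data{}: it gives a one-to-one correspondence between diffeological $H$-principal bundles over $L$ and ordinary $H/\mathrm{Inner}(\mathcal{T}_0)_{\geq 2}$-principal bundles over $L$, realised precisely by $P \mapsto P/\mathrm{Inner}(\mathcal{T}_0)_{\geq 2}$. I would then check that $P_2$ remains an extension of the same Galois cover: since $\mathrm{Inner}(\mathcal{T}_0)_{\geq 2}$ is normal in $\mathrm{Inner}(\mathcal{T}_0)$, one has
\begin{equation*}
P_2 \big/ \bigl(\mathrm{Inner}(\mathcal{T}_0)/\mathrm{Inner}(\mathcal{T}_0)_{\geq 2}\bigr) \;=\; P/\mathrm{Inner}(\mathcal{T}_0) \;=\; \tilde{L},
\end{equation*}
so the triple $(\tilde{L}, H, P_2)$ is exactly a datum of type (ii). Conversely, given such a $P_2$, the same proposition produces a unique diffeological $H$-bundle $P$ lifting it, and the two constructions are mutually inverse.

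Next I would verify that the equivalence relations correspond. The key point is that $\mathrm{Inner}(\mathcal{T}_0)_{\geq 2}$ is normal not only in $H$ but in all of $\mathrm{Sym}(\mathcal{T}_0)$: any $\psi \in \mathrm{Sym}(\mathcal{T}_0)$ fixes $0$ and preserves $\mathcal{T}_0$, hence $\psi_\ast$ preserves the order filtration $(\mathcal{T}_0)_{\geq 2}$ and therefore the subgroup generated by $\mathrm{exp}\bigl((\mathcal{T}_0)_{\geq 2}\bigr)$. Consequently conjugation by $\psi$ descends to the quotients, and the $\mathrm{Sym}(\mathcal{T}_0)$-action defining equivalence of full \datas{} matches the action defining equivalence of the simplified triples; the correspondence of item (i) with item (ii) then follows.

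Finally, the equivalence of items (ii) and (iii) is the finite-dimensional counterpart of Remark~\ref{rem:Pgivestriple}. The extension datum $H$ is entirely recorded by the \pairing{} $\Xi \colon \pi_1(L) \to \mathrm{Out}(\mathcal{T}_0)$, whose image determines $H$ as the preimage in $\mathrm{Sym}(\mathcal{T}_0)$ and whose kernel determines the Galois cover $\tilde{L}$; passing to the quotient gives $\tilde{H}_2 = H/\mathrm{Inner}(\mathcal{T}_0)_{\geq 2}$, the inverse image of $\mathrm{im}(\Xi)$ under $\mathrm{Sym}(\mathcal{T}_0)/\mathrm{Inner}(\mathcal{T}_0)_{\geq 2} \to \mathrm{Out}(\mathcal{T}_0)$. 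Thus a pair $(\Xi, P_2)$ carries the same information as $(\tilde{L}, H, P_2)$, and the $\mathrm{Out}(\mathcal{T}_0)$-action on (iii) corresponds to the $\mathrm{Sym}(\mathcal{T}_0)$-action on (ii). The main obstacle I anticipate is bookkeeping rather than conceptual: one must make sure that Proposition~\ref{prop:simplifyBundle}, which is stated for a fixed structure group, interacts correctly with the equivalence that allows $H$ to vary by conjugation, and that the extension condition over $\tilde{L}$ is preserved throughout --- both handled by the normality of $\mathrm{Inner}(\mathcal{T}_0)_{\geq 2}$ in $\mathrm{Sym}(\mathcal{T}_0)$.
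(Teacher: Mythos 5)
Your proposal is correct and follows essentially the same route as the paper's own (very terse) proof: Theorem~\ref{thm:classification} combined with Proposition~\ref{prop:simplifyBundle} gives the equivalence of (i) and (ii), and the equivalence of (ii) and (iii) is argued along the lines of Remark~\ref{rem:Pgivestriple}. The details you supply beyond the paper's proof --- that $P/\mathrm{Inner}(\mathcal{T}_0)_{\geq 2}$ still extends the same Galois cover, and that normality of $\mathrm{Inner}(\mathcal{T}_0)_{\geq 2}$ in all of $\mathrm{Sym}(\mathcal{T}_0)$ makes the equivalence relations descend --- are exactly the bookkeeping the paper leaves implicit, and they check out.
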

\begin{proof} 
Together with Theorem \ref{thm:classification}, Proposition \ref{prop:simplifyBundle} implies the equivalence between (i) and (ii).
The equivalence between (ii) and (iii) follows the same lines as the one in 
Remark \ref{rem:Pgivestriple}.
\end{proof}

\begin{rem}
\label{rem:holonomygroupoid3}
Since $I_0 \mathcal T_0 \subset (\mathcal T_0)_{\geq 2} $, it follows from Remark \ref{rem:holonomygroupoid2}
that the principal bundle in item c) of Theorem \ref{thm:classificationSimplified} is in fact a quotient of the principal bundle associated to the holonomy groupoid of the leaf.
\end{rem}

\begin{rem}\label{rem:CookingRecipe2}
\normalfont
Let us recapitulate the construction of the formal singular foliation $\cF$ along the leaf $L$ associated to a triple $(\tilde{L},\tilde{H}_2,\tilde{P}_2) $ as in item (ii) of Theorem \ref{thm:classificationSimplified}.
The first step consists in building an $H$-principal bundle $ P$ over $L$ whose quotient modulo $ {\mathrm{Inner}}(\mathcal T_0)_{\geq 2} $ is  $\tilde{P}_2 $. This requires infinitely many operations, as in the proof of Proposition \ref{prop:simplifyBundle}, to lift the $1$-cocycle on $ L$ defining $\tilde{P}_2 $ to a converging sequence of $1$-cocycles defining principal $H/{\mathrm{Inner}}(\mathcal T_0)_{\geq n}$-bundles  $\tilde{P}_n $, for all $ n \geq 2$, extending one another. 
When these operations are completed, one obtains a \data{} $ (\tilde{L},H,P)$. 
One can then apply the reconstruction procedure of
Section \ref{sec:reconstruction}. 
\end{rem}

\begin{rem}
\label{rem:througInfinite}
An important consequence of the description given in Remark \ref{rem:CookingRecipe2} is that, in general, there might be no way to reconstruct the singular foliation $\mathcal F $ staying within finite dimensional differential geometry. 
The issue is that there is in general no  $ k \in \mathbb N$ such that a group action of ${\mathrm{Inner}}(\mathcal T_0)/{\mathrm{Inner}}(\mathcal T_0)_{\geq k} $ on $ \mathbb R^d$ exists, and describes $\mathcal T_0  $.  (If yes, then we can apply the construction of Example \ref{ex:ppalbundleSF} for a finite dimensional principal bundle.)
\end{rem}

\subsection{Simply-connected leaves}

Let us now consider the case of simply connected leaves, as in \cite{Ryvkin2}.



\begin{cor}
\label{coro:simply-connected-case}
 Let $\leaf$ be a 
 simply-connected manifold and $(\mathbb R^d, \mathcal T_0) $ be a formal singular foliation near $0$. Then there is a one-to-one correspondence between:
\begin{enumerate}
\item[(i)] formal singular foliations along the leaf $\leaf $ with transverse model  $(\mathbb R^d, \mathcal T_0)$, and 
\item[(ii)] $\mathrm{Inner}(\mathcal T_0)/\mathrm{Inner}(\mathcal T_0)_{\geq 2} $-principal bundles over $\leaf $.
\end{enumerate}
\end{cor}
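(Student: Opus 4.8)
The plan is to specialize the simplified classification of Theorem \ref{thm:classificationSimplified}, item (iii), to the hypothesis $\pi_1(\leaf)=\{1\}$. First I would observe that the only group morphism $\Xi\colon \pi_1(\leaf)\to \mathrm{Out}(\cT_0)$ is the trivial one, so its image is the trivial subgroup of $\mathrm{Out}(\cT_0)$. Consequently the Galois cover associated to $\ker\Xi=\pi_1(\leaf)$ is $\leaf$ itself (the universal cover quotiented by the whole fundamental group), and the \pairing{} carries no information, which removes item (a) of Theorem \ref{thm:classificationSimplified}(iii) entirely.

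Next I would identify the structure group $\tilde H_2$. By definition it is the preimage of $\mathrm{im}(\Xi)=\{1\}$ under the projection $\mathrm{Sym}(\cT_0)/\mathrm{Inner}(\cT_0)_{\geq 2}\twoheadrightarrow \mathrm{Out}(\cT_0)$, i.e.\ the kernel of that projection. Since $\mathrm{Out}(\cT_0)=\mathrm{Sym}(\cT_0)/\mathrm{Inner}(\cT_0)$, this kernel is exactly
\begin{equation*}
\tilde H_2 = \mathrm{Inner}(\cT_0)/\mathrm{Inner}(\cT_0)_{\geq 2}.
\end{equation*}
As $\tilde L=\leaf$, an ``extension of $\tilde L\to\leaf$ to a $\tilde H_2$-principal bundle'' is simply a principal $\mathrm{Inner}(\cT_0)/\mathrm{Inner}(\cT_0)_{\geq 2}$-bundle over $\leaf$, which exhibits the objects on side (ii). Equivalently, one may run the argument through the unsimplified Theorem \ref{thm:classification}: simple-connectedness forces $H=\mathrm{Inner}(\cT_0)$, so the \data{} is a diffeological $\mathrm{Inner}(\cT_0)$-bundle, which Corollary \ref{coro:simplifyBundle} turns into a finite-dimensional $\mathrm{Inner}(\cT_0)/\mathrm{Inner}(\cT_0)_{\geq 2}$-bundle.

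The step I expect to be the main obstacle is the bookkeeping of the equivalence relation. In Theorem \ref{thm:classificationSimplified}(iii) the classifying objects are taken up to the natural $\mathrm{Out}(\cT_0)$-action on the pair (morphism, bundle); with $\Xi$ trivial this action fixes item (a) but still acts on the $\tilde H_2$-bundle through the conjugation automorphisms of $\mathrm{Inner}(\cT_0)/\mathrm{Inner}(\cT_0)_{\geq 2}$ induced by elements of $\mathrm{Sym}(\cT_0)$. I would verify that this is precisely the equivalence under which the correspondence of the corollary is to be read: conjugation by elements of $\mathrm{Inner}(\cT_0)$ gives inner automorphisms and hence isomorphic bundles, so the residual action factors through $\mathrm{Out}(\cT_0)$. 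That this residual action can be genuinely nontrivial --- for instance the orientation-reversing class for the concentric-circle model of Example \ref{ex:ConcentricCirclesInnAndOut}, which negates the Euler class of an $S^1$-bundle --- is exactly why the bundles are classified only up to this $\mathrm{Out}(\cT_0)$-action and not literally on the nose. The remaining verifications, namely that the maps $\cF\mapsto$ bundle and its inverse from Theorem \ref{thm:classification} descend to these reduced data, are then routine once the identification $\tilde H_2=\mathrm{Inner}(\cT_0)/\mathrm{Inner}(\cT_0)_{\geq 2}$ is in place.
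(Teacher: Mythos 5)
Your proof is correct and follows essentially the same route as the paper: its entire proof consists of noting that simple connectedness forces $\leaf(\cF)=\leaf$ and $H(\cF)=\mathrm{Inner}(\cT_0)$ and then invoking Theorem \ref{thm:classificationSimplified} (which is itself Theorem \ref{thm:classification} combined with Proposition \ref{prop:simplifyBundle}, so both of your routes are the paper's own). The one place where you go beyond the paper is the bookkeeping of the equivalence relation, and there your reading is the more careful one. The equivalence classes in Theorem \ref{thm:classificationSimplified} are explicitly taken modulo the natural $\mathrm{Sym}(\cT_0)$-, resp.\ $\mathrm{Out}(\cT_0)$-, action, and this residual action does not disappear when $\pi_1(\leaf)$ is trivial; the corollary as printed (``principal bundles over $\leaf$'', with no further identification) quietly drops it, and the paper's two-line proof does not address why that would be harmless. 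Your concentric-circles example shows it is not: for $\leaf=\mathbb{S}^2$ one has $\mathrm{Inner}(\cT_0)/\mathrm{Inner}(\cT_0)_{\geq 2}\simeq S^1$, the reflection class in $\mathrm{Out}(\cT_0)\simeq\mathrm{Diff}(\mathbb{R},0)_+\times\mathbb{Z}/2\mathbb{Z}$ (Example \ref{ex:ConcentricCirclesInnAndOut}) acts on it by inversion, and twisting an $S^1$-bundle by inversion negates its Euler class; since a \data{} and its conjugate under a reflection $\rho\in\mathrm{Sym}(\cT_0)$ are equivalent by Definition \ref{def:transversedata}, the bundles of Euler class $n$ and $-n$ reconstruct isomorphic foliations. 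So the bijection holds with isomorphism classes of bundles taken modulo the $\mathrm{Out}(\cT_0)$-action (giving $\mathbb{Z}/(n\sim -n)$ rather than $\mathbb{Z}$ in this example), exactly as you say; the same caveat applies to Remark \ref{rem:TrivialOuterHolonomyClassif} and Example \ref{ex:vectorBundles}, where what is classified are orientable rather than oriented bundles. Apart from this refinement, your identification of $\tilde H_2$ with $\mathrm{Inner}(\cT_0)/\mathrm{Inner}(\cT_0)_{\geq 2}$, the triviality of the \pairing{}, and the passage through Corollary \ref{coro:simplifyBundle} coincide with the paper's argument.
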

\begin{proof}
For $\leaf $ a simply connected manifold, for any \data{} $(L(\mathcal F),H(\mathcal F),P(\mathcal F))$ we have $L(\mathcal F)= L $ and $H(\mathcal F) ={\mathrm{Inner}}(\mathcal T_0)$.
The result then follows from Theorem \ref{thm:classificationSimplified}.
\end{proof}

\begin{rem}\label{rem:TrivialOuterHolonomyClassif}
We actually can classify all foliations with a trivial \pairing{}, not just the simply-connected case:
There is a one-to-one correspondence between:
\begin{enumerate}
\item[(i)] formal singular foliations along the leaf $\leaf $ with transverse model  $(\mathbb R^d, \mathcal T_0)$ and with trivial \pairing{}, and 
\item[(ii)] $\mathrm{Inner}(\mathcal T_0)/\mathrm{Inner}(\mathcal T_0)_{\geq 2} $-principal bundles over $\leaf $.
\end{enumerate}
In particular, if $\mathcal T_0$ is made of all vector fields vanishing at $0$, so that $ {\mathrm{Out}}(\mathcal T_0) $ is $\mathbb Z/2\mathbb Z $, the \pairing{} is just the orientation map of $ T$, and we re-obtain the one-to-one correspondence 
 between ${\mathrm{GL}}_{d} (\mathbb R)$-principal bundles over $ L$ and singular foliations along a leaf $L$ admitting vector fields vanishing at $0$ as transverse model on an orientable transverse bundle $T$.  
\end{rem}

\begin{rem}
\label{rem:holonomygroupoid4}
It follows from Remark \ref{rem:holonomygroupoid3}
and Corollary \ref{coro:simply-connected-case}
that formal singular foliations along a simply-connected leaf $L$ with given transverse models are entirely determined by their holonomy groupoids along $L$ (in fact, even by a quotient of their holonomy groupoids).
\end{rem}


\begin{example}\label{ex:IntroQuestionsAbout2SphereAsLeaf}
\normalfont
We can also answer several questions of the introduction. Remark \ref{rem:concentric} and Corollary \ref{coro:simply-connected-case} imply that the only formal singular foliation along the leaf $\mathbb S^2 $ with transverse model the "spirals"  is the trivial product of $\mathbb S^2 $ with "spirals". In particular, there is no formal singular foliation on $\rmT\mathbb S^2 $ admitting the spiral singular foliation as transverse model, for its normal bundle would not be the trivial bundle. This answers question \textbf{Q2b}.
By contrast, there is one for which the transverse model is by concentric circles. For this transverse model, $ {\mathrm{Inner}}(\mathcal T_0)/{\mathrm{Inner}}(\mathcal T_0)_{\geq 2} \simeq S^1 $ and one can consider the usual $ S^1$-bundle given by the unit circles of $\rmT \mathbb S^2 $ for a given metric. The corresponding singular foliation is associated to the $ {\mathrm{SO}}(3) $-action on $ \rmT \mathbb S^2$, equivalently the singular foliation of all vector fields  $ X \in \mathfrak X(\rmT\mathbb S^2)$ such that $X[\phi ]=0$, with $ \phi$ the square of the norm of same metric on $\rmT\mathbb S^2 $. This answers question {\textbf{Q2a}}. \begin{blue} As already mentioned, there are also elementary answers, but we just presented a systematic one.\end{blue}
\end{example}

As a corollary of Corollary \ref{coro:simply-connected-case}, we recover a formal equivalent of the result of Ryvkin and the second author \cite{Ryvkin2}.

\begin{cor}
Let $\leaf$ be a 
 simply-connected manifold and $(\mathbb R^d, \mathcal T_0) $ is made of  vector fields vanishing at least quadratically at zero.
 Then the unique formal singular foliation along the leaf $\leaf $ and transverse model   $(\mathbb R^d, \mathcal T_0) $ is the trivial product of $(L, \mathfrak X(L))$ with $(\mathbb R^d, \mathcal T_0) $.
\end{cor}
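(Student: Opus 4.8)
The plan is to deduce this from Corollary \ref{coro:simply-connected-case} by showing that, under the quadratic-vanishing hypothesis, the group that classifies the foliations collapses to the trivial group, leaving exactly one foliation, which must then be the trivial product.

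First I would observe that if every formal vector field in $\mathcal T_0$ vanishes at least quadratically at $0$, then by the very definition of $(\mathcal T_0)_{\geq 2}$ as the fields of $\mathcal T_0$ vanishing at order $\geq 2$ at $0$, we have $\mathcal T_0 = (\mathcal T_0)_{\geq 2}$. Consequently the group $\mathrm{Inner}(\mathcal T_0)$, generated by $\mathrm{exp}(X)$ for $X \in \mathcal T_0$, coincides with the subgroup $\mathrm{Inner}(\mathcal T_0)_{\geq 2}$, generated by $\mathrm{exp}(X)$ for $X \in (\mathcal T_0)_{\geq 2}$. Hence the quotient $\mathrm{Inner}(\mathcal T_0)/\mathrm{Inner}(\mathcal T_0)_{\geq 2}$ is the trivial group.

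Next, since $\leaf$ is simply connected, Corollary \ref{coro:simply-connected-case} provides a one-to-one correspondence between formal singular foliations along $\leaf$ with transverse model $(\mathbb R^d, \mathcal T_0)$ and principal $\mathrm{Inner}(\mathcal T_0)/\mathrm{Inner}(\mathcal T_0)_{\geq 2}$-bundles over $\leaf$. As this structure group is trivial, there is, up to isomorphism, exactly one such principal bundle (the trivial one $\leaf$ itself), and therefore exactly one formal singular foliation along $\leaf$ with the prescribed transverse model.

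Finally I would exhibit the trivial product $(\leaf, \mathfrak X(\leaf)) \times (\mathbb R^d, \mathcal T_0)$ as a foliation with the required properties: it admits $\leaf$ as a leaf and has $(\mathbb R^d, \mathcal T_0)$ as transverse model by construction, so it is one of the foliations covered by the correspondence. Since there is only one such foliation, the trivial product must be it. The only point needing a brief verification is that this product is the foliation attached by the reconstruction of Proposition \ref{pro:rec} to the trivial \data{} $(\tilde L, H, P) = (\leaf, \mathrm{Inner}(\mathcal T_0), \leaf \times \mathrm{Inner}(\mathcal T_0))$; this is immediate. I do not expect any real obstacle here: the whole content is the collapse of the classifying group, which follows directly from the definitions of $(\mathcal T_0)_{\geq 2}$ and $\mathrm{Inner}(\mathcal T_0)_{\geq 2}$.
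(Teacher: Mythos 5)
Your proposal is correct and follows exactly the paper's own argument: the quadratic-vanishing hypothesis forces $\mathrm{Inner}(\mathcal T_0)=\mathrm{Inner}(\mathcal T_0)_{\geq 2}$, so the classifying group $\mathrm{Inner}(\mathcal T_0)/\mathrm{Inner}(\mathcal T_0)_{\geq 2}$ of Corollary \ref{coro:simply-connected-case} is trivial, and the unique foliation must be the trivial product. Your write-up merely spells out the identification of the unique foliation with the trivial product, which the paper leaves as ``obvious.''
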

\begin{proof}
In this case, $\mathrm{Inner}(\mathcal T_0)/\mathrm{Inner}(\mathcal T_0)_{\geq 2}=\{0\}$. The result is then obvious.
\end{proof}

Here is an other consequence of Corollary \ref{coro:simply-connected-case}, since any principal bundle over a contractible manifold is trivial.

\begin{cor}
Let $\leaf$ be a contractible manifold.
 Any formal singular foliation along the leaf $\leaf $ is the trivial product of $(L, {\mathfrak X}(L))$ with its transverse model.
\end{cor}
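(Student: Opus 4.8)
The plan is to read this statement off directly from Corollary \ref{coro:simply-connected-case}, combined with the elementary fact (already flagged in the sentence preceding the statement) that every principal bundle over a contractible base is trivial. First I would observe that a contractible manifold $\leaf$ is in particular simply connected, so $\pi_1(\leaf)=0$ and the hypotheses of Corollary \ref{coro:simply-connected-case} are met for \emph{any} transverse model $(\mathbb R^d,\mathcal T_0)$.

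Next, let $\mathcal F$ be an arbitrary formal singular foliation along $\leaf$, and let $(\mathbb R^d,\mathcal T_0)$ be its transverse model, which is well defined up to isomorphism by Proposition \ref{prop:IsomorphicTransverseFoliations}. By Corollary \ref{coro:simply-connected-case}, the map sending such a foliation to its associated $\mathrm{Inner}(\mathcal T_0)/\mathrm{Inner}(\mathcal T_0)_{\geq 2}$-principal bundle over $\leaf$ is a bijection onto the set of isomorphism classes of such bundles. Since $\leaf$ is contractible, every principal bundle over $\leaf$ is trivial, so this set is a singleton. Hence, up to equivalence, there is exactly one formal singular foliation along $\leaf$ with transverse model $(\mathbb R^d,\mathcal T_0)$.

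To name that unique foliation, I would exhibit the trivial product $(\leaf,\mathfrak X(\leaf))\times(\mathbb R^d,\mathcal T_0)$ as an explicit representative. It is a formal singular foliation along $\leaf$ with transverse model $\mathcal T_0$, and by the reconstruction of Subsection \ref{sec:Reconstruction} it is built from the \data{} $(\leaf,\mathrm{Inner}(\mathcal T_0),\leaf\times\mathrm{Inner}(\mathcal T_0))$, whose associated finite-dimensional bundle $\leaf\times\bigl(\mathrm{Inner}(\mathcal T_0)/\mathrm{Inner}(\mathcal T_0)_{\geq 2}\bigr)$ is the trivial one. Thus the trivial product corresponds to the trivial bundle under the correspondence of Corollary \ref{coro:simply-connected-case}, and by the uniqueness just established $\mathcal F$ must coincide with it.

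There is essentially no genuine obstacle here: the entire content is packaged in Corollary \ref{coro:simply-connected-case}, and contractibility is used only through the vanishing of $\pi_1(\leaf)$ and the triviality of bundles over $\leaf$. The one point deserving a word of justification is the identification of the trivial product with the trivial bundle, which is immediate from the reconstruction data above (alternatively, one may invoke Remark \ref{rem:TrivialOuterHolonomyClassif}, since the trivial product has trivial \pairing{}).
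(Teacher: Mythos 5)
Your proposal is correct and follows essentially the same route as the paper, which derives this statement directly from Corollary \ref{coro:simply-connected-case} together with the triviality of principal bundles over a contractible base. Your extra step of explicitly matching the trivial product to the trivial bundle via the reconstruction (or via Remark \ref{rem:TrivialOuterHolonomyClassif}) just makes explicit what the paper leaves implicit.
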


In fact, we can unify and generalize the last two corollaries by making use of the fact that a principal bundle over a simply connected manifold is trivial if and only if it admits a flat connection.

\begin{cor}\label{cor:TrivalOutHolAndTrivialFol}
A formal singular foliation $\cF$ along the leaf $\leaf $ with trivial \pairing{} is the trivial product of $(L, {\mathfrak X}(L))$ with its transverse model if and only if the \data{} (that is, the principal $\mathrm{Inner}(\cT_0)$-bundle) is trivial.

If $\leaf$ is simply connected, then $\cF$ is such a trivial product if and only if there is a  flat formal  $\cF$-connection.
\end{cor}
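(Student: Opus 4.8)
The plan is to derive both equivalences from the classification of Theorem~\ref{thm:classification}, combined with the dictionary between formal $\cF$-connections and connections on the principal bundle, and to pin down \emph{which} bundle corresponds to the trivial product. First I would record that triviality of the \pairing{} means precisely that its image in ${\mathrm{Out}}(\cT_0)$ is trivial, so that the associated Galois cover is $L(\cF)=L$ and the extension group is $H(\cF)={\mathrm{Inner}}(\cT_0)$. By Theorem~\ref{thm:classification} (equivalently Remark~\ref{rem:TrivialOuterHolonomyClassif}) the whole \data{} then reduces to a single diffeological principal ${\mathrm{Inner}}(\cT_0)$-bundle $P$ over $L$, and $\cF$ is determined up to isomorphism by the isomorphism class of $P$; by Corollary~\ref{coro:simplifyBundle}, triviality of $P$ is equivalent to triviality of its finite-dimensional reduction $P/{\mathrm{Inner}}(\cT_0)_{\geq 2}$.

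The next step is to match the trivial product with the trivial bundle. For the ``if'' direction, assuming $P=L\times{\mathrm{Inner}}(\cT_0)$ I choose the product (flat) connection and feed it into the reconstruction of Proposition~\ref{pro:rec}: the associated formal neighborhood is $T=L\times\mathbb R^d$, the horizontal lift is $\mathbb H(X)=X$, and the generators $\mathbb H(X)+\nu$ with $\nu\in\cT$ are exactly those of $\mathfrak X(L)\times\cT_0$, so $\cF$ is the trivial product. For the ``only if'' direction, assuming $\cF$ is the trivial product I compute its \data{} using the product $\cF$-connection $\mathbb H(X)=X$: all of its parallel transports $\mathrm{PT}_\gamma$ are the identity map of $\mathbb R^d$, so by \eqref{eq:YMgroupoidDef} the associated bundle $P(\cF)$ is the trivial ${\mathrm{Inner}}(\cT_0)$-bundle. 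Bijectivity in Theorem~\ref{thm:classification} turns these two computations into the first equivalence.

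For the second statement, simple connectivity of $L$ forces $\pi_1(L)=0$ and hence a trivial \pairing{}, so the first part applies and it remains only to show that $P$ is trivial if and only if a flat $\cF$-connection exists. Under the correspondence between formal $\cF$-connections and principal ${\mathrm{Inner}}(\cT_0)$-bundle connections on $P$ established at the end of Section~\ref{sec:Reconstruction}, a flat $\cF$-connection, i.e.\ one whose curvature in Corollary~\ref{cor:CurvatureOfFConn} vanishes, corresponds to a flat connection on $P$. A flat principal connection is a flat complete Ehresmann connection whose leaves are Galois covers of $L$ (as noted in the Conventions); since $L$ is simply connected the only such cover is $L$ itself, so these leaves provide global flat sections and $P$ is trivial. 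Conversely, a trivial bundle carries the product flat connection and thus a flat $\cF$-connection. Combined with the first part, this yields the stated equivalence.

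The hard part will be the diffeological bookkeeping. Because $P$ is only a diffeological bundle, the implication ``flat $\Rightarrow$ trivial over a simply connected base'' is cleanest when run through the finite-dimensional reduction $P/{\mathrm{Inner}}(\cT_0)_{\geq 2}$ of Corollary~\ref{coro:simplifyBundle}: a flat connection on $P$ descends to a flat connection on this finite-dimensional bundle, where the classical leaf/holonomy argument over a simply connected base literally applies to give triviality, and triviality then lifts back to $P$ by Corollary~\ref{coro:simplifyBundle}. I would likewise invoke this reduction rather than argue directly with a diffeological holonomy representation, which is where subtleties would otherwise arise.
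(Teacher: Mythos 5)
Your proof is correct and follows essentially the route the paper itself intends: the corollary is stated there without an explicit proof, resting on Remark \ref{rem:TrivialOuterHolonomyClassif} (classification by principal $\mathrm{Inner}(\mathcal T_0)$-bundles once the \pairing{} is trivial) together with the classical fact, quoted immediately before the statement, that a principal bundle over a simply connected base is trivial if and only if it admits a flat connection. Your extra steps---matching the trivial product with the trivial bundle via Proposition \ref{pro:rec} and the parallel-transport computation, and routing the ``flat $\Rightarrow$ trivial'' argument through the finite-dimensional reduction of Corollary \ref{coro:simplifyBundle} to avoid diffeological holonomy issues---merely fill in details the paper leaves implicit, and they do so correctly.
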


\subsection{Transversally quadratic singular foliations} 

Assume that the transverse model $ \mathcal T_0 $ is made of formal vector fields that vanish at least quadratically at $0$, \textit{i.e.}\  \emph{vanishes quadratically}. In this case, the exponential map  $ \mathcal T_0 \longrightarrow   {\mathrm{Inner}}(\mathcal T_0)$ 
 is one-to-one and is compatible with the diffeologies, so that 
 ${\mathrm{Inner}}(\mathcal T_0)$ is contractible by Lemma \ref{lem:exp2}.

\begin{cor}
\label{cor:transvQuadra}
Let $\leaf $ be a manifold and $(\mathbb R^d,\mathcal T_0) $ be a formal singular foliation near $0$ which vanishes quadratically.
There is a one-to-one correspondence between the three following sets:
\begin{enumerate}
\item[(i)] singular foliations along a leaf $\leaf $ with a transverse model $(\mathbb R^d,\mathcal T_0) $, and
\item[(ii)] equivalence classes of pairs made of a Galois cover $\tilde{L} \to L $ together with a group extension of the form
$$ {\mathrm{Inner}} (\mathcal T_0)\hookrightarrow H \to \pi_1(\tilde{\leaf}, \leaf) $$
where $H \subset {\mathrm{Sym}}(\mathcal T_0) $. The equivalence relation is given through conjugation by an element in ${\mathrm{Sym}}(\mathcal T_0) $.
\item[(iii)] equivalence classes group morphisms $\pi_1(L) \to {\mathrm{Out}}(\mathcal T_0) $. The equivalence relation is given through conjugation by an element of ${\mathrm{Out}}(\mathcal T_0) $.
\end{enumerate} 
\end{cor}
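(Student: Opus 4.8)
The plan is to deduce this directly from Theorem \ref{thm:classificationSimplified}, the only new input being the fact recorded just before the corollary: when $\mathcal T_0$ vanishes quadratically one has $\mathcal T_0 = (\mathcal T_0)_{\geq 2}$, hence $\mathrm{Inner}(\mathcal T_0)_{\geq 2} = \mathrm{Inner}(\mathcal T_0)$ and (by Lemma \ref{lem:exp2}) $\mathrm{Inner}(\mathcal T_0)$ is contractible. Everything reduces to feeding this identity into the three data of the simplified classification.

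First I would establish the equivalence (i) $\Leftrightarrow$ (ii). By Theorem \ref{thm:classificationSimplified}, formal singular foliations along $\leaf$ with transverse model $\mathcal T_0$ are classified by triples consisting of a Galois cover $\tilde L \to L$, a group extension $\mathrm{Inner}(\mathcal T_0)\hookrightarrow H \twoheadrightarrow \pi_1(\tilde L,L)$ inside $\mathrm{Sym}(\mathcal T_0)$, and an extension of $\tilde L \to L$ to an $H/\mathrm{Inner}(\mathcal T_0)_{\geq 2}$-principal bundle. The key observation is that under our hypothesis the third datum becomes vacuous: since $\mathrm{Inner}(\mathcal T_0)_{\geq 2}=\mathrm{Inner}(\mathcal T_0)$ we get $H/\mathrm{Inner}(\mathcal T_0)_{\geq 2} = H/\mathrm{Inner}(\mathcal T_0)\cong \pi_1(\tilde L,L)$, and the normal subgroup $\mathrm{Inner}(\mathcal T_0)/\mathrm{Inner}(\mathcal T_0)_{\geq 2}$ along which one extends is trivial. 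Consequently the only $H/\mathrm{Inner}(\mathcal T_0)_{\geq 2}$-bundle extending the $\pi_1(\tilde L,L)$-bundle $\tilde L$ is $\tilde L$ itself; it always exists and is unique. Thus item (c) carries no information and drops out, leaving exactly the pair (Galois cover, group extension) of item (ii). The $\mathrm{Sym}(\mathcal T_0)$-conjugation equivalence is preserved since it already acts on the first two data as in Theorem \ref{thm:classificationSimplified}.

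Next I would establish (ii) $\Leftrightarrow$ (iii), following the bookkeeping of Remark \ref{rem:Pgivestriple} but now stripped of the (trivial) principal bundle. Given a pair as in (ii), the Galois cover yields a surjection $\pi_1(L)\twoheadrightarrow \pi_1(\tilde L,L)$, while the extension realizes $\pi_1(\tilde L,L)=H/\mathrm{Inner}(\mathcal T_0)$ as a subgroup of $\mathrm{Out}(\mathcal T_0)=\mathrm{Sym}(\mathcal T_0)/\mathrm{Inner}(\mathcal T_0)$; the composite is a group morphism $\pi_1(L)\to\mathrm{Out}(\mathcal T_0)$. Conversely, a morphism $\phi\colon\pi_1(L)\to\mathrm{Out}(\mathcal T_0)$ produces the Galois cover $\tilde L = L^{u}/\ker\phi$ (so $\pi_1(\tilde L,L)\cong\mathrm{im}\,\phi$) and the group $H$ as the preimage of $\mathrm{im}\,\phi$ under $\mathrm{Sym}(\mathcal T_0)\to\mathrm{Out}(\mathcal T_0)$. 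These assignments are mutually inverse, and conjugation by $\mathrm{Sym}(\mathcal T_0)$ on $H$ descends to conjugation by $\mathrm{Out}(\mathcal T_0)$ on $\phi$, because $\mathrm{Inner}(\mathcal T_0)$ is normal and hence acts trivially on $\mathrm{Out}(\mathcal T_0)$; surjectivity of $\mathrm{Sym}(\mathcal T_0)\to\mathrm{Out}(\mathcal T_0)$ shows every $\mathrm{Out}(\mathcal T_0)$-conjugation lifts, so the two equivalence relations match.

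I expect the main obstacle to be the first step, specifically the verification that the bundle datum of Theorem \ref{thm:classificationSimplified}(c) genuinely collapses. One must argue carefully that an ``extension'' along the trivial structure group $\mathrm{Inner}(\mathcal T_0)/\mathrm{Inner}(\mathcal T_0)_{\geq 2}=\{e\}$ both exists and is unique, so that neither extra moduli nor an obstruction (of the \v{C}ech-cohomological type flagged for central $K$ in the Conventions) can survive. This is precisely where the quadratic-vanishing hypothesis is consumed; the remainder is formal manipulation with the exact sequence \eqref{eq:defH} and the already-established classification theorems.
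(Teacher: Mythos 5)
Your proposal is correct and takes essentially the same route as the paper: the paper's proof deduces (i)$\Leftrightarrow$(ii) from Theorem \ref{thm:classification} together with Proposition \ref{prop:simplifyBundle}, noting that here $H/\mathrm{Inner}(\mathcal T_0)_{\geq 2}\simeq \pi_1(\tilde L,L)$ so that connected diffeological $H$-principal bundles are exactly Galois covers of $L$ --- which is the same collapse of the bundle datum you obtain by feeding $\mathrm{Inner}(\mathcal T_0)_{\geq 2}=\mathrm{Inner}(\mathcal T_0)$ (contractible, by Lemma \ref{lem:exp2}) into Theorem \ref{thm:classificationSimplified}, itself just the packaged form of those two results. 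Your (ii)$\Leftrightarrow$(iii) step (composing $\pi_1(L)\twoheadrightarrow\pi_1(\tilde L,L)\hookrightarrow\mathrm{Out}(\mathcal T_0)$ one way, and recovering $H$ as the preimage of $\mathrm{Im}(\phi)$ together with the Galois cover $L^u/\ker\phi$ the other way) is verbatim the paper's argument, with your check that the two conjugation equivalences match being a welcome extra detail the paper leaves implicit.
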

\begin{proof}
The equivalence between (i) and (ii) is an immediate consequence of Theorem \ref{thm:classification} and Proposition \ref{prop:simplifyBundle}, since in this case,  $ H / {\mathrm{Inner}}(\mathcal T_0)_{\geq 2} \simeq \pi_1(\tilde L, L) $, so that connected diffeological $H$-principal bundles over $\leaf $ are in one-to-one correspondence with Galois covers of $\leaf $. The map from (ii) to (iii) consists in mapping $\pi_1(L) $ to its quotient $\pi_1(\tilde{L},L)  $ then in mapping the latter to ${\mathrm{Out}}(\mathcal T_0) $ by using the inclusion $  H \subset {\mathrm{Sym}}(\mathcal T_0)$. The inverse consists to associating to a group morphism $ \phi$ as in (iii) the inverse image $H$ of ${\mathrm{Im}}(\phi) $ in ${\mathrm{Sym}}(\mathcal T_0) $ and the Galois cover $L^u/ {\mathrm{Ker}}(\phi) $, with $L^u$ the universal cover of $L$. 
\end{proof}

 \begin{example}
 \normalfont
For regular foliations, the transverse model is $\mathcal T_0=0 $. Corollary \ref{cor:transvQuadra} implies that there is a one-to-one correspondence between formal regular foliations along the leaf $\leaf $ and   group morphisms from $\pi_1(L) $ to $ {\mathrm{Diff}}(\mathbb R^d,0) $. This corresponds to the map called holonomy in the theory of regular foliations \cite{GroupoidBasedPrincipalBundles}.
 \end{example}

\begin{example}
\normalfont
We recover the description obtained by Francis in \cite{PhD_Francis,francis2023singular} for codimension $1$ foliations. In this case, there exists  $ n\geq 1 $ such that that $\mathcal T_0 $ is generated by $ t^n \partial_t$. 
For $n=1$, it amounts to classify line bundles by Example \ref{ex:vectorBundles}. For $ n \geq 2$, it amounts to consider equivalences classes of group morphisms from $ \pi_1(L)$ to the group of $(n-1)$-jets. 
\end{example}

\begin{example}
\normalfont
We recover the description obtained by Bischoff, del Pino, and Witte in \cite{BDW} for vector fields tangent to a submanifold up to a certain order, \emph{i.e} in the case when the transverse model is made of all vector fields vanishing at order $k+1$ on $\mathbb R^d$. More precisely, we recover Theorem 5.5 in \cite{BDW} (more precisely the formal equivalent of this result).
Both proofs are based on the observation that we cannot reconstruct the singular foliation as a quotient of an action of $\pi_1(L)$, and one has to use some infinite dimensional geometry. For us, the reconstruction consists in considering a group morphism from $ \pi_1(L)$ to the group of $k$-jets as a flat principal $J_k(\mathbb R^d)$ bundle ($J_k(\mathbb R^d)$ being the group of $k$-jets of diffeomorphism to a principal $\mathrm{Diff}(\mathbb R^ d,0)$-bundle), then dividing this bundle by the subgroup $H \subset \mathrm{Diff}(\mathbb R^ d,0)$ of all formal diffeomorphism whose $k$-jet lies in the image of the group morphism above. We base ourselves on the contractibility of $ \mathrm{Diff}(\mathbb R^ d,0)_{\geq k+1}$, which is also used in Proposition 5.8 in \cite{BDW}.
\end{example}





\begin{rem}
\label{rem:iscontractible}
The conclusion of Corollary \ref{cor:transvQuadra} still holds under the weaker assumption that the Lie group $\mathrm{Inner}(\mathcal T_0)/{\mathrm{exp}}(({\mathcal T_0)_{\geq 2}})$ is contractible. 
\end{rem}

\subsection{Singular foliations admitting flat \texorpdfstring{$ \mathcal F$-connections}{F-connections}}

Recall from Section \ref{sec:transver} that formal $\mathcal F $-connections always exist. But they need not be Lie algebra morphisms, \textit{i.e.}\ they might be non-flat. Here is an obvious result:

\begin{lemma}
\label{lem:existenceflat}
A formal singular foliation $ \mathcal F$ along the leaf $\leaf$ and transverse model  $(\mathbb R^d, \mathcal T_0) $ admits a flat $ \mathcal F$-connection if and only if the principal bundle $P(\mathcal F) $ in the \data{} admits a flat principal bundle connection.
\end{lemma}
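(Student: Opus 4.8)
The plan is to exploit the one-to-one correspondence between formal $\cF$-connections on $\cF$ and principal $H$-bundle connections on $P$ established at the end of Subsection~\ref{sec:Reconstruction}, and to show that under this correspondence \emph{flatness matches flatness}. By Lemma~\ref{lem:FoliationsArIsomorphictocanonicalFoliation} we may assume that $\cF = \cF(\tilde L, H, P)$ is the foliation reconstructed from its own \data{}, with $P = P(\cF)$; then Proposition~\ref{pro:rec} applies, and the $\cF$-connection corresponding to a principal $H$-connection on $P$ is exactly the horizontal lift $\mathbb H \colon \mathfrak X(L) \to \mathfrak X^{\mathrm{formal}}_\pi(T)$ of the associated Ehresmann connection on $T$.

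First I would express the curvature of the $\cF$-connection in terms of the data on $P$. By Corollary~\ref{cor:CurvatureOfFConn} the curvature $[\mathbb H(X), \mathbb H(X')] - \mathbb H([X,X'])$ lies in $\cT$ for all $X, X' \in \mathfrak X(L)$, and the reconstruction identifies $\cT$ with the fundamental vector fields of the $c_H(P)$-action on $T$. The claim I would verify is that this curvature is precisely the fundamental vector field attached to the curvature $2$-form $\Omega_P$ of the chosen connection on $P$, where $\Omega_P$ is regarded as valued in the adjoint bundle $\mathrm{ad}(P)$ whose fibre is the Lie algebra of $H$. Since $\pi_1(\tilde L, L)$ is discrete, this Lie algebra is $\mathrm{Lie}(\mathrm{Inner}(\cT_0)) = \cT_0$, so $\Omega_P$ is a $\cT_0$-type $2$-form and the identity above is the usual relation between the curvature of a principal connection and the curvature of the induced connection on an associated bundle; I would confirm it by a direct computation with the horizontal lift, adapted to the formal associated bundle $T = (P \times \mathbb R^d)/H$ and to diffeological connections.

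Then faithfulness finishes the argument. The $c_H(P)$-action on $T$ is faithful in the formal sense (Subsection~\ref{sec:Reconstruction}), so the fundamental vector field map from $\mathrm{ad}(P)$ with fibre $\cT_0$ into vertical formal vector fields is injective --- fibrewise it is the tautological inclusion $\cT_0 \hookrightarrow \mathfrak X^{\mathrm{formal}}_0(\mathbb R^d)$. Hence the curvature of $\mathbb H$ vanishes if and only if $\Omega_P$ vanishes, i.e.\ a flat $\cF$-connection corresponds exactly to a flat principal $H$-connection on $P$, and conversely. As the correspondence is onto all principal $H$-connections, $\cF$ admits a flat $\cF$-connection if and only if $P = P(\cF)$ admits a flat principal connection. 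The main obstacle is the curvature identity of the second paragraph: making precise and proving the equality of the $\cT$-valued curvature of $\mathbb H$ with the fundamental vector field of $\Omega_P$ in the formal/diffeological setting, where $T$ is only a sheaf and the connections are diffeological rather than finite-dimensional. Everything else --- the reduction via Lemma~\ref{lem:FoliationsArIsomorphictocanonicalFoliation}, the bijection of connections, and the injectivity coming from faithfulness --- is immediate from results already established.
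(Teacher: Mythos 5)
Your proof is correct and follows essentially the same route as the paper, which treats the lemma as an immediate consequence of the (unlabelled) proposition ``Associated connections on $T$ are $\cF$-connections'' --- giving the one-to-one correspondence between formal $\cF$-connections and principal $H$-bundle connections on $P(\mathcal F)$ --- combined with the Yang--Mills machinery of Appendix \ref{app:YMconnections}. The curvature identity you single out as the main obstacle is in fact already established there: Corollary \ref{cor:BracketOfHoriAndVerti}(2) states that the curvature of $\mathbb H$ equals $\overline{\zeta(X,X')}$ with $\zeta$ the curvature of the principal connection $A$, so together with faithfulness (fibrewise injectivity of $\nu \mapsto \overline{\nu}$, the inclusion $\mathcal T_0 \hookrightarrow \mathfrak X^{\mathrm{formal}}_0(\mathbb R^d)$) the equivalence of the two flatness conditions is immediate.
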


If a formal singular foliation $ \mathcal F$ along the leaf $\leaf$ and transverse model  $(\mathbb R^d, \mathcal T_0) $ admits a flat $ \mathcal F$-connection, then parallel transportation induces a group morphism:
 \begin{equation}
\label{eq:Psi0}
 \Psi \colon \pi_1(\leaf,\ell) \longrightarrow \mathrm{Sym}(\mathcal T_\ell)
 \end{equation}
 Here  $\mathcal T_\ell $ is the transverse formal  singular foliation at $\ell \in L $. Upon identifying the fiber  $(T_\ell,\mathcal T_\ell) $
 with $(\mathbb R^d, \mathcal T_0) $, we can see this group morphism as a group morphism, defined up to a conjugation by an element in $\mathrm{Sym}(\mathcal T_0)$:
 \begin{equation}
\label{eq:Psi}
 \Psi \colon \pi_1(\leaf) \longrightarrow \mathrm{Sym}(\mathcal T_0)
 \end{equation}
 that lifts the \pairing{} of the singular foliation $\mathcal F $. (Notice that since this map is defined up to conjugation by an element in $\mathrm{Sym}(\mathcal T_0)$, we do not need to say with respect to which point of $ L$ we compute the fundamental group of $L$).

 The singular foliation $\mathcal F $ can be reconstructed as follows: Consider on $L^u \times \mathbb R^d$ ($L^u $ being again the universal cover of $L$) the trivial product of the singular foliation of all vector fields on $ L^u$
  with the formal singular foliation $\mathcal T_0 $ on $\mathbb R^d $. Then the group morphism \eqref{eq:Psi} can be used to let this formal singular foliation go down to 
 $\frac{ L^u \times \mathbb{R}^d }{\pi_1(L)} $.
 This formal singular foliation coincides with $\mathcal F $. 

\begin{prop}\label{prop:FlatFoliationClassification}
A formal singular foliation $ \mathcal F$ along the leaf $\leaf $ admits a  flat formal $\mathcal F $-connection if and only if there
exists a group morphism $\Psi \colon \pi_1(\leaf) \to {\mathrm{Sym}}(\mathcal T_0) $
such that its associated \data{} is of the form:
\begin{enumerate}
\item $L(\mathcal F)$ is $L^u/K $ with $K=\Psi^{-1}({\mathrm{Inner}}(\mathcal T_0))$ and $\leaf^u $  the universal cover of $\leaf$.
\item $H(\mathcal F)$ is the subgroup of $ {\mathrm{Sym}}(\mathcal T_0) $ of all elements of the form $g \circ \Psi(\bar{\gamma}) $ for $g \in {\mathrm{Inner}}(\mathcal T_0)$ and $\bar{\gamma} \in \pi_1(\leaf)$.
\item The principal bundle $P(\mathcal F)$ is given by $$ P (\mathcal F):= \frac{\leaf^u \times H}{\pi_1(\leaf)} $$
 where the right-action of $\pi_1(\leaf) $ is given by
 $$ (\ell, h) \cdot \bar{\gamma}  = \bigl(\ell\cdot \bar{\gamma}, h \Psi(\bar{\gamma})\bigr)$$ 
 for all $ h \in H(\mathcal F), \bar{\gamma} \in \pi_1(\leaf), \ell \in \leaf^u$.
\end{enumerate}
\end{prop}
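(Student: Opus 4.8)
The plan is to prove both implications by using Lemma~\ref{lem:existenceflat} to pass from flatness of the $\cF$-connection to flatness of the principal bundle $P(\cF)$, and then invoking the classical holonomy correspondence between flat principal bundles and $\pi_1(\leaf)$-representations. Throughout I read off the three items of the \data{} from the construction of Subsection~\ref{sec:Decomposition}.

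\emph{From a flat connection to $\Psi$.} Assume $\cF$ carries a formal flat $\cF$-connection $\mathbb{H}$. Flatness means the curvature of Corollary~\ref{cor:CurvatureOfFConn} vanishes, so the parallel transport $\mathrm{PT}_\gamma$ depends only on the homotopy class of $\gamma$ rel endpoints; after fixing an identification $(T_\ell,\cT_\ell)\simeq(\mathbb{R}^d,\cT_0)$, the assignment $\gamma\mapsto\mathrm{PT}_\gamma$ descends to the genuine group morphism $\Psi\colon\pi_1(\leaf)\to\mathrm{Sym}(\cT_0)$ of~\eqref{eq:Psi}, well defined up to conjugation. The \pairing{} is then the composite of $\Psi$ with the projection $\mathrm{Sym}(\cT_0)\twoheadrightarrow\mathrm{Out}(\cT_0)$; its kernel is $K=\Psi^{-1}(\mathrm{Inner}(\cT_0))$, so the Galois cover is $L(\cF)=L^u/K$, which is item~(1). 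Since $H(\cF)$ is by definition the preimage in $\mathrm{Sym}(\cT_0)$ of the image of the \pairing{}, an element $h$ lies in $H(\cF)$ precisely when its class equals $\overline{\Psi(\bar\gamma)}$ for some $\bar\gamma\in\pi_1(\leaf)$, that is, when $h=g\circ\Psi(\bar\gamma)$ with $g\in\mathrm{Inner}(\cT_0)$; this is item~(2), and in particular $\Psi$ is valued in $H(\cF)$.

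\emph{Identifying $P(\cF)$.} This is the step I expect to be the crux. By~\eqref{eq:YMgroupoidDef}, $P(\cF)$ is the source-fibre of the Yang--Mills groupoid over the basepoint, whose fibre over $\ell'$ is $\{\phi\circ\mathrm{PT}_\gamma\mid\gamma\in\mathrm{Path}_\ell^{\ell'},\ \phi\in\mathrm{Inner}(\cT_{\ell'})\}$. Because $\mathbb{H}$ is flat, each point of $L^u$, viewed as a homotopy class of paths issuing from $\ell$, carries a well-defined frame, and I would study the map
\begin{equation*}
L^u\times H(\cF)\longrightarrow P(\cF),\qquad ([\gamma],h)\longmapsto h\circ\mathrm{PT}_\gamma .
\end{equation*}
Surjectivity is immediate, and a direct computation with the cocycle identity for parallel transport (as in the proof of Corollary~\ref{cor:YMgroupoid}) together with $\Psi(\bar\gamma)=\mathrm{PT}_{\bar\gamma}$ shows that this map is invariant under the diagonal $\pi_1(\leaf)$-action $([\gamma],h)\cdot\bar\gamma=([\gamma]\cdot\bar\gamma,\,h\,\Psi(\bar\gamma))$ and descends to the claimed isomorphism $P(\cF)\cong\frac{L^u\times H}{\pi_1(\leaf)}$ of item~(3); quotienting the right-hand side by $\mathrm{Inner}(\cT_0)$ returns $L^u/K$, consistent with item~(1). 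The one subtlety is that every map here must respect the diffeologies of Subsection~\ref{sec:InnerAndOuterSym}; this holds because parallel transport of a formal flat connection is smooth on plots, so the quotient is a diffeological principal $H$-bundle.

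\emph{Converse.} Conversely, if some $\Psi$ yields a \data{} of the stated shape, then $P(\cF)=\frac{L^u\times H}{\pi_1(\leaf)}$ is visibly flat: the horizontal distribution $TL^u\times\{0\}$ on $L^u\times H$ is invariant under both the $H$-action and the $\pi_1(\leaf)$-action, hence descends to a flat principal connection on $P(\cF)$. Lemma~\ref{lem:existenceflat} then provides a formal flat $\cF$-connection on $\cF$, completing the equivalence.
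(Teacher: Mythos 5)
Your proposal is correct and follows essentially the same route as the paper's own (much terser) proof: both rely on Lemma \ref{lem:existenceflat} to translate flatness of the $\cF$-connection into flatness of a principal connection on $P(\cF)$, extract $\Psi$ from the homotopy-invariant parallel transport as in \eqref{eq:Psi}, and read the three items of the \data{} off the construction of Subsection \ref{sec:Decomposition}; you merely make explicit what the paper declares obvious, including the converse via the descended flat connection on $(\leaf^u\times H)/\pi_1(\leaf)$. One bookkeeping remark: for $h\in\mathrm{Sym}(\cT_\ell)$ and $\gamma$ a path from $\ell$ to $\ell'$, the composite $h\circ\mathrm{PT}_\gamma$ is not defined; the identification compatible with the $\pi_1(\leaf)$-action exactly as stated in the proposition is $([\gamma],h)\mapsto h\circ\mathrm{PT}_\gamma^{-1}$, for which the cocycle identity $\mathrm{PT}_{\bar\gamma\star\gamma}^{-1}=\Psi(\bar\gamma)^{-1}\circ\mathrm{PT}_\gamma^{-1}$ yields precisely the invariance you assert.
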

Notice that, as a group, $H(\mathcal F)$ in item (2) above is the quotient of the semi-direct product $\pi_1(\leaf) \ltimes {\mathrm{Inner}}(\mathcal T_0) $ by the normal sub-group $(k,\Psi(k)) $ with $k \in K$.
\begin{proof}
Assume that a flat formal $ \mathcal F$-connection exists. Then  the induced connection on $P(\mathcal F)$ is also flat (see Lemma \ref{lem:existenceflat}), so that there is a map $ \Psi \colon \pi_1(\leaf) \to H(\mathcal F)$ and $P(\mathcal F)$ is given as in the third item. It is obvious that $ H(\mathcal F) $ and  $ L(\mathcal F)$ are given as in two first items.
\end{proof}

\begin{example}\label{ex:S1asleaf}
\normalfont
If $L=S^1 $, then any $ \mathcal F$-connection is flat, and has to be of the form described in this section. Since $\pi_1(L) =\mathbb Z $, $ \Psi$ is determined by the image of $1 \in \mathbb Z$ in ${\mathrm{Out}}(\mathcal T_0) $. The conjugacy class of this element entirely determines $\mathcal F $. Formal singular foliations along the leaf $S^1 $ with transverse model $(\mathbb R^d, \mathcal T_0) $ are therefore in one-to-one correspondence with conjugacy classes of the group ${\mathrm{Out}}(\mathcal T_0) $.
\end{example}

\begin{rem}
It is \emph{not} true that if the \pairing{} is trivial, then it implies the existence of a flat formal  $\cF$-connection. The remaining principal bundle in the \data{} can still be non-flat in such a situation. 

Non-flat orientable vector bundles (see Example \ref{ex:vectorBundles}) provide examples with trivial \pairing{} but no flat formal $\cF$-connection; also Example \ref{ex:HopfFibration} provides such an example, because, assuming the contrary, a flat formal $\cF$-connection would induce 
a flat connection on the Hopf fibration, giving rise to a trivialisation of the Hopf fibration due to the fact that $\mathbb{S}^4$ is simply connected.
\end{rem}

Last, notice that if the outer holonomy is trivial,  then the group morphism in \eqref{eq:Psi} has values in $\mathrm{Inner}(\cT_0)$, leading to the following corollary.

\begin{cor}
A formal singular foliation $ \mathcal F$ along the leaf $\leaf $ and trivial \pairing{} admits a  flat formal $\mathcal F $-connection if and only if there
exists a group morphism $\Psi \colon \pi_1(\leaf) \to {\mathrm{Inner}}(\mathcal T_0) $
such that its associated \data{} is given by $L(\cF)=L$, $H= \mathrm{Inner}(\cT_0) $ and the flat principal $\mathrm{Inner}(\cT_0)$-bundle $P(\cF)$ given by $(\leaf^u \times \mathrm{Inner}(\cT_0))/\pi_1(\leaf)$.
\end{cor}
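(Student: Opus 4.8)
The statement is a direct specialisation of Proposition~\ref{prop:FlatFoliationClassification} to the case of a trivial \pairing{}, and the plan is to track how each of its three items collapses. The key observation, already recorded in the sentence preceding the corollary, is that the \pairing{} of $\cF$ is by construction the composition of $\Psi$ with the canonical surjection $\mathrm{Sym}(\cT_0) \twoheadrightarrow \mathrm{Out}(\cT_0)$ whose kernel is exactly $\mathrm{Inner}(\cT_0)$ (see the exact sequences in Section~\ref{sec:InnerAndOuterSym}). Consequently, \emph{the \pairing{} is trivial if and only if $\Psi$ takes values in $\mathrm{Inner}(\cT_0)$}, i.e.\ $\Psi$ factors through the inclusion $\mathrm{Inner}(\cT_0) \hookrightarrow \mathrm{Sym}(\cT_0)$.

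Assuming first that $\cF$ admits a formal flat $\cF$-connection and has trivial \pairing{}, I would invoke Proposition~\ref{prop:FlatFoliationClassification} to obtain the morphism $\Psi \colon \pi_1(\leaf) \to \mathrm{Sym}(\cT_0)$, and then use the observation above to upgrade its target to $\mathrm{Inner}(\cT_0)$. Substituting $\Psi(\pi_1(\leaf)) \subset \mathrm{Inner}(\cT_0)$ into the three items of that proposition then yields: in item~(1), $K = \Psi^{-1}(\mathrm{Inner}(\cT_0)) = \pi_1(\leaf)$, hence $L(\cF) = \leaf^u/\pi_1(\leaf) = \leaf$; in item~(2), every element $g \circ \Psi(\bar\gamma)$ already lies in $\mathrm{Inner}(\cT_0)$ and conversely each $g \in \mathrm{Inner}(\cT_0)$ arises (take $\bar\gamma = 1$), so $H(\cF) = \mathrm{Inner}(\cT_0)$; and item~(3) becomes $P(\cF) = (\leaf^u \times \mathrm{Inner}(\cT_0))/\pi_1(\leaf)$ with the stated action, which is flat because it is built from the flat connection.

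For the converse, I would start from a group morphism $\Psi \colon \pi_1(\leaf) \to \mathrm{Inner}(\cT_0)$ whose associated \data{} is the one displayed in the statement, regard $\Psi$ as valued in $\mathrm{Sym}(\cT_0)$ via the inclusion, and apply the ``if'' direction of Proposition~\ref{prop:FlatFoliationClassification} to conclude that $\cF$ admits a formal flat $\cF$-connection. Its \pairing{} is then the composite of $\Psi$ with $\mathrm{Sym}(\cT_0) \twoheadrightarrow \mathrm{Out}(\cT_0)$, which is trivial precisely because $\Psi$ lands in the kernel $\mathrm{Inner}(\cT_0)$.

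There is essentially no obstacle here: the whole content is inherited from Proposition~\ref{prop:FlatFoliationClassification}, and once that proposition is granted the argument is purely formal. The only point requiring a word of care is the equivalence ``trivial \pairing{} $\Leftrightarrow$ $\Psi$ lands in $\mathrm{Inner}(\cT_0)$'', which relies on the exactness of $\mathrm{Inner}(\cT_0) \hookrightarrow \mathrm{Sym}(\cT_0) \twoheadrightarrow \mathrm{Out}(\cT_0)$ together with the fact that $\Psi$ is a genuine lift of the \pairing{}; with this in hand the collapse of the three items of the \data{} is immediate.
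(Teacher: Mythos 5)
Your proof is correct and follows exactly the route the paper intends: the corollary is stated as an immediate specialisation of Proposition \ref{prop:FlatFoliationClassification}, justified by the preceding observation that a trivial \pairing{} forces the lift $\Psi$ of \eqref{eq:Psi} to take values in $\mathrm{Inner}(\mathcal T_0)$, after which the three items of the \data{} collapse precisely as you compute. Your write-up merely makes explicit (both directions and the collapse of each item) what the paper leaves implicit, so there is nothing to correct.
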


\subsection{The torus case}\label{subsec:Torus}

Let $(\mathbb R^d, \mathcal T_0) $ be a formal singular foliation made of formal vector fields vanishing at $0$ on $\mathbb R^d $. 

\begin{deff}
\label{def:torusdata}
We call  \emph{torus triple} the following data
\begin{enumerate}
\item two symmetries $\phi,\psi \in \mathrm{Sym}(\mathcal T_0) $ such that $ \psi \circ \phi \circ \psi^{-1} \circ \phi^{-1} \in {\mathrm{Inner}}(\mathcal T_0)$,
 \item a homotopy class\footnote{Since the fibers of $ \mathrm{Inner}(\mathcal T_0) \to \mathrm{Inner}(\mathcal T_0)/\mathrm{exp}(({\mathcal T_0)_{\geq 2}}) $ are contractible, we could equivalently require to be given a homotopy class of a path from the unit to the image of $ \kappa$ in the quotient group  $\mathrm{Inner}(\mathcal T_0)/\mathrm{exp}(({\mathcal T_0)_{\geq 2}})$, i.e.\ an element of the universal cover of $\mathrm{Inner}(\mathcal T_0)/{(\mathrm{exp}}({\mathcal T}_0)_{\geq 2})$.  } $[\kappa] $ of a smooth curve $ \kappa(t)$ in $ \mathrm{Inner}(\mathcal T_0)$ such that $\kappa(0)= {\mathrm{id}} $ and 
  \begin{equation}
\label{eq:comutator} \kappa(1) = \psi \circ \phi \circ \psi^{-1} \circ \phi^{-1}. \end{equation} 
 \end{enumerate}
We say that two torus triples $ (\phi,\psi,[\kappa])$ and $ (\tilde{\phi},\tilde{\psi},[\tilde{\kappa}])$ are \emph{equivalent} if there exists paths $g(t), h(t) : [0,1] \to \mathrm{Inner}(\mathcal T_0)$ such that $ g(0)=h(0)=1$ and 
$$  \tilde{\phi} = g \circ \phi  , \tilde{\psi} = h \circ \psi , [\tilde{\kappa}] = [c_{\psi^{-1}}(g^{-1}(t)) \circ  h^{-1}(t) \circ \kappa(t) \circ g(t) \circ c_\phi (h(t)) ]     .$$ (Here $ c$ stands for the conjugation.)
\end{deff}

The groups in  the definition above are diffeological groups \cite{zbMATH01867165}, so the notion of smooth curves makes sense.
We intend to prove the following proposition.

\begin{prop}
\label{prop:torus_triple}
Given a formal singular foliation $\mathcal T_0 $ on $\mathbb R^d $, there is a one-to-one correspondence between:
\begin{enumerate}
\item[(i)] Formal singular foliation with transverse model $(\mathbb{R}^d, \mathcal T_0)$ along the leaf $ \mathbb T^2$.
\item[(ii)] Equivalence classes of torus triples. 
\end{enumerate}
\end{prop}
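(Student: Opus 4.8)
The plan is to specialise Theorem~\ref{thm:classification} to $\leaf=\mathbb{T}^2$ and then to read off the three pieces of data cell by cell over the standard CW decomposition of the torus. By Theorem~\ref{thm:classification} (and Remark~\ref{rem:Pgivestriple}), a formal singular foliation along $\mathbb{T}^2$ with transverse model $\mathcal T_0$ is the same as an equivalence class of a pair $(\Xi,P)$, where $\Xi\colon\pi_1(\mathbb{T}^2)=\mathbb{Z}^2\to\mathrm{Out}(\mathcal T_0)$ and $P$ is an $H$-principal bundle over $\mathbb{T}^2$ with $H$ the preimage of $\mathrm{Im}(\Xi)$ in $\mathrm{Sym}(\mathcal T_0)$. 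First I would note that $\Xi$ is precisely a pair of commuting elements $\bar\phi=\Xi(a)$, $\bar\psi=\Xi(b)\in\mathrm{Out}(\mathcal T_0)$, where $a,b$ are the standard generators of $\pi_1(\mathbb{T}^2)$; choosing lifts $\phi,\psi\in\mathrm{Sym}(\mathcal T_0)$, the vanishing of the commutator of $\bar\phi,\bar\psi$ in $\mathrm{Out}(\mathcal T_0)$ forces $\psi\circ\phi\circ\psi^{-1}\circ\phi^{-1}\in\mathrm{Inner}(\mathcal T_0)$. This is exactly item~(1) of a torus triple, and the commutator is the endpoint that $\kappa$ must reach.

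Next I would extract $[\kappa]$ from $P$. Over the $1$-skeleton $S^1_a\vee S^1_b$ a bundle is determined by its monodromies, which may be taken to be the lifts $\phi$ and $\psi$; equivalently, a flat $\mathcal F$-connection has parallel transports $\mathrm{PT}_a=\phi$, $\mathrm{PT}_b=\psi$ as in Corollary~\ref{cor:PAPBPC}. The torus is obtained by attaching a single $2$-cell along the word $aba^{-1}b^{-1}$, and extending $P$ across this cell is an obstruction-theoretic problem: the monodromy around the boundary is the commutator $\psi\circ\phi\circ\psi^{-1}\circ\phi^{-1}$, which lies in the identity component $\mathrm{Inner}(\mathcal T_0)$ exactly because $\bar\phi,\bar\psi$ commute, and the possible extensions form a torsor over $\pi_1\bigl(\mathrm{Inner}(\mathcal T_0)\bigr)$. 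Such an extension is the same datum as a homotopy class of path from $\mathrm{id}$ to the commutator inside $\mathrm{Inner}(\mathcal T_0)$, which is item~(2). To make obstruction theory legitimate I would first replace the diffeological bundle by a finite-dimensional one, using Proposition~\ref{prop:simplifyBundle} and Corollary~\ref{coro:simplifyBundle} to descend to the finite-dimensional Lie group $\mathrm{Inner}(\mathcal T_0)/\mathrm{Inner}(\mathcal T_0)_{\geq 2}$; since $\mathrm{Inner}(\mathcal T_0)_{\geq 2}$ is contractible by Lemma~\ref{lem:exp2}, this changes neither $\pi_0$ nor $\pi_1$, and it matches the identification of $[\kappa]$ with a point of the universal cover of that Lie group noted in the footnote to Definition~\ref{def:torusdata}.

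The remaining step is to match the two notions of equivalence. On the foliation side the ambiguities are: the identification $(\normal_\ell,\mathcal T_\ell)\simeq(\mathbb{R}^d,\mathcal T_0)$, which acts by conjugation by an element of $\mathrm{Sym}(\mathcal T_0)$; the choice of $\mathcal F$-connection, which by property $\mathcal P_C$ of Corollary~\ref{cor:PAPBPC} alters each parallel transport by an inner symmetry; and the choice of lifts $\phi,\psi$ and of null-homotopy $\kappa$. A change of lift $\phi\mapsto g(1)\,\phi$ with $g(1)\in\mathrm{Inner}(\mathcal T_0)$ modifies the commutator and hence $\kappa$; because $[\kappa]$ is only a homotopy class, its new value depends not on the endpoint $g(1)$ alone but on a whole path $g(t)$ joining it to the identity, which is exactly why Definition~\ref{def:torusdata} records paths $g(t),h(t)$ rather than group elements. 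Carrying the boundary monodromy and its chosen null-homotopy through such a gauge change should reproduce the stated law $[\tilde\kappa]=\bigl[c_{\psi^{-1}}(g^{-1}(t))\circ h^{-1}(t)\circ\kappa(t)\circ g(t)\circ c_\phi(h(t))\bigr]$, while the conjugation by $\mathrm{Sym}(\mathcal T_0)$ accounts for the transverse-model identification.

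The main obstacle is precisely this last matching at the level of homotopy classes. It is routine that the underlying commuting pair and the component data correspond, but one must check that the conjugation factors $c_{\psi^{-1}}$, $c_\phi$ and the full path-dependence in the transformation of $[\kappa]$ emerge exactly as in Definition~\ref{def:torusdata}, with no stray loop in $\pi_1\bigl(\mathrm{Inner}(\mathcal T_0)\bigr)$ left unaccounted for. I would carry this out by comparing the explicit descent cocycle of $P$ over the universal cover $\mathbb{R}^2\to\mathbb{T}^2$ --- a pair of smooth maps $\alpha,\beta\colon\mathbb{R}^2\to H$ subject to $\alpha(x+e_2)\,\beta(x)=\beta(x+e_1)\,\alpha(x)$ --- under gauge transformations valued in $\mathrm{Inner}(\mathcal T_0)$, tracking homotopy classes throughout; the contractibility of $\mathrm{Inner}(\mathcal T_0)_{\geq 2}$ guarantees that this finite-dimensional computation captures all of the information.
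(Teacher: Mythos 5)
Your proposal is correct, and it rests on the same backbone as the paper's proof: specialise Theorem~\ref{thm:classification} (via Remark~\ref{rem:Pgivestriple}), read off $\phi,\psi$ from the two fundamental loops, and identify $[\kappa]$ with a null-homotopy datum for the commutator loop. The implementation differs in two places, though. First, the paper chooses a principal connection on the diffeological bundle $P$ and takes $\phi,\psi$ to be the parallel transports along $C_1,C_2$; that the transport around the contractible commutator loop lands in $\mathrm{Inner}(\mathcal T_0)$ is deduced from the Yang--Mills property (Theorem~\ref{thm:isYM}), and $\kappa(t)$ is produced by transporting along a homotopy contracting that loop. You instead get inner-ness purely algebraically (commutativity of $\pi_1(\mathbb T^2)$ in $\mathrm{Out}(\mathcal T_0)$) and produce $[\kappa]$ as the obstruction-theoretic extension datum over the $2$-cell, a torsor over $\pi_1\bigl(\mathrm{Inner}(\mathcal T_0)\bigr)$, made legitimate by the finite-dimensional reduction of Proposition~\ref{prop:simplifyBundle} and the contractibility in Lemma~\ref{lem:exp2}. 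Second, for the reverse direction the paper writes the bundle explicitly as $[0,1]^2\times H$ modulo identifications by $\psi$ and $\kappa(t)\circ\phi$ --- which is exactly your "extension across the $2$-cell" read concretely, so the two bijections coincide. What your packaging buys is rigour on the infinite-dimensional side (no connection needs to be chosen on a diffeological bundle, and all homotopy bookkeeping happens in the finite-dimensional group $H/\mathrm{Inner}(\mathcal T_0)_{\geq 2}$); what the paper's packaging buys is that the equivalence checks become transparent, since a change of $\mathcal F$-connection alters each parallel transport by an inner symmetry (property $\mathcal P_C$ of Corollary~\ref{cor:PAPBPC}), which is how the conjugation factors and the paths $g(t),h(t)$ in Definition~\ref{def:torusdata} arise. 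Note that the paper, like you, leaves the detailed verification that equivalent triples give equivalent \datas{} to the reader, so your descent-cocycle plan over $\mathbb R^2\to\mathbb T^2$ is not a weakness relative to the paper --- it is a concrete way to finish a step the paper only asserts.
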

\begin{proof}
Of course, to prove this proposition, we will use Theorem \ref{thm:classification} and show that equivalence classes of torus triples are in one-to-one correspondence with equivalence classes of \datas{}. Without any loss of generality, one can assume that $\mathbb T^2 = \mathbb R^2/ \mathbb Z^2 $ with projection $ (x,y) \mapsto \overline{(x,y)}$.

Let $\mathcal F $ be a formal singular foliation along the leaf $\leaf = \mathbb T^2$.
Let $ (\tilde{L},H,P)$ be its \data{}. 
Choose a principal bundle connection on $P \to \mathbb T^2 $. Parallel transportation along the two fundamental loops\footnote{We acknowledge the use of a code from "user121799" from the website stackexchange for the \LaTeX{} drawing; see \url{https://tex.stackexchange.com/questions/348/how-to-draw-a-torus}.}:
$ C_1 \colon x \mapsto \overline{(x,0)} $ and $ C_2 \colon y \mapsto \overline{(0,y)}  $

\begin{center}
\begin{tikzpicture}[tdplot_main_coords]
\pgfmathsetmacro{\R}{4}
\pgfmathsetmacro{\r}{1}
 \draw[thick,fill=gray,even odd rule,fill opacity=0.3] plot[variable=\x,domain=0:360,smooth,samples=71]
 ({torusx(\x,vcrit1(\x,\tdplotmaintheta),\R,\r)},
 {torusy(\x,vcrit1(\x,\tdplotmaintheta),\R,\r)},
 {torusz(\x,vcrit1(\x,\tdplotmaintheta),\R,\r)}) 
 plot[variable=\x,
 domain={-180+umax(\tdplotmaintheta,\R,\r)}:{-umax(\tdplotmaintheta,\R,\r)},smooth,samples=51]
 ({torusx(\x,vcrit2(\x,\tdplotmaintheta),\R,\r)},
 {torusy(\x,vcrit2(\x,\tdplotmaintheta),\R,\r)},
 {torusz(\x,vcrit2(\x,\tdplotmaintheta),\R,\r)})
 plot[variable=\x,
 domain={umax(\tdplotmaintheta,\R,\r)}:{180-umax(\tdplotmaintheta,\R,\r)},smooth,samples=51]
 ({torusx(\x,vcrit2(\x,\tdplotmaintheta),\R,\r)},
 {torusy(\x,vcrit2(\x,\tdplotmaintheta),\R,\r)},
 {torusz(\x,vcrit2(\x,\tdplotmaintheta),\R,\r)});
 \draw[thick] plot[variable=\x,
 domain={-180+umax(\tdplotmaintheta,\R,\r)/2}:{-umax(\tdplotmaintheta,\R,\r)/2},smooth,samples=51]
 ({torusx(\x,vcrit2(\x,\tdplotmaintheta),\R,\r)},
 {torusy(\x,vcrit2(\x,\tdplotmaintheta),\R,\r)},
 {torusz(\x,vcrit2(\x,\tdplotmaintheta),\R,\r)});
 \foreach \X  in {240}  
 {\draw[thick,dashed] 
  plot[smooth,variable=\x,domain={360+vcrit1(\X,\tdplotmaintheta)}:{vcrit2(\X,\tdplotmaintheta)},samples=71]   
 ({torusx(\X,\x,\R,\r)},{torusy(\X,\x,\R,\r)},{torusz(\X,\x,\R,\r)});
 \draw[thick] 
  plot[smooth,variable=\x,domain={vcrit2(\X,\tdplotmaintheta)}:{vcrit1(\X,\tdplotmaintheta)},samples=71]   
 ({torusx(\X,\x,\R,\r)},{torusy(\X,\x,\R,\r)},{torusz(\X,\x,\R,\r)})
 node[below]{$C_1$};
 }
 \draw[thick] plot[smooth,variable=\x,domain=-140:-85,samples=71]   
 ({torusx(-15+200*cos(\x),0,\R,\r)},
 {torusy(-15+200*cos(\x),0,\R,\r)},
 {torusz(-15+200*cos(\x),0,\R,\r)})
 node[above left]{};
  \draw[thick,dashed] plot[smooth,variable=\x,domain=0:180,samples=71]   
 ({torusx(-15+200*cos(\x),0,\R,\r)},
 {torusy(-15+200*cos(\x),0,\R,\r)},
 {torusz(-15+200*cos(\x),0,\R,\r)})
 node[right=3mm]{$C_2$};
\end{tikzpicture}
\end{center}
yields two elements of $H$ that we denote by $\psi $ and $\phi $, respectively. Now, the commutator $ \psi \circ \phi \circ \psi^{-1} \circ \phi^{-1}$
vcorresponds to parallel transportation through the contractible path $ C_1 \circ C_2 \circ C_1^{-1 } \circ C_2^{-1} $. By Theorem \ref{thm:isYM}, the outcome of this parallel transportation is an element of $\kappa \in {\mathrm{Inner}}(\mathcal T_0) $. Any homotopy between this path and the constant path $ \overline{(0,0)}$
gives, through parallel transportation, a smooth map $\kappa(t) $ valued in $\mathrm{Inner} (\mathcal T_0)$ that interpolates between ${\mathrm{id}} $ and $\kappa $. Hence this gives a torus triple. A  different choice of formal $ \mathcal F$-connection gives an equivalent torus triples. Also, a different homotopy between $C_1 \circ C_2 \circ C_1^{-1 } \circ C_2^{-1}$ and the constant path, being itself homotopic to the first one because $ \pi_2$ is zero for a torus,  gives  equivalent torus triples.

Conversely, assume a torus triple $(\phi,\psi, [\kappa(t)]) $ is given. Let us construct a \data{}. 
First we construct $P$.
Let $H$ be the subgroup containing $ \mathrm{Inner}(\mathcal T_0) $ generated by $ \phi$ and $ \psi$. 
The idea consists in considering $[0,1]^2 \times H$ modulo the relations:
 $$  ((s,0) , h ) \sim  ((s,1) , \psi \circ  h )  \hbox{ and }  
 ((0,t) , h ) \sim ((1,t) ,  \kappa(t) \circ \phi  \circ  h ) $$
  for all $s,t\in  [0,1]^2$ and $h \in H$.
 In particular, one identifies 
 $ ((0,0) \times H) $  and $( (1,1) \times H)$ in two different manners:
 \begin{align*}
       ((0,0) , h )& \sim  ((0,1) ,  \psi \circ h ) \\ 
       & \sim ((1,1) , \kappa(1) \circ \phi \circ  \psi \circ  h ) 
 \end{align*}
 and
  \begin{align*}
       ((0,0) , h )& \sim  ((1,0) ,  \phi \circ h ) \\ 
       & \sim ((1,1) ,  \psi \circ  \phi \circ h ) .
 \end{align*}
 These two identifications have to coincide by Equation \eqref{eq:comutator}, so that the quotient is a well-defined principal $H$-bundle over $\mathbb T^2 $. Since $H$ is generated by $\phi$,$\psi$, and ${\mathrm{Inner}}(\mathcal T_0)$, $P$ is connected. By Remark \ref{rem:Pgivestriple}, it defines a \data{}.
 Equivalent torus triples can be checked to define equivalent \datas{}. 
This proves the claim.
\end{proof}

\begin{rem}
Take $\mathcal T_0 $ be made of vector fields that vanish  or order at least $2$. Consider a torus triple $(\phi,\psi, [\gamma] )$. It may not be true that the symmetries $\phi $ and $\psi $ can be multiplied by inner symmetries and then commute.
In particular, it may not be possible to see $\mathcal F $ as a quotient of $(\mathbb R^2, \mathfrak X(\mathbb R^2)) \times (\mathbb R^d, \mathcal T_0) $ through a diagonal action of $ \mathbb Z^2$. In general, the reconstruction procedure may have to go through infinite dimensional geometry, as noticed already in Remark \ref{rem:througInfinite}.  
\end{rem}

\begin{example}
  \normalfont 
  \label{ex:torustriple}
  Let $\mathcal T_0 $ be the formal singular foliation of concentric circles, \textit{i.e.}\ the formal singular foliation on $\mathbb R^2 $ generated by $$  x \frac{\partial}{\partial y}- y \frac{\partial}{\partial x}.$$
  For any $n \in \mathbb Z $, there is a non-contractible loop $t\mapsto \kappa_n(t) $ in ${\mathrm{Inner}}(\mathcal T_0) $ mapping $t \in [0,1]$ to the rotation of center $0$ and angle $2 n \pi t $.
In particular, for any $n \in \mathbb Z $,  there is a torus triple given by $\phi=\psi={\mathrm{id}} $ and by the loop $\kappa_n(t) $. They are not isomorphic, although, for a well-chosen $ \mathcal F$-connection, the parallel transport along the two fundamental circles of the torus is the identity map of the transversal singular foliation $\mathcal T_0 $.  
This answers questions \textbf{Q3a}, \textbf{Q3c} (concentric circles case) asked in the introduction.
\end{example}

\begin{example}\label{ex:IntroQuestionsAboutTorusAsLeaf}
\normalfont
By Remark \ref{rem:concentric} and Corollary \ref{cor:transvQuadra}, for $\mathcal T_0 $ the spiral singular foliation of the Introduction, there is a one-to-one correspondence between formal singular foliations admitting the torus as a leaf and spirals as a transverse model, and group morphisms from $\mathbb Z^2 $ to outer automorphisms of this transverse model (which contains at least rotations, so contains $S^1$).
This also answers questions \textbf{Q3b} and \textbf{Q3c} (spiral case) in the introduction.
\end{example}

\section*{Acknowledgments}
 This paper is part of S.-R.\ F.'s post-doc fellowship at the National Center for Theoretical Sciences (NCTS, \begin{CJK*}{UTF8}{bkai}國家理論科學研究中心\end{CJK*}), which is why S.-R.\ F.\ also wants to thank the NCTS. S.-R.\ F.\ wants to thank Siye Wu, Mark John David Hamilton and Alessandra Frabetti for their great help and support along my professional way. Also big thanks to my mother, father, Dennis, Gregor, Marco, Nico, Jakob, Kathi, Konstantin, Lukas, Locki, Gareth, Philipp, Ramona, Oguz, Annerose, Michael, Maxim, Anna, and my girlfriend Rachelle for all their love. C.L.-G.\ would like to thank Tsing Hua University \begin{CJK*}{UTF8}{bkai}國立清華大學\end{CJK*} (Taiwan) and National Center for Theoretical Sciences (NCTS, \begin{CJK*}{UTF8}{bkai}國家理論科學研究中心\end{CJK*}) for hosting him in the Fall of 2022 and 2023. We thank River Chiang, Salah Mehdi and Noriaki Ikeda for opportunies to present this work in Jeju, Nancy and Kyoto. 

We acknowledge important discussions with Ruben Louis and Leonid Ryvkin at early stages of the project, as well as discussions with David Miyamoto, Leonid Ryvkin, Hyungrok Kim, and Marco Zambon for suggesting several improvements when the first version was made public.

\appendix
\setcounter{equation}{0}
\renewcommand{\theequation}{\Alph{section}.\arabic{equation}} 

{\color{blue}

\section{Classifying multiplicative Yang-Mills connections in the centerless case}\label{app:ClassOfYM}

In this appendix, we explain how the main construction of this article extends to the broader setting of centerless Lie group bundles endowed with a multiplicative Yang–Mills connection. More precisely, we show that these objects can be classified up to field redefinitions -an equivalence relation as defined in \cite{MyThesis}- in the same fashion as in the classification of singular foliations. Multiplicative Yang-Mills connections were first introduced in \cite{SRFCYM}, as an integrated version of a structure appearing to describe Ehresmann connections and Lagrangians in curved Yang-Mills gauge theories; for more literature on curved gauge theories see \cite{OriginofCYMH, mayer2009lie, CurvedYMH, My1stpaper, MyThesis}, and for Ehresmann  connections in this context see also \cite{castrillon2023principal}. Those connections are a special kind of multiplicative connections on Lie groupoids as in \cite{crainic2003differentiable}, but see also \cite{blazquez2022group} for multiplicative connections on Lie group bundles alone. Similar constructions on the group structure also appear in the studies of higher gauge theories and Yang-Mills-Higgs theories, see \cite{samann2020towards, Kim:2019owc, rist2022explicit, perez2025higher, Chatterjee:2502.02284, fischer2024adjusted}. Infinitesimally those connections appear as ``Lie derivation laws covering a coupling'' as in \cite{MR2157566}.

Let $G$ be a Lie group and $ G_0$ its connected component of the identity.
We recall that a Yang-Mills connection for a Lie group bundle $ \mathcal G\to L$ whose typical fiber is a  Lie group $G$, is a pair $ (\mu,\zeta)$ with $ \mu$ a Lie group bundle connection (= an Ehresmann connection such that parallel transportation is by Lie group automorphisms) and $ \zeta$ a $2$-form valued in the Lie algebra bundle such that the curvature of $\mu $ (which is a $2$-form valued in the Lie algebra of group automorphisms of the fiber) is the inner automorphism associated to $ \zeta$. In equation
 $$ {\mathrm{curv}}(\mu) = {\mathrm{ad}}_\zeta$$
In the centerless case, $ \zeta$ is unique, and it simply means that the curvature is by inner automorphisms, i.e.\  for any contractible loop the parallel transportation in $ \mathcal G$ along a loop $ \gamma$ based at an arbitrary point $ \ell \in L$ is an inner automorphism of $ \mathcal G_\ell$ associated to an element in the connected component of the identity.
In this case, field redefinition just consists in identifying two such connections whose parallel transportation along a path differ by an inner automorphism by an element in the connected component of the identity, see \cite{SRFCYM}.

\begin{cor}[Classification of Yang-Mills bundles with trivial center]
\label{cor:classYMcenterless}
Let $G$ be a Lie group with trivial center and $L$ a connected manifold. We have a natural 1:1 correspondence between:

\begin{itemize}
	\item Pairs $(\cG, [\mathbb{H}^\cG])$ made of Lie group bundles $\cG \to L$ with structural Lie group $G = \mathcal G_\ell$ and a class, up to field redefinition, of multiplicative Yang-Mills connections $\mathbb{H}^\cG$.
	\item Equivalence classes of pairs made of:
    \begin{enumerate}
    \item a group morphism\footnote{Here $G_0$ is seen as a sub-group of $ \mathrm{Aut}(G)$ through conjugation. Notice that if $G$ is connected, $ \mathrm{Aut}(G)/G_0= \mathrm{Out}(G)$ .} $\Xi \colon \pi_1(L,\ell) \longrightarrow \mathrm{Aut}(G)/G_0$, that we call the outer holonomy
    \item a principal $H$-bundle $P \to L $ with $H \subset {\mathrm{Aut}}(G)$ the inverse image through ${\mathrm{Aut}}(G) \to {\mathrm{Aut}}(G)/G_0$ of the image of $\Xi$, naturally inducing that the quotient $P/G_0$ is the Galois bundle $ \tilde L \to L$ associated to the kernel of $ \Xi$. 
    \end{enumerate}
    We identify to such pairs $(\Xi_1,P_1) $ and $ (\Xi_2,P_2)$, with $P_1$ a principal $ H_1$-bundle and $ P_2$ a principal $ H_2 $-bundle, if there exists $ \Psi \in \Aut(G)$ such that $ \Xi_1= \overline{\Psi}\circ \Xi_2 \circ \overline{\Psi}^{-1}$, where $\overline \Psi$ is the class of $\Psi$ in $\mathrm{Aut}(G)/G_0$, and if there exists an fiber preserving diffeomorphism $ \chi \colon P_1 \simeq P_2 $ such that $ \chi (p \cdot g) = \chi(p) \cdot \Psi(g)$ for all $g \in H_1$.
    Notice that $ \Psi(g)$ automatically belongs to $ H_2$, so that this condition makes sense.
    \end{itemize} 
\end{cor}

\begin{proof}
This correspondence goes via associated bundle constructions.
We start from a pair  $(\cG, [\mathbb{H}^\cG])$.
Choose a representative $\mathbb{H}^\cG$.
 Parallel transportation  $\mathrm{PT}^\cG_\gamma$ along a path $ \gamma$ in $L$ satisfies the following:
\begin{itemize}
    \item $\mathrm{PT}^\cG_\gamma$ has values in $\mathrm{Aut}(\cG_l)\cong \mathrm{Aut}(G)$ for every loop based at $l$
    \item For any two homotopic paths $ \gamma_0, \gamma_1$ from $\ell$ to $\ell'$ the parallel transports $ \mathrm{PT}^\cG_{\gamma_0}$ and $\mathrm{PT}^\cG_{\gamma_1}$ differ by an inner symmetry of $G_0$, \textit{i.e.}\
    \begin{equation*}
    \mleft(\mathrm{PT}^\cG\mright)_{\gamma_1}^{-1}  \circ \mathrm{PT}^\cG_{\gamma_0} \in {\mathrm{Inner}} (\cG_{\ell,0}) \cong {\mathrm{Inner}} (G_0) ~.
    \end{equation*}
    \item Two parallel transports of two different multiplicative Yang-Mills connections are related by field redefinitions if and only if they differ by an element of ${\mathrm{Inner}} (G_0)$. 
\end{itemize}

These are properties similar to those of Corollary \ref{cor:PAPBPC} for singular foliations, but now w.r.t.\ Lie group theoretical notions. In particular, we can repeat the constructions of Subsection \ref{sec:YMgroupoid} and Section \ref{sec:ClassI}.
 Fix a point $\ell \in L$. Define the outer holonomy as a group morphism $\Xi \colon \pi_1(L, \ell) \to \mathrm{Aut}(G)/G_0$ as follows: $\Xi$ is the parallel transport $\mathrm{PT}_\gamma^\cG$ over loops $\gamma$ based at $\ell$ followed by the quotient map $\mathrm{Aut}(G) \to \mathrm{Aut}(G)/G_0$ after identifying $\cG_\ell \cong G$. The properties above imply it is a well-defined group morphism. Consider the transitive groupoid $\Gamma$  over $L$ whose elements are Lie group automorphisms $\cG_{\ell_0} \to \cG_{\ell_1}$, identifying the fibers over two points $\ell_0, \ell_1 \in L$.  Of course, the source of such an element is $\ell_0$ and the target is $\ell_1$. 
Now consider the transitive Lie subgroupoid $\mathrm{YM}(\cG, [\mathbb{H}^\cG]) \subset \Gamma$
of all by all compositions of a parallel transports $\mathrm{PT}^\cG$ along any path with composition by an inner automorphisms of the source or the target. 
The isotropy $H$ of this groupoid at a given point $ \ell$ is by construction obtained precisely as in the statement. Also, the source-fiber of this groupoid over $ \ell$ is the principal $H$-bundle $P$. Different choices for the identification $ \mathcal G_\ell \simeq G$ or for the base point $ \ell$ give equivalent pairs. This constructs a map from the first item to the second one. 

Let us construct the inverse map. Start with a pair $ (\Xi,P)$. 
Consider  the associated Lie group bundle $\mathcal G \coloneqq \mleft( P \times G \mright) \Big/ H$, where $H \subset \mathrm{Aut}(G)$ acts canonically on $G$. 
We have to equip it with a Yang-Mills connection. To start with, let us equip $ P$ with an Ehresmann connection $A$ as a principal $H$-bundle. This connection $A$ induces a connection $\mathbb H^\cG$ on the associated group bundle, such that
parallel paths are paths of the form $m(t) := [p(t),g]$ 
with $ p(t)$ a parallel path in $ P$, where $[\cdot, \cdot]$ denotes the associated equivalence class.
Parallel transportation is then by group morphisms, since the products of two parallel paths $[p_1(t),g] $ and $[p_2(t),g'] $ over the same path in $L$ is a parallel path $ [p_1(t),g h(g')]$ where $h \in H$ is the unique element such that $ p_1(t) \cdot h = p_2(t)$.  
In particular, parallel paths $p \colon [0,1] \to P $ over a loop in $L$ satisfy the property that there exists $ h \in H$ such that  $p(1) = p(0) \cdot h $. So that $m(1)=[p(1),g]= [p(0) \cdot h,g]= [p(0) ,h ( g)]$, so that parallel transportation along a loop is given as an action by an element of $H$. For a contractible loop, it has to be an inner automorphism of $G_0$, i.e.\ an element in $G_0$ acting by conjugation. This implies that $\mathbb H^\cG$ is a Yang-Mills connection. Now, assume we have two different connections $A_1$ by $ A_2$ on $P$, parallel paths $ p_1(t)$ and $ p_2(t)$ over the same path on $L$ and starting at the same point differ by a path in  $h(t) \in H$, i.e.\ $ p_1(t) \cdot h(t) = p_2(t)$. For connectedness reasons and by $h(0) = e$, $ h(t)$ is in fact valued in ${\mathrm{Inner}}( G_0) \simeq G_0$.  Hence parallel paths $ [p_1(t),g]$ and $ [p_2(t),g]$ in $ \mathcal G$ are related by: 
  $$  [p_2(t),g] =  [p_1(t) \cdot h(t), g] = [p_1(t), h(t) \cdot g \cdot h(t)^{-1}] =  [p_1(t), h(t)] \cdot [p_1(t),g] \cdot [p_1(t), h(t)]^{-1} $$
So they differ by an inner automorphism of $G_0$, i.e.\ by a field redefinition.  

We have to check that equivalent pairs $(\Xi_1,P_1) $ and $ (\Xi_2,P_2)$, related by some $ \Psi \in \mathrm{Aut}(G)$ and some diffeomorphism $ \chi$, yield canonically isomorphic group bundles $ \mathcal G_1$ and $ \mathcal G_2$, and Yang-Mills connections that differs by a field redefinition.
To start with, an explicit Lie group bundle isomorphism is given by:
 \begin{align*} \mathcal G_1 \coloneqq  \mleft( P_1 \times G \mright) \Big/ H_1   & \longrightarrow \mathcal G_2 \coloneqq \mleft( P_2 \times G \mright) \Big/ H_2 \\ [p,g]& \mapsto   [\chi(p), \Psi(g) ] \end{align*}
which is well defined since $ \Psi(H_1)=H_2$.
Now, for any connection $ A_1$ on $ \mathcal G_1$, we can equip $P_2$ with the image of that connection through $\chi$. If we do so, the diffeomorphism $ \mathcal G_1 \simeq  \mathcal G_2$  intertwine their respective parallel paths. In particular, it intertwines their Yang-Mills connection. 
This construct a map from the second item to the first one.

Last, we have to show that both maps are inverse one to the other, so that this correspondence is 1:1. Choose $(\mathcal G,[\mathbb H^\cG]) $ as in the first item. Fix a point $\ell \in L$, and, starting with $\cG$, we construct $P$ as described above. By construction an element $p\in P_{\ell_1}$ acts on $g \in G  \simeq \mathcal G_\ell$  to yield an element $p(g)\in\cG_{\ell_1}$. In particular, we have a canonical and well-defined isomorphism of Lie group bundles
\begin{align*}
    (P \times G ) \Big/ H &\to \cG ~,\\
    [p, g] &\mapsto p(g)~.
\end{align*}
As before this isomorphism intertwines the multiplicative Yang-Mills connections and their equivalence classes on $(P \times G)/H$ and $\cG$, such that the maps from the first to the second item in the corollary is injective because we have shown that the map from the second to the first item is a left inverse. 

We prove surjectivity of the map from the first item to the second item by showing that the map from the second to the first item is a right inverse. Start from a pair $(\Xi,P) $.
By definition,  $P$ is principal $H$-bundle, it therefore comes comes a principal $H $-bundle connection. Consider  $\cG \coloneqq (P \times G)/H$ equipped with an associated connection as multiplicative Yang-Mills connection. It suffices to prove that the Atiyah groupoid of $P$ coincides with the Yang-Mills groupoid of $ \mathcal G$: this implies that the principal $H$-bundle associated to the Yang-Mills connection is precisely $ P$, and completes the proof. 
Let us construct a groupoid  morphism from the Yang-Mills groupoid to the Atiyah groupoid.
On the one hand, the Atiyah groupoid $(P \times P)/H$ of $P$ acts canonically on $\cG$. On the other hand, elements ${\rm{PT}}_\gamma^\cG \circ [p,q]$ of $\mathrm{YM}(\cG, [\mathbb{H}^\cG])$ also act on $[p, g] \in \cG$, where we fix again a curve $\gamma$ from $\ell_0$ to $\ell_1$, and $[p, q] \in \cG_{\ell_0,0} \cong {\rm{Inner}}(\cG_{\ell_0,0})$. By construction and trivial center, ${\rm{PT}}_\gamma^\cG$ is 1:1 to the parallel transport ${\rm{PT}}_\gamma^P$ of an Ehresmann connection on $P$, that is, we have
\begin{equation*}
    \mleft({\rm{PT}}_\gamma^\cG \circ [p,q]\mright)([p, g])
    =
    \mleft[ {\rm{PT}}_\gamma^P(p) \cdot q, g \mright]~.
\end{equation*}
%
%
which motivates the smooth map $\xi$ given by
\begin{align*}
    \mathrm{YM}\mleft(\cG, [\mathbb{H}^\cG]\mright) &\to (P \times P)/H~,\\
    {\rm{PT}}_\gamma^\cG \circ [p,q] &\mapsto \mleft[  {\rm{PT}}_\gamma^P(p) \cdot q , p \mright]~.
\end{align*}
This is well-defined due to the mentioned 1:1 correspondence of parallel transports, and a different choice of $p' = p \cdot h \in P_{\ell_0}$ gives on the right hand side
\begin{equation*}
    \mleft[  {\rm{PT}}_\gamma^P(p \cdot h) \cdot h^{-1}(q) , p \cdot h \mright]
    =
    \mleft[  {\rm{PT}}_\gamma^P(p) \cdot \mleft( h \circ h^{-1}(q) \mright) , p \cdot h \mright]
    =
    \mleft[  {\rm{PT}}_\gamma^P(p) \cdot q , p \mright]~,
\end{equation*}
where the last equality is due to 
\begin{equation*}
    \mleft( h \circ h^{-1}(q) \mright)(g)
    =
    q h(g) q^{-1}
    =
    \mleft( q \circ h \mright)(g)
\end{equation*}
for all $g \in G$. Furthermore, for another curve $\gamma'$ from $\ell_1$ to $\ell_2$ and $q' \in G_0$ we have
\begin{equation*}
    \xi\mleft( {\rm{PT}}_{\gamma'}^\cG \circ \mleft[{\rm{PT}}_\gamma^P(p) \cdot q,q'\mright] \mright) \cdot \xi\mleft( {\rm{PT}}_\gamma^\cG \circ [p,q] \mright)
    =
    \mleft[ {\rm{PT}}_{\gamma' \star \gamma}^P(p) \cdot q \cdot q'  , p \mright]~,
\end{equation*}
while we also have
\begin{equation*}
    \mleft[{\rm{PT}}_\gamma^P(p) \cdot q,q'\mright]\circ{\rm{PT}}_{\gamma}^\cG
    =
    {\rm{PT}}_\gamma^\cG\mleft(\mleft[p \cdot q,q'\mright]\mright)\circ{\rm{PT}}_{\gamma}^\cG
    =
    {\rm{PT}}_{\gamma}^\cG\circ\mleft(\mleft[p \cdot q,q'\mright]\mright)~,
\end{equation*}
and thus, by $\mleft[p \cdot q,q'\mright] = \mleft[p,qq'q^{-1}\mright]$,
\begin{equation*}
    \xi \mleft( {\rm{PT}}_{\gamma'}^\cG \circ \mleft[{\rm{PT}}_\gamma^P(p) \cdot q,q'\mright] \circ {\rm{PT}}_\gamma^\cG \circ [p,q] \mright)
    =
    \xi\mleft( {\rm{PT}}_{\gamma'\star\gamma}^\cG \circ \mleft[p, qq'\mright] \mright)
    =
    \mleft[ {\rm{PT}}_{\gamma' \star \gamma}^P(p) \cdot q \cdot q'  , p \mright]~,
\end{equation*}
which proves that $\xi$ is indeed a morphism of Lie groupoids because source and targets are naturally intertwined. In fact, this is an isomorphism: $P$ is a principal $G_0$-bundle over the Galois bundle $\tilde L$, in particular $( P \times P)/H$ is an extension of the Atiyah groupoid of $\tilde L$, that is, a given class $[p', p]$ projects to some homotopy class of a curve (modulo the kernel of $\Xi$) which we choose to be $\gamma$. The projection of $P$ to $\tilde L$ is via the quotient of $G_0$, and as usual the parallel transport in $P$ over a loop at $\ell$ modulo $G_0 \cong H_0$ gives a group morphism $\eta \colon \pi_1(L,l) \to H/G_0 \cong \mathrm{Im}(\Xi)$, defined on the homotopy class of the corresponding loop. $\eta$ is surjective because $P$ is connected as an extension of $\tilde L$ by $G_0$, in particular there is a group isomorphism of $\pi_1(L, \ell)$ modulo the kernel of $\eta$ with $\pi_1(L, \ell)$ modulo the kernel of $\Xi$; in particular, under this isomorphism (and canonically extending by the kernels) $\eta$ is $\Xi$. We can now conclude that the projection of $\mleft[\mathrm{PT}^P_\gamma(p), p\mright]$ to the Atiyah groupoid of $\tilde L$ is given by the corresponding class of $\gamma$.

Then there is a unique $q \in G_0$ such that $p' = {\rm{PT}}_\gamma^P(p) \cdot q$, which proves surjectivity. Any other choice $\gamma'$ homotopic to $\gamma$ (modulo the kernel of $\Xi$) similarly comes with a unique $q' \in G_0$ such that $p' = {\rm{PT}}_{\gamma'}^P(p) \cdot q'$, but we also know that there is a $\hat q \in G_0$ with 
\begin{equation*}
    {\rm{PT}}_{\gamma'}^\cG = {\rm{PT}}_{\gamma}^\cG \circ \mleft[ p, \hat q \mright]~,
\end{equation*}
and thus $p' = {\rm{PT}}_{\gamma}^P(p) \cdot \hat q \cdot q'$ under $\xi$ which implies $\hat q q' = q$, proving injectivity by implying
\begin{equation*}
    {\rm{PT}}_{\gamma'}^\cG \circ \mleft[ p, q' \mright]
    =
    {\rm{PT}}_{\gamma}^\cG \circ \mleft[ p, q \mright]~.
\end{equation*}
Taking an arbitrary curve $\gamma'$ outside the homotopy class (modulo the kernel of $\Xi$) of $\gamma$ implies that ${\rm{PT}}_{\gamma'}^P(p)$ projects to a different class in $\tilde L$ and thus sits in a different fiber over $\tilde L$, which proves injectivity.

Since $ \xi$ is a Lie groupoid isomorphism, both  the Atiyah groupoid and the Yang-Mills groupoids have isomorphic isotropies and associated principal bundles. It implies that the map from the first item to the second item gives back the $H$-principal bundle $ P$. Also, the Atiyah groupoid of $P$ is an extension of the Atiyah groupoid of the Galois bundle $ \tilde L$, it means that the outer holonomy obtained in the map from the first item to the second item gives back $\tilde L $. It therefore have to match $ \Xi$ by construction: As we have shown earlier, over a loop $\gamma$ based at $\ell$, the quotient of ${\rm{PT}^P_\gamma(p)}$ by $G_0$ gives a canonical group morphism $\pi_1(L, \ell) \to H/G_0$ which one can identify as $\Xi$. There is a unique $h \in H$ such that ${\rm{PT}^P_\gamma(p)} = p \cdot h$, and by construction $\Xi([\gamma]) = \overline h$, where $[\gamma]$ is the homotopy class of $\gamma$, and $\overline h$ the equivalence class of $h$ in $\mathrm{Aut}(G)/G_0$. The outer holonomy induced by the map from the first to the second item is the quotient of ${\rm{PT}}^\cG_\gamma$ by $G_0$, but by construction
\begin{equation*}
    {\rm{PT}}^\cG_\gamma ([p, g])
    =
    \mleft[ \mathrm{PT}^P_\gamma(p), g \mright]
    =
    \mleft[p, h(g) \mright]
\end{equation*}
for all $[p, g]\in \cG$; thus, the quotient of ${\rm{PT}}^\cG_\gamma$ by $G_0$ maps $[\gamma]$ to $\overline h$, and henceforth aligns with $\Xi$. This finishes the proof.
\end{proof} 

\begin{example}
    If $G$ corresponds to a simple and centerless Lie group (by definition, one then has $G = G_0$), then $(\cG, [\mathbb{H}^\cG])$ is in 1:1 correspondence to principal $G$-bundles because $\mathrm{Out}(G)$ is trivial. Furthermore, there is just one equivalence class $[\mathbb{H}^\cG]$, because the difference of parallel transports of two Yang-Mills connections can only differ by inner automorphisms. Thus, in other words, if $\cG$ is a simple and centerless Lie group bundle admitting a multiplicative Yang-Mills connection, then there is just one multiplicative Yang-Mills connection up to field redefinition and a unique principal $G$-bundle up to principal bundle automorphisms.
\end{example}

}

\bibliography{Biblio}

\end{document}